\newtheorem{theorem}{Theorem}
\newtheorem{proposition}[theorem]{Proposition}
\newtheorem{lemma}[theorem]{Lemma}
\newtheorem{corollary}[theorem]{Corollary}
\theoremstyle{definition}
\newtheorem{definition}[theorem]{Definition}
\theoremstyle{remark}
\newcommand{\G}{\mathbf{{G}}}
\newcommand{\R}{\mathbf{R}}
\newcommand{\Q}{\mathbf{Q}}
\newcommand{\Z}{\mathbf{Z}}
\renewcommand{\L}{\mathbf{{L}}}
\newcommand{\Gal}{\mathscr{{G}}}
\newcommand{\Hgp}{\mathbf{{H}}}
\renewcommand{\H}{\mathbf{{H}}}
\newcommand{\g}{\mathfrak{g}}
\newcommand{\h}{\mathfrak{h}}
\renewcommand{\l}{\mathfrak{l}}
\newcommand{\w}{\mathfrak{w}}
\newcommand{\Rat}{\mathscr{R}}
\newcommand{\RatQ}{\mathscr{R}_\Q}
\newcommand{\Lscr}{\mathscr{R}_\Gamma}
\newcommand{\Supp}{\textbf{Supp}}
\newcommand{\Hom}{\textbf{Hom}}
\newcommand{\Ad}{\textbf{Ad}}
\newcommand{\Cji}{\text{Conj}^{-1}}
\newcommand{\Stab}{\text{Stab}}
\renewcommand{\det}{\text{det}}
\newcommand{\tens}{\otimes}
\newcommand{\lie}[1]{\mathfrak{#1}}
\newcommand{\ciso}{\simeq}
\newcommand{\leftexp}[2]{{\vphantom{#2}}^{#1}{#2}}
\newcommand{\Nm}[1]{{\left\|{#1}\right\|}}
\newcommand{\abs}[1]{{\left|{#1}\right|}}
\newcommand{\eps}{\varepsilon}
\newcommand{\Lpp}{L^{\scalebox{.5}{$\boldsymbol{++}$}}}
\title[Limit distributions for translations in arith. homog. spaces]{Limit distributions of translated pieces \\of possibly irrational leaves \\in~$S$-arithmetic homogeneous spaces}
\author{Rodolphe Richard, Thomas Zamojski}
\numberwithin{equation}{section}
\begin{document}

\begin{abstract}
The purpose of this article is to describe and characterize the limit distributions of translates of a  bounded open ``piece of orbit'' of a reductive subgroup on a space of $S$-arithmetic lattices. This is accomplished under a mild assumption of ``analytic stability'' on the sequence of translates. It is important to note however that it is not necessary to assume that the reductive subgroup or its centralizer are algebraic. Moreover, it is also not necessary to assume that the initial orbit is of finite measure or even closed. This article thus provides important generalizations on previously known results and opens ways to new applications in number theory, two of which are briefly mentioned. 
\end{abstract}

\maketitle
%\begin{abstract}
%This work fits in the context of “homogeneous dynamics” of “algebraic measures”. Namely the study of closed orbits, inside lattices spaces, and of homogeneous measures on it. We bring a general answer to questions about the processes at work when a sequence of translates of an homogeneous measure converge to some limit, continuing a tradition going back to Dani-Margulis work on Raghunathan’s conjecture (now a theorem of Ratner). In the work of Eskin, Mozes and Shah, the term “focusing" has been coined to describe the convergence of algebraic orbits to another one. 
%
%We are able to study the focusing of “pieces” of  closed orbits, instead of full orbits, and even pieces of not necessarily closed leaves of non necessarily
% algebraic Lie subgroups. We furthermore generalise previous theory to the S-adic setting. We can evade the main hypothesis of the cited authors under a technical assumption from an earlier work of Richard and Shah we rely upon. Our method is also expected to cover non arithmetic lattices. 
% 
% Thus relaxing several hypothesis needed in existing work on these questions, this opens the way to new results in a range of applications, such as: counting problem, diophantine approximation, equidistribution in arithmetic geometry.
%
%\end{abstract}
\setcounter{tocdepth}{2}
\setcounter{secnumdepth}{4}
\tableofcontents

%\subsection*{Summary}\phantom{.}
%
%Preface by an associated author ? 
%
%\paragraph{To do:}
%
%\paragraph{Credits}
%This work could not exist without earlier work of (notably) Dani, Eskin, Kleinbock, Margulis, Moz{e}s, Ratner, Shah, Tomanov.
%
%Present work stems from a collaboration visiting Pr Shah to at ICTP (Mumbai) in january 2005 (producing \cite{Lemma}), helped by the  \emph{\'{E}cole normale sup\'{e}rieure de Paris}. Tools were developped in the thesis~\cite{Lemma,Lemmap,LemmaA} at IRMAR (Rennes). These were deepened with a visit of Pr Shah at Irchel Universit\"{a}t (Z\"{u}rich) in june 2011 and OSU (Columbus OH) in september 2011. Through collaboration with T. Zamojski at \'{E}PFL (Lausanne) in the academic year 2011-2012, this gave the preprint~\cite{Lemmanew}, which updates~\cite{Lemma}, and the present developments. Writing occured at \'{E}PFL, Plouisy (France), ETHZ, and Mahina (Tahiti). The preview Section~\ref{SecShimura} about Shimura varieties is the development which was the implicit perspective of the thesis~\cite{These,These3}.
%
%\newpage

\section{Introduction}
Several problems in number theory involve understanding the limiting distribution of translates of certain orbits of subgroups in a space $G/\Gamma$ of $S$-arithmetic lattices. For example, \cite{DukRudSar93} suggests such an approach to the study of the density of integral points on affine homogeneous varieties  under a semisimple Lie group, an approach which was further pursued in \cite{EskMcm93,EMSAnn,GorMauOh08,GorOh09}. Another example pertains to Galois action on Hecke orbits in Shimura varieties and a conjecture of Pink~\cite{These3}. Set aside applications to number theory, the problem of classifying and characterising these limit measures is an interesting problem in homogeneous dynamics in its own right, leading to the development of new results, notably in~\cite{Lemmanew}.   

The setting for the problem is as follows: let $S$ be a finite set of places, and~$\Q_S$ the product of the corresponding completions of~$\Q$. Denote~$G$ the group of $\Q_S\text{-}$points of a semisimple $\Q$-algebraic group, $\Gamma$ an $S$-arithmetic lattice in $G$, and $H$ a connected reductive $\Q_S$-subgroup in $G$ (see~\S\ref{secintro}). In~\cite{EMSAnn}, it is assumed furthermore that $\Q_S=\R$, that the orbit $H\Gamma/\Gamma$ in $G/\Gamma$ supports a globally $H\text{-}$invariant probability measure $\mu_H$ and that $H$ is defined over $\Q$. One question is then to describe weak limits of sequences of translates $(g_n\cdot\mu_H)_{n\geq0}$, with~$g_n\in G$. Although an answer is obtained for any non-divergent sequences, every application to counting integral points on affine homogeneous varieties work under the more stringent assumption that $H'$, the centralizer of $H$ in $G$, be defined over $\Q$ and be $\Q$-anisotropic. This avoids the trivial issue of pushing all the mass to infinity using elements of $H'$. One wish is to encompass this phenomena in the ergodic method to counting and relax the hypothesis on~$H^\prime$.

The purpose of this article is to continue the study of the limit measures of translates of orbits of reductive subgroups in $G/\Gamma$, removing several of the assumptions of \cite{EMSAnn}. We continue a tradition dating back to Raghunathan’s conjecture, who was motivated by Oppenheim conjecture (now theorems of Ratner and Margulis respectively), and to the linearisation method initiated by Dani and Margulis for unipotent trajectories, pursued in work of Eskin, Mozes, Shah, together with the more recent property of $(C,\alpha)$-good functions, and its adaptation to the~$S$-adic setting by Kleinbock and Tomanov. 

Considering the two applications to number theory mentioned previously, cases arise when $H$ is not necessarily rational, $H'$ is not $\Q$-anisotropic and when $H\Gamma/\Gamma$ is not finite volume, or more drastically, not even closed.\footnote{For instance, if~$A$ is an abelian variety over a field of zero characteristic,~$S$ is made of a $\ell$-adic place and the archimedean place, and~$G$ is te Mumford-Tate group of~$A$. After maybe a finite extension of the base field, the image of the Galois representation on the~$\ell$-adic Tate module sits in~$G(\Q_\ell)$. This is a compact~$\ell$-adic Lie subgroup which is an open in an $\Q_\ell$-algebraic subgroup~$H$ of~$G$. If in the study~\cite{These3} we wish not to rely on Mumford-Tate conjecture (for a finitely generated base field), which asserts one should have~$H=G$, we are to consider general subgroups~$H$ defined over~$\Q_\ell$ and maybe not over~$\Q$.} It is then necessary to focus on a part of the orbit: let $\Omega$ be a Zariski dense open bounded subset of $H$ with zero measure boundary, and let $\mu_\Omega$ be the push forward under $G\to G/\Gamma$ of the restriction to $\Omega$ of an invariant measure on $H$. Under a mild assumption on the sequence $(g_n)_{n\geq0}$, we describe the weak limits of translates $g_n\cdot\mu_\Omega$ and we furthermore give a sufficient and necessary condition on the sequence to converge to this limit.  Following~\cite{EMSAnn},  we name the latter the~\emph{focusing criterion}. 

%The purpose of this article is to describe the weak limit measures of translates of a ``piece of orbit" in the ``$S$-arithmetic setting".  Furthermore, given such limit measure, we give a sufficient and necessary condition on the sequence to converge to this limit.  After~\cite{EMSAnn},  we name it the~\emph{focusing criterion}. 

\subsection{Two particular cases}
Before making our problem precise in section~\ref{secintro}, after having described at length our setting, we provide straight away two simplified cases of our main result~Theorem~\ref{Theorem}. We deem these case more useful to some readers than our most general statement.

\subsubsection{A particular case} This first simplified version contains little novelty with what was known to experts, but exhibits the kind of results we will present, while introducing the way our main theorem is formulated.
\addtocounter{theorem}{-1}
\begin{theorem} \label{Theointro} Let~$G$ be a real linear semisimple algebraic group defined over~$\Q$, and let~$\Gamma$ be an arithmetic lattice relative to the~$\Q$-structure on~$G$. Let~$H$ be a real algebraic connected subgroup of~$G$ defined over~$\Q$ which is reductive as an algebraic subgroup\footnote{The centre of~$H$ is semisimple in~$G$.}. Assume the centraliser~$H^\prime$ of~$H$ in~$G$ is $\Q$-anisotropic. 

Let~$\Omega$ be a non-empty open bounded subset of~$H$, let~$\mu|_\Omega$ be a probability on~$\Omega$ which is the restriction of a Haar measure of~$H$, and demote~$\mu_\Omega$ be the image probability measure on~$G/\Gamma$.

Fix any sequence~$\left(g_i\cdot\mu_\Omega\right)_{i\geq0}$ of translated probabilities on~$G/\Gamma$ and let $\mu_\infty$ be any weak limit measure. We have the following.
\begin{enumerate}
\item{\textbf{Tightness.}} The limit $\mu_\infty$ is a probability measure.
\item{\textbf{Limit probabilities.}} There are
\begin{itemize}
\item  a~$g_\infty$ in~$G$ and
\item an algebraic subgroup~$L$ of~$G$ defined over $\Q$, normalised by~$H$ and generated as a~$\Q$-algebraic group by its real unipotent elements, with analytic connected component of the identity~$\Lpp$,
\end{itemize}  such that
\begin{equation}\label{limitformulasimplified}
\mu_\infty= g_\infty\int_\Omega (\omega\cdot\mu_{\Lpp})~~d\mu(\omega),
\end{equation}
where $\mu_{\Lpp}$ is the~$\Lpp$-invariant probability measure on~$\Lpp\Gamma/\Gamma$.
%writes~$\mu_\infty$ as integral sum of translates of the~$L^+$-invariant probability~$\mu_{L^+}$ on~$L^+\Gamma/\Gamma$;
\item{\textbf{Focusing criterion.}} %If the sequence~$\left(g_i\cdot\mu_\Omega\right)_{i\geq0}$ converges to the probability~\eqref{limitformulasimplified}, for some~$L$ as above, then
The sequence~$\left(g_i\right)_{i\geq0}$ is of the class
\begin{equation}\label{focusingcriterionsimplified} g_\infty\cdot o(1)\cdot L\left(\Gamma\cap N(L)H^\prime\right),\end{equation}
where~$o(1)$ denotes the class of sequences in~$G$ converging to the identity, $N(L)$ is the normaliser of~$L$ in~$G$, and $H^\prime$ is the centraliser of~$H$.
\end{enumerate}
\end{theorem}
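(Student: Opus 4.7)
\emph{Overall strategy.} The plan is to combine three ingredients which together form the paradigm already visible in~\cite{EMSAnn}: non-divergence estimates for unipotent flows (Dani--Margulis, Kleinbock--Margulis, Kleinbock--Tomanov in the $S$-adic setting) to establish tightness; Ratner's measure classification theorem applied to a weak subsequential limit; and the linearisation method of Dani--Margulis, together with the Mozes--Shah compactness theorem for algebraic measures, to pin down the algebraic structure of the subgroup~$L$ and the element~$g_\infty$.

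\emph{Tightness (part~1).} Since $H$ is reductive with $\Q$-anisotropic centraliser~$H'$, the centre $Z(H) \subseteq H'$ is itself $\Q$-anisotropic, hence $Z(H)(\R)/(Z(H)(\R)\cap \Gamma)$ is compact and contributes no divergence. For the semisimple part of~$H$, I would cover~$\Omega$ by small pieces, each close to an orbit piece of a one-parameter unipotent subgroup of~$H$, and apply the $(C,\alpha)$-good function non-divergence estimate. Boundedness of~$\Omega$ gives uniformity in $\omega\in \Omega$, so every weak limit~$\mu_\infty$ is a probability measure.

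\emph{Limit measure and focusing criterion (parts~2 and~3).} Fix a weak subsequential limit $\mu_\infty$. Feeding the sequence~$(g_i \Omega \Gamma/\Gamma)$ into the Mozes--Shah machinery and iterating Dani--Margulis linearisation on the singular sets attached to all proper $\Q$-subgroups of $G$ containing unipotents, one extracts a~$\Q$-algebraic subgroup~$L \subseteq G$ normalised by~$H$ and generated by its real unipotent elements, and an element $g_\infty\in G$, such that~$\mu_\infty$ is the pushforward of~$\mu|_\Omega \otimes \mu_{\Lpp}$ under~$(\omega,x)\mapsto g_\infty \omega x \Gamma$. Disintegrating along $\omega \mapsto g_\infty \omega \Lpp \Gamma/\Gamma$ produces~\eqref{limitformulasimplified}. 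For the focusing criterion, I would compute the stabiliser of the measure~\eqref{limitformulasimplified}: elements of~$L$ are absorbed by $\Lpp$-invariance; elements of~$\Gamma \cap N(L)H'$ normalise~$\Lpp$ and commute with~$H$ up to an element of~$H'$, which preserves the $H$-average against $\mu|_\Omega$ since Haar measure on~$H$ is right $H'$-invariant. Conversely, continuity of the construction and the rigidity just described force any convergent sequence~$g_i \cdot \mu_\Omega \to \mu_\infty$ to lie in~$g_\infty \cdot o(1)\cdot L(\Gamma \cap N(L)H')$, which is~\eqref{focusingcriterionsimplified}.

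\emph{Main obstacle.} The principal difficulty is the~$\Q$-rationality of~$L$: Ratner's theorem alone only supplies an~$\R$-algebraic subgroup normalised by~$H$ and stabilising~$\mu_\infty$. Descending to~$\Q$ uses Mozes--Shah compactness combined with the arithmeticity of~$\Gamma$, since only $\Q$-rational subgroups can arise as stabilisers of limits of $\Gamma$-translates of algebraic measures. A secondary, more technical difficulty is uniformity of the linearisation argument in $\omega \in \Omega$, where the Zariski-density of~$\Omega$ in~$H$ is used to rule out escape into proper $H$-invariant subvarieties; the boundedness of~$\Omega$ and the compactness of the auxiliary Grassmannian-type varieties on which the linearisation is carried out take care of the rest.
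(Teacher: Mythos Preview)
Your outline follows the EMSAnn paradigm, which is broadly the right circle of ideas, but there are two concrete gaps and one structural difference from how the paper actually proceeds.

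\textbf{Tightness.} Your proposed argument covers~$\Omega$ by pieces close to orbits of one-parameter unipotent subgroups of~$H$. But~$H$ is only assumed reductive: it may be a torus, or more generally have no real unipotent elements at all, so there is nothing to cover by. The correct mechanism (the one behind~\cite{EMSGAFA} and its generalisation~\cite{LemmaA}) is not reduction to unipotent flows inside~$H$; it is the $(C,\alpha)$-good behaviour of the polynomial maps~$\omega\mapsto g\,\omega\cdot v$ on~$\Omega$ for every $\Q$-rational representation, together with the $\Q$-anisotropy of~$H'$, which prevents these maps from being uniformly small on nonzero integral vectors. The paper does not argue directly either: it observes (see~\S\ref{RemarkcasGAFA}) that under the hypothesis~\eqref{RnD} one may right-multiply each~$g_i$ by a suitable element of~$H'\cap\Gamma$ (which leaves~$g_i\cdot\mu_\Omega$ unchanged) so that the modified sequence satisfies the analytic stability hypothesis~\eqref{Hypo}; tightness is then a consequence of~\cite{LemmaA}, and the full strength of Theorem~\ref{Theorem} becomes available.

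\textbf{Focusing criterion.} Your computation of the stabiliser of~\eqref{limitformulasimplified} asserts that elements of~$H'$ ``preserve the $H$-average against~$\mu|_\Omega$ since Haar measure on~$H$ is right $H'$-invariant''. This is not well-posed: $H'$ is the centraliser of~$H$ in~$G$, not a subgroup of~$H$, and right multiplication by~$h'\in H'$ does not send~$\Omega$ into~$H$ in general. What is true is that~$h'\omega=\omega h'$, so left translation by~$h'$ on~$G/\Gamma$ moves the base point of each fibre~$\omega\cdot\mu_{\Lpp}$ to~$\omega h'\cdot\mu_{\Lpp}$; this is why only~$h'\in \Gamma\cap N(L)H'$ (and, more precisely after the reduction, $h'\in H'\cap F$ modulo~$\Gamma_N$) appears in the criterion, not arbitrary~$h'\in H'$. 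The paper obtains the focusing criterion not by a stabiliser computation but by passing to the subquotient~$\widehat{G}=HF/M$ (\S\ref{induction}) and running a contradiction argument: if the translators were unbounded modulo~$(H'\cap F)\cdot L^H$, one manufactures a nontrivial unipotent element fixing~$\mu_\infty$ outside~$W$, contradicting the maximality of~$W$.

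\textbf{Structural comparison.} You treat Theorem~\ref{Theointro} as a standalone result in the EMSAnn style. The paper instead deduces it from the general Theorem~\ref{Theorem}: the assumption that~$H'$ is $\Q$-anisotropic is used only to force~\eqref{Hypo} (after the $\Gamma\cap H'$ modification above), and then the key new inputs from~\cite{Lemmanew} (linearised non-divergence and linearised focusing, Theorems~1 and~2 there) drive the argument. In particular the conclusion that~$L$ is normalised by~$H$, which you flag as a difficulty, is \emph{not} obtained from Mozes--Shah compactness; in the general theorem~$L$ need not be normalised by~$H$ at all, and one works with the intersection~$L^H=\bigcap_{h\in H} hLh^{-1}$. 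It is only because~$H$ is defined over~$\Q$ here that one may replace~$L$ by the $\Q$-Zariski closure of~$(L^H)^+$, which is then $H$-normalised (cf.~\S\ref{seccontext} and~\cite{EMSCorr}).
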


%The first conclusion follows from earlier work (\cite{EMSGAFA}, \cite{Lemma,LemmaA}). The second and third conclusions are proved together, and are an instance of Theorem~\ref{Theorem} in the simplified case above.
Theorem~\ref{Theointro} is a slight generalisation of~\cite[Statement 1.13]{EMSAnn}. Moreover, our main theorem will generalise further this statement in several aspects (see~\S\ref{seccontext}). Most importantly, $H$ will no longer be required to be defined over $\Q$, nor to have a closed orbit $H\Gamma/\Gamma$. We will however need to restrict to sequences having a geometric stability property (see \S\ref{analytic stability}).
Here this hypothesis was evaded thanks to~\cite{Lemma}, which is related to~\cite{EMSGAFA} (see~{\S}\ref{RemarkcasGAFA}).
Nonetheless, this restriction is flexible enough to encompass all applications of~\cite{EMSAnn} to counting integral points on varieties, and even more.

\subsubsection{Another particular case}
Our theorem also opens the way to applications to Galois actions on Hecke orbits in Shimura varieties. Measures on bounded pieces of orbits of irrational groups $H$ arise in this context. To illustrate this, we give the following version of our theorem, requiring much of the power of our main Theorem~\ref{Theorem} (and also Theorem~\ref{thm21bis}). For simplicity we consider here only one ultrametric place. The reader might adapt at wish to finitely many ultrametric places.

\begin{theorem}\label{Theointro2} Let~$G$ be a semi-simple linear algebraic group over~$\Q$. Fix a prime~$p$ and a compact subgroup~$K$ of~$G(\Q_p)$ Let~$\Gamma$ be an~$S$-arithmetic lattice\footnote{With respect to the  couple~$S:=\left\{\Q\to\R;\Q\to\Q_p\right\}$ of places. See~\cite[{\S}(3.1.2)]{Margulis} or~{\S}\ref{secmuomega} for the definition of a $S$-arithmetic lattice we use.} of~$G(\R)\times G(\Q_p)$. Consider the Haar probability measure~$\mu$ on~$K$ and define~$\mu_K$ to be its direct image in the quotient space~$\left.G(\R)\times G(\Q_p)\middle/\Gamma\right.$.

Assume the following concerning~$K$:
\begin{enumerate}
\item[(H1.)] \label{H1intro} that the centraliser of~$K$ in~$G(\Q_p)$ is nothing more than the centre of~$G(\Q_p$);
\item[(H2.)] \label{H2intro} that~$K$ is reductive in~$G(\Q_p)$ (equivalently: its Lie algebra is semisimple);
\item[(H3.)] \label{H3intro} that the Zariski closure of~$K$ over~$\Q_p$ is Zariski connected.
\end{enumerate}

Then the following conclusions hold.
\begin{enumerate}
\item The familly of translated probabilities
\begin{equation}\left(g\cdot \mu_K\right)_{g\in G(\Q_p)}\end{equation}
 is tight.
\item The limit points of the family~$\left(g\cdot \mu_K\right)_{g\in G(\Q_p)}$ are all translate of one of the measures of the form
\begin{equation}\label{theointropadiclimitformula}
\mu_{K\star L} = \int_{\kappa\in K} \kappa\cdot \mu_{\Lpp}~d\mu(\kappa).
\end{equation}
\begin{itemize}
\item where~$L$ is a subgroup of~$G$ defined over~$\Q$ without~$\R\times\Q_p$-anisotropic $\Q$-rational factor (some say “of non compact type”), generated over~$\Q$ by its unipotent elements defined over~$\R\times\Q_p$,  (of \emph{Ratner class} in~{\S}\ref{secnotations})
\item where~$\Lpp$ is an explicit (defined in~{\S}\ref{secnotations}) open subgroup of finite index in~$L(\R\times\Q_p)$,
\item and where~$\mu_{\Lpp}$ is the~$\Lpp$-invariant probability on~$\Lpp\Gamma/\Gamma$.
\end{itemize}
\item The measure~$\eqref{theointropadiclimitformula}$ can occur as a limit point for a subgroup~$L$ distinct from~$G$ and~$\{e\}$ if and only if there exists an unbounded algebraic subgroup of~$G(\Q_p)$ which is normalised by~$K$ and not Zariski dense over~$\Q$ in~$G$.
\item Assume that~$(g_i\cdot \mu_\rho)$ converges to a translate of~$\mu_{K\star G}$ for every sequence~$(g_i)_{i\geq 0}$ in~$G(\Q_p)$ without bounded infinite subsequence. Then the Zariski closure over~$\Q$ of~$K$ in~$G$ contains the~$\Q_p$-isotropic factors of~$G$.
\end{enumerate}
\end{theorem}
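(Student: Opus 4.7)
The plan is to deduce the four conclusions from Theorem~\ref{Theorem} together with the analytic stability criterion Theorem~\ref{thm21bis}, applied with reductive subgroup $H := \{e_\infty\}\times K^{0}\subset G(\R)\times G(\Q_p)$ (where $K^{0}$ is the identity component of $K$, a finite-index subgroup) and bounded open piece $\Omega := K^{0}$. Hypothesis (H2) makes $H$ reductive, (H3) ensures the Zariski closure over $\Q_p$ is Zariski-connected, and (H1) reduces the centraliser $H^\prime$ in $G(\Q_p)$ to the centre of $G$, removing the usual mass-escape mechanism through commuting elements. The extension from $K^{0}$ to $K$ is handled by decomposing $\mu_K$ as a finite convex combination of translates of $\mu_{K^{0}}$ under coset representatives of $K/K^{0}$.

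Items (1) and (2) then come from a direct application of Theorem~\ref{Theorem}. The analytic stability required by Theorem~\ref{thm21bis} is automatic for any sequence $(g_n)\subset G(\Q_p)$: compactness of $K$ obstructs escape inside $H$, while (H1) kills escape along the centraliser. Extracting subsequential limits yields the formula \eqref{limitformulasimplified}, which in this setting reads exactly as \eqref{theointropadiclimitformula}, with $L$ a Ratner-class $\Q$-subgroup normalised by $K$; the piece-of-orbit integral over $\Omega$ becomes the integral over $K$. Tightness of the full family $(g\cdot\mu_K)_{g\in G(\Q_p)}$ follows because Theorem~\ref{Theorem} guarantees every subsequential weak limit is a probability measure.

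Item (3) is the main algebro-dynamical step. For the forward direction, a limit of the form $\mu_{K\star L}$ with $L\neq G,\{e\}$ produces a proper, nontrivial, Ratner-class $\Q$-subgroup normalised by $K$. The focusing criterion \eqref{focusingcriterionsimplified}, combined with (H1) which collapses $H^\prime$ to the centre of $G$, forces the diverging part of a witnessing sequence $(g_n)\subset G(\Q_p)$ to lie in $L(\Q_p)$; hence $L(\Q_p)$ is an unbounded $\Q_p$-algebraic subgroup of $G(\Q_p)$, normalised by $K$ and (being contained in $L\neq G$) not $\Q$-Zariski dense in $G$. Conversely, given such an unbounded $M\subset G(\Q_p)$, one takes for $L$ the $\Q$-algebraic hull of the subgroup generated by the unipotent elements of $M$ together with their $K$-conjugates---this is Ratner-class and normalised by $K$---and constructs diverging $g_n\in M(\Q_p)$ satisfying the focusing criterion, so that Theorem~\ref{Theorem} identifies the limit as $\mu_{K\star L}$.

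Item (4) is then the contrapositive of (3): if some $\Q_p$-isotropic $\Q$-simple factor $N$ of $G$ is not contained in the $\Q$-Zariski closure of $K$, then $N$ is normalised by $K$ (the simple factors of a semisimple $\Q$-group commute pairwise and are characteristic), $N(\Q_p)$ is unbounded, and $N$ is not $\Q$-Zariski dense in $G$; part (3) then produces a divergent sequence in $N(\Q_p)$ whose translates converge to $\mu_{K\star N}$, which is not a translate of $\mu_{K\star G}$, contradicting the hypothesis of (4). The main obstacle will be the converse direction of (3), where one must construct divergent sequences realising $\mu_{K\star L}$ as an actual limit; this uses the focusing criterion in a subtle way and is precisely where Theorem~\ref{thm21bis} goes beyond a naive application of Theorem~\ref{Theorem}.
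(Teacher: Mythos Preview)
Your overall plan matches the paper's: point (1) is the tightness result from~\cite{LemmaA} (obtained via analytic stability, which holds here because (H1) reduces $H'\cap G(\Q_p)$ to the centre), point (2) is the limit formula of Theorem~\ref{Theorem}, point (3) comes from the focusing criterion, and (4) follows from (3). A few things need correction.

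First, your passage to ``$K^{0}$, the identity component of $K$, a finite-index subgroup'' is wrong: $G(\Q_p)$ is totally disconnected, so the topological identity component of $K$ is $\{e\}$. Hypothesis (H3) already provides what you need. Take $H=\{e_\infty\}\times \mathbf{H}_p(\Q_p)$ where $\mathbf{H}_p$ is the $\Q_p$-Zariski closure of $K$; this fits the framework~\eqref{H reductif dans}, and $K$ itself is open and bounded in $H$, so $\Omega=K$ works with no finite-index gymnastics.

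Second, in the forward direction of (3) you assert that the $L$ produced by Theorem~\ref{Theorem} is normalised by $K$. This is exactly what one cannot expect when $H$ is not $\Q$-rational (see the discussion after the theorem and~\S\ref{seccontext}). The subgroup you should exhibit is the $\Q_p$-component of $L^H=\bigcap_{h\in H}hLh^{-1}$: this is $\Q_p$-algebraic, normalised by $H$ (hence by $K$), unbounded because the diverging $(g_i)$ lands in $O(1)\cdot(H'\cap F)\cdot L^H$ by~\eqref{focusingcriterion} and $H'\cap G(\Q_p)$ is finite by (H1), and not $\Q$-Zariski dense since it is contained in the proper $\Q$-group $L$.

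Third, the converse of (3) is where your sketch has a real gap. Your recipe ``take for $L$ the $\Q$-hull of the unipotents of $M$ together with their $K$-conjugates'' yields $L=\{e\}$ when $M$ is, say, a $\Q_p$-split torus, and then your focusing argument collapses. The point is that one does not construct $L$ in advance: pick any diverging sequence $(g_i)$ in $M$, apply Theorem~\ref{Theorem}, and let $L'$ be the Ratner-class group it produces. Divergence and (H1) force $L'\neq\{e\}$ via the focusing criterion. Showing $L'\neq G$ requires a support argument (the supports of $g_i\mu_K$ stay inside $K\cdot \tilde M(\Q_S)\Gamma/\Gamma$ for $\tilde M$ the proper $\Q$-Zariski closure of $M$, which is closed and proper in $G/\Gamma$), and this step deserves more care than either your sketch or the paper's one-line indication provides.

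Your derivation of (4) from (3) is the right idea but inherits the gap above, and your phrasing breaks when $G$ is $\Q$-simple (then the only ``factor'' $N$ is $G$ itself, which is $\Q$-Zariski dense). The correct contrapositive goes through (3) directly: one must produce the unbounded $K$-normalised $\Q_p$-subgroup from the failure of $\bar K_\Q$ to contain a $\Q_p$-isotropic factor.
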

The first point is a particular case of~\cite{LemmaA}. The second point is a particular case of the limit formula of our result Theorem~\ref{Theorem}.  The third point is derived from the focusing criterion of Theorem~\ref{Theorem}, and the last point is a consequence of the third.
These last two points serve as a criterion for
``indiscriminate'' equidistribution, for all reasonable sequences: there is no focusing phenomenon.

\subsubsection{}
Both applications discussed above shall be the subject of future work.
 
The proof follows the strategy of~\cite{EMSAnn}, thus relies on Ratner's measure classification theorem and on Dani and Margulis' linearisation method. However, a significant difference in our proof is that the geometric stability properties from~\cite{Lemmanew} provides a major simplification to the linearisation method (see~{\S}\ref{subsection-proof-structure}). It is responsible for the strong conclusions obtained in our main Theorem~\ref{Theorem}.

%Our proof follows the strategy of~\cite{EMSAnn} but is a new in certain key technical aspects of its realisation. In particular, our method, which can be thought as more "orthodox", gives interesting and detailed information on the way the convergence occurs, from the~$g_i\cdot\mu_\Omega$ to~$\mu_\infty$.

%\subsection*{Structure of the article} Section~\ref{secintro} introduces the setting for the problem, subgroups relevant to the proof and the stability assumption mentioned above. Section~\ref{secthm} consists of the commented statement of the result, whereas its proof lies in Section~\ref{secproof}. Appendices... 

\subsection{Acknowledgements}
This work could not exist without earlier work of (notably) Dani, Eskin, Kleinbock, Margulis, Moz{e}s, Ratner, Shah, Tomanov.

Present work stems from a collaboration of the first named author visiting Pr~Shah at ICTP (Mumbai) in January 2005 (producing \cite{Lemma}), helped by the  \emph{\'{E}cole normale sup\'{e}rieure de Paris}. Tools were developed in the thesis~\cite{Lemma,Lemmap,LemmaA} at IRMAR (Rennes). These were deepened with a visit of Pr Shah at Irchel Universit\"{a}t (Z\"{u}rich) in June 2011 and to him at OSU (Columbus OH) in september 2011. At \'{E}PFL (Lausanne) in the academic year 2011-2012, this gave the preprint~\cite{Lemmanew}, which updates~\cite{Lemma} with the cornerstone result
allowing the present developments, a collaboration with T. Zamojski. Writing occurred at \'{E}PFL, Plouisy (France), ETHZ, and Mahina (Tahiti), 
and were lately supported by Universiteit Leiden.% The preview Section~\ref{SecShimura} about Shimura varieties is the development which was the implicit perspective of the thesis~\cite{These,These3}.

The second named author was supported by the European Research Council Grant 228304.

Both authors first met at a summer school in Pisa organised by the Clay institute about homogeneous dynamics.

%\section{Local Focusing for transcendent algebraic measures}

\section{Setting} \label{secintro}
Our result, Theorem~\ref{Theorem}, is set up in the $S$-arithmetic setting, which we introduce in~\ref{Ssetting} below.
This Theorem involves translates of quite general probability measures~\(\mu_\Omega\), the alluded “pieces of orbits”, which we describe in~\ref{secmuomega}. For full generality, we rely on a technical hypothesis detailed in~\ref{analytic stability}. We then emphasize a particular case
where this hypothesis is void, a particular case which encompass the works of ~\cite{EMSAnn, EMSGAFA}. Lastly we review some of Ratner theory and introduce some notations, especially the measures~$\mu_{\Lpp}$ our limit measures will be made of.
\subsection{The~$S$-arithmetic setting} \label{Ssetting}
 This setting is analogous to \cite{Borel-Prasad}, \cite{MargulisTomanov}, \cite{TomanovOrbits}, \cite{KT}, \cite{LemmaA}.

Let~$S$ be a finite set of places~$v:\Q\to\Q_v$. We consider the  notation
\begin{equation}\label{notationQS}
\Q_S:=\prod_{v\in S}\Q_v.
\end{equation}
Let~$\G$ be a semisimple algebraic group over~$\Q$. Let~$G$ be the topological group~$\G(\Q_S)$, which we will identify with~$\prod_{v\in S}\G(\Q_v)$. The \emph{Zariski topology on~$G$}, and its subsets, will mean the product topology of the Zariski topologies induced on the~$\G(\Q_v)$.

We consider a subgroup~$H$ of~$G$ such that
\begin{equation}
\label{H reductif dans}
\text{``$H$ is a connected~\emph{$\Q_S$-subgroup} which is \emph{reductive in~$G$}''.}
\end{equation}
 Namely~$H$ can be written~$\prod_{v\in S}H_v$ where
\begin{itemize}
\item if~$\Q_v\ciso\R$ is archimedean,~$H_v$ is a connected real Lie subgroup of~$\G(\R)$ whose Lie algebra has semi-simple adjoint action on the Lie algebra of~$\G(\R)$,
\item if~$\Q_v$ is ultrametric,~$H_v$ is the group~$\Hgp_v(\Q_v)$ of $\Q_v$-rational points of a Zariski connected reductive algebraic subgroup~$\Hgp_v$ of~$\G$ defined over~$\Q_v$.
\end{itemize}
%TODO
%A \emph{norm}~$\Nm{-}$ on a~$\Q_S$ module (...)

\subsection{Sequences of probabilities}\label{secmuomega} Given this setting, we consider a non-empty open bounded subset~$\Omega$ of~$H$, which is necessarily~$\Q_S$-Zariski dense in~$H$.
%; if not, we may replace~$H$ by the~$\Q_S$-Zariski closure of~$\Omega$ in~$H$. %The density of~$\Omega$ always holds if~$H$ is made of a connected real Lie group and Zariski connected algebraic groups~$\Hgp_v$.

From assumption~\eqref{H reductif dans}, we note that~$H$ is a unimodular group (\cite[p.15, Exemple de modules a)]{vigneras1996representations}, \cite[Corollary 8.31 (d)]{Knapp}); its Haar measures are both left and right Haar measures. Let~$\mu$ be the Haar measure on~$H$ normalised so that~$\mu(\Omega)=1$. Fix a lattice~$\Gamma$ of~$G$. Denote~$\mu_\Omega$ the image probability measure on~$G/\Gamma$ of the restriction of $\mu$ to $\Omega$. We will assume here for simplicity that~$\Gamma$ is an $S$-arithmetic lattice, in the sense of~\cite[{\S}(3.1.2)]{Margulis} (compare~\cite{TomanovOrbits}, confer~\cite[\S\,1,\S\,6]{Borelihes}).

%%%%%%% Variant for mu de f au lieu de mu omega %%%%%%%%%%

%\paragraph{A variant} Following the setting of~\cite[Theorem~1.3]{LemmaA}, we could have replaced~$\mu_\Omega$ with the following generalisation. Take~$f$ a bounded positive~$\mu$-integrable function on~$H$. For example the characterisitc function of~$\Omega$. Define~$\mu_f$ to be the direct image of~$f\cdot\mu$ in~$G/\Gamma$. We replace~$\mu_\Omega$ with~$\mu_f$ provided the latter is non zero.

We are interested in the asymptotic behaviour of a sequence~$\left(g_i\cdot\mu_\Omega\right)_{i\geq0}$ of probabilities on~$G/\Gamma$ made of translates of~$\mu_\Omega$ by elements~$g_i$ in~$G$. We ask:
\begin{enumerate}
\item   what are the limit measures (in particular are they also probabilities);
\item   how to characterise conveniently the sequences converging to a given limit measure.
\end{enumerate}
We will completely answer these questions provided the translating elements~\(g_i\) are constrained to any subset~\(Y\) of~$G$ satisfying the following stability property.

\subsubsection{Variation and Full orbits.} Let us mention a possible variation to our setting. As in~\cite{LemmaA}, we could consider more general measures of the form~$\mu_f=f\cdot\mu_\Omega$ where~$f\geq0$ is of class~$L^1(\mu_\Omega)$ and such that~$\int f\cdot\mu_\Omega=1$. We will not
delve here into such a generalisation. It could be useful nonetheless, for instance in order to rigorously relate our results on the measure~$\mu_\Omega$ on a piece of orbit, and analogous statements about probability measure on a full closed orbit (assuming there is such, which may not be the case in our general setting). We refer to~\cite{LemmaA} for such considerations. (See also~\cite{Lemma} for a variant.) A nice treatment would also study closed orbits
of maybe infinite homogeneous measures; such study is for a later work.

\subsubsection{On general Lattices} There is a more general notion of~$S$-arithmetic lattices (see~\cite{TitsMargulis}), and semi-simple groups can feature interesting non arithmetic lattice.
We limit ourselves to a restricted notion of lattice for commodity: we rely technically on quite a number of references. This will might hamper applications
to Shimura varieties, which maybe associated with general arithmetic lattices (cf~\cite{These3}.) We expect nevertheless our 
method to adapt for general lattices without serious trouble. It deals mostly with Dani-Marulis’ linearisation, which has already been applied
to more general lattices, and rely on Ratner’s theorems and its variants in comparable generalities. Nonetheless, the exigency of rigour led us
at technical points to require the use of explicit structure properties of the groups~$L$ of Ratner class (see~Appendix~\ref{AppRatner}). 

%We will consider a weakly converging sequence~$\left(g_i\cdot\mu_\Omega\right)_{i\geq0}$ with limit~$\mu_\infty$ and assume that~$\mu_\infty$ is non zero\footnote{The non vanishing of~$\mu_\infty$ will actually follow from~\cite{LemmaA} under the hypothesis~\eqref{Hypo} that will be introduced shortly.}.
%Our main result Theorem~\ref{Theorem} p.\pageref{Theorem} identifies the possible forms of the limit~$\mu_\infty$, and its Corollary~\ref{Corollary} describes the sequences converging to a given non zero limit. %Assuming the above-mentioned hypothesis.
%

%We will use the linearisation techniques~\cite[{\S}2]{DM} of Dani-Margulis, following the method of~\cite[{\S}3]{EMSGAFA} of Eskin-Mozes-Shah, or rather its $S$-arithmetic extensions appendix~\ref{AppA} is devoted to. We will also use the classification theorem~\cite{Rat} of Ratner~\cite[?]{Ratnerp}, and more precisely the~$S$-arithmetic version as in~\cite[Theorem~2]{MargulisTomanov}, together with refinements from~\cite{TomanovOrbits}. We refer to appendix~\ref{AppRatner} for precisions. 

\subsection{\emph{Analytic stability} hypothesis}\label{analytic stability}
% We will also need the geometric stability lemmas~\cite[Theorem~1, Theorem~2]{Lemma,Lemmanew} of Richard-Shah and~\cite{Lemmap} of Richard.
%This leads to the following technical hypothesis.

The \emph{analytic stability} property on a subset~$Y$ of~$G$ of \cite{Lemma,Lemmanew,Lemmap} is defined as follows:
\begin{equation}\tag{An.S.}\label{AnS}
\begin{array}{l}
\text{For any $\Q_S$-linear representation~$\rho:G\to GL(V)$, for any norm~$\Nm{-}$ on~$V$,}\\
\exists c>0,~\forall y\in Y,~\forall v\in V,~\sup_{\omega\in\Omega}\Nm{y \cdot \omega\cdot v }\geq \Nm{v}/c.
\end{array}
\end{equation}
 Note that the property can be made independent of~$\Omega$, as long as~$\Omega$ is bounded and Zariski dense in~$H$.
The hypothesis under which our proof works is that
\begin{equation}\tag{H}\label{Hypo}
 \text{ the set~$Y:=\{g_i~|~i\geq0\}$ satisfies~\eqref{AnS}.}
\end{equation}

We explain why such hypothesis is reasonable. On the one hand, it implies that the sequence~$\left(g_i\cdot \mu_\Omega\right)_{i\geq0}$ is tight (\cite[Theorem~1.3]{LemmaA}). One the other hand, this hypothesis is not far from being equivalent to tightness. More precisely, thanks to Mahler's criterion and \cite{KT}, one can closely relate tightness of the sequence~$\left(g_i\cdot \mu_\Omega\right)_{i\geq0}$ to the following~\emph{(uniform) Arithmetic (semi)stability} statement.
\begin{equation}\tag{Ar.S.}\label{ArS}
\scalebox{0.92}{
$\begin{array}{l}
\text{For any $\Q$-linear representation~$\rho:G\to GL(V)$, for any norm~$\Nm{-}$ on~$V\tens\Q_S$,}\\
\text{for any $\Q$-rational $S$-arithmetic lattice~$\Lambda$ in~$V\tens\Q_S$,}\\
\exists C>0,~\forall y\in Y,~\forall v\in \Lambda\smallsetminus\{0\}, \max_{\omega\in\Omega}\Nm{y \cdot \omega\cdot v }\geq C.
\end{array}
$}
\end{equation}
One deduce~\eqref{ArS} from~\eqref{AnS} as follows. We note that~$\Lambda$ is discrete in~$V\tens\Q_S$. Thus its systole~$\inf \{\Nm{\lambda}~|~\lambda\in\Lambda\smallsetminus{0}\}$ is bounded below. Let~$\sigma>0$ be a bound. Then~\eqref{AnS} for~$V\tens\Q_S$ implies~\eqref{ArS} with~$C=\sigma/c$.

Last but not least, references~\cite{Lemma,Lemmap,LemmaA,Lemmanew} show that hypothesis~\eqref{AnS} can be achieved for explicit and quite large subsets~$Y$ (see~Theorem~\ref{thm21bis}).

Came lately to the notice of the authors the work of Grayson~\cite{Grayson1,Grayson2} where he produces related subsets of a symmetric space~$G/K$
defined by a related (more precise) property of \emph{arithmetic stability} (coined by Stuhler~\cite{Stuhler}), in relation to reduction theory for arithmetic groups.

\subsection{An important special case}\label{RemarkcasGAFA}
One can avoid the extra hypothesis~\eqref{Hypo} in the following special case which is important considering the applications\footnote{Notably~\cite{EMSAnn}, and its wanted generalisations; but also Galois actions on Hecke orbits in Shimura varieties, see~\cite{These3}.}.
\begin{equation}\tag{R.nD.}\label{RnD}
\text{The centraliser~$H^\prime$ of~$H$ in~$G$ is defined over~$\Q$ and is~$\Q$-anisotropic.}
\end{equation}
Here, by~``$\Q$-anisotropic" we mean satisfying the Godement compacity criterion~\cite[Théorème~8.7\footnote{Note that here,~$H^\prime$ is necessarily reductive, and has trivial unipotent radical.}]{BorelIntro}. On top of the rationality property for~$H^\prime$, the anisotropy translates in 
a non divergence property for sequences of translates~$g_i\cdot \mu_\Omega$ (see~\cite{LemmaA} following~\cite{EMSGAFA}).

In the case~\eqref{RnD}, the hypothesis~\eqref{Hypo} can always be achieved by modifying the sequence of translators~$\left(g_i\right)_{i\geq0}$ by right multiplying by elements of~$H^\prime\cap\Gamma$, such modification leaving the sequence of probabilities~$\left(g_i\cdot\mu_\Omega\right)_{i\geq0}$ unchanged. 

This special case is enough to encompass the setting needed to apply~\cite{EMSGAFA} (and its~$S$-arithmetic generalisation~\cite{LemmaA}.)

The hypothesis~\eqref{RnD} is notably satisfied in the case where
\begin{equation}\label{centraliserless}
\text{$H^\prime$ is the centre of~$G$.}
\end{equation}

\subsection{Notations from Ratner theory}\label{secnotations}
%In order to state our result, we introduce some of the notations we will use later using Ratner's theory. 

A one parameter algebraic unipotent subgroup of~$G$ will mean a subgroup of the form~$U(\Q_S)$ where~$U$ is an algebraic subgroup of~$\G$ defined over~$\Q_S$ of dimension\footnote{The dimension for~$\Q_S$-algebraic groups is naturally a function on~$\mathbf{Spec}(\Q_S)$. The latter can be identified with~$S$. This is a constant function for an algebraic group which comes from~$\Q$. The group~$H$ is not assumed of constant dimension, as well as groups depending on it. It may furthermore contain an non-algebraic factor at the real place, at which place one considers the real analytic dimension.} at most one at every place in~$S$. 

For a subgroup~$L$ of~$G$ (resp. an algebraic subgroup~$L$ of~$\G$ defined over~$\Q$), we denote~$L^+$ the subgroup of~$L$ (resp. of~$L(\Q_S)$) generated by its one parameter algebraic unipotent subgroups defined over~$\Q_S$. (cf.~\cite[{\S}1.5 and Theorem~2.3.1]{Margulis}, \cite[{\S}6]{BorelTits} for interesting properties of such subgroups. See also~\cite{GilleKneserTits} for more actual questions. We recall some properties in Appendix~\ref{AppRatner}.)

We call the \emph{Ratner class}, the set~$\Rat_\Q$ of 
\begin{equation}\label{defRatclass}\text{algebraic subgroups~$L$ of~$\G$ over~$\Q$ such that~$L^+$ is $\Q$-Zariski dense in~$L$.}\end{equation}
This is the class~$\mathscr{F}$ of~\cite{TomanovOrbits}. Namely the property~\eqref{defRatclass} is equivalent to~$L$ being an algebraic subgroup of~$G$ over~$\Q$ such that any nontrivial quotient~$L/P$ defined over~$\Q$ contains a nontrivial unipotent element. Such groups are such that their reductive Levi factors are actually semisimple, and are of non compact type. They satisfy the Borel-Wang density Theorem:~$\Gamma\cap L$ is~$\Q_S$-Zariski dense in~$L$ (and a lattice).

We define a \emph{Ratner type} as a $\Gamma$-conjugacy class of groups of Ratner class. We denote~$[L]$
the Ratner type to which belongs a group~$L$ of Ratner class.

  For any~$L$ in~$\Rat_\Q$,
\begin{itemize}
\item we use the nonstandard notation~$\Lpp$ for the topological closure
\begin{equation}\Lpp=\overline{L^+\cdot(\Gamma\cap L(\Q_S))},\end{equation}
which is a subgroup of finite index in~$L(\Q_S)$ (see~Proposition~\ref{++finiteindex}) 
\item and let~$\mu_{\Lpp}$ be the~$\Lpp$-invariant probability measure on~$G/\Gamma$ supported over~$\Lpp\Gamma/\Gamma$.
\end{itemize}
Note that, though~$L^{+}$ does not depend on~$\Gamma$, the group~$\Lpp$ does. The probability~$\mu_{\Lpp}$ exists and is unique provided~$L$ belongs to~$\Rat_\Q$. The support~$\Lpp\Gamma/\Gamma$ of~$\mu_{\Lpp}$ is also the orbit closure~$\overline{L^{+}\Gamma/\Gamma}$.

Once a group~$L$ of Ratner class is fixed, consider its normaliser~$N(L)$ in~$\G$, and the action of~$N(L)$ on the Lie algebra of~$L$.  We will call \emph{unitary normaliser} of~$L$ in~$\G$ the algebraic subgroup~$N^1(L)$ of~$N(L)$ which is the kernel of the determinant of this action. We will write~$N=N^1(L)(\Q_S)$ for the group of it $\Q_S$-rational points, and~$\Gamma_N$ for~$\Gamma\cap N$. We shall benefit from the abbreviations
%\begin{equation}\label{defLF}
%M= {\Lpp}^\Omega:=\bigcap_{\omega\in\Omega} \omega\cdot \Lpp\cdot\omega^{-1}\text{ and }F:=\bigcap_{\omega\in\Omega} \omega\cdot N\cdot\omega^{-1}=\bigcap_{\omega\in H} \omega\cdot N\cdot\omega^{-1}.
%\end{equation}
\begin{equation}\label{defLF}
L^H:=\bigcap_{h\in H} h\cdot L\cdot h^{-1},~M:=\bigcap_{h\in H} h\cdot \Lpp\cdot h^{-1}~\text{ and }~F:=\bigcap_{h\in H} h\cdot N\cdot h^{-1}.
\end{equation}
The following easy observations might be useful while reading our main statement. The proof is left to the reader. (Recall~\(H^\prime\) denotes the centraliser of~\(H\) in~\(G\).)
\begin{lemma}\label{lemdebut}
\begin{enumerate}
\item The groups~$H$ and~$F$ both normalise~$M$. \label{lem1}
\item We have the identity~$N\cap H^\prime=F\cap H^\prime$. \label{lem2}
\end{enumerate}
\end{lemma}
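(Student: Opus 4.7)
The plan is to dispose of part (2) first, as it is a purely set-theoretic manipulation, and then attack part (1). For (2), the inclusion $F \cap H^\prime \subseteq N \cap H^\prime$ is immediate: taking $h = e$ in the defining intersection $F = \bigcap_{h \in H} h N h^{-1}$ shows $F \subseteq N$. For the reverse inclusion, given $n \in N \cap H^\prime$, the fact that $n$ centralises $H$ yields $h^{-1} n h = n \in N$ for every $h \in H$, whence $n \in h N h^{-1}$ for each $h$; intersecting over $H$ gives $n \in F$.

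For part (1), I would first note that $H$ normalises $M$ by the standard reindexing: for any $h_0 \in H$,
\[
h_0 M h_0^{-1} \;=\; \bigcap_{h \in H} (h_0 h)\, \Lpp\, (h_0 h)^{-1} \;=\; M,
\]
since $h \mapsto h_0 h$ is a bijection of $H$ onto itself. The assertion that $F$ normalises $M$ is more substantive, and reduces to a single non-formal input: the unitary normaliser $N = N^1(L)(\Q_S)$ normalises $\Lpp$. This is precisely the purpose of singling out $N^1(L)$ in place of the full normaliser $N(L)$, and I would invoke it from the structural discussion of Appendix~\ref{AppRatner} (next to Proposition~\ref{++finiteindex}). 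Granting this, for any $f \in F$ and any $h \in H$, the element $h^{-1} f h$ lies in $N$ and therefore normalises $\Lpp$; equivalently, $f$ normalises $h \Lpp h^{-1}$. Intersecting over $h \in H$ yields $f M f^{-1} = M$.

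The sole non-bookkeeping ingredient is thus the normalisation of $\Lpp$ by the unitary normaliser $N$. Morally this is a rigidity statement: while the full normaliser of $L$ may well conjugate the arithmetic lattice $\Gamma \cap L(\Q_S)$ to a non-commensurable lattice, cutting down to the determinant-one subgroup $N^1(L)$ is exactly what is needed to preserve the ``$++$''-closure $\overline{L^{+} \cdot (\Gamma \cap L(\Q_S))}$. For the present lemma I would simply cite the Appendix rather than reprove this fact \emph{in situ}.
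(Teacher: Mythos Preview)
The paper explicitly leaves this proof to the reader (``The following easy observations\ldots\ The proof is left to the reader''), so there is no paper argument to compare against; your proof is exactly the natural one the reader is meant to supply. Part~(2) and ``$H$ normalises $M$'' are pure bookkeeping via the reindexing $h\mapsto h_0h$, and for ``$F$ normalises $M$'' you correctly reduce to the single substantive input that $N$ normalises $\Lpp$.

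One minor caveat on the citation: the statement that \emph{all} of $N$ (rather than just $\Gamma_N$) normalises $\Lpp$ is not literally recorded next to Proposition~\ref{++finiteindex}; the footnote in \S\ref{induction} only spells this out for $\Gamma_N$. The paper nonetheless uses $F$ normalising $M$ essentially (to form $\widehat{G}=HF/M$), and your diagnosis that this is where the choice of the \emph{unitary} normaliser $N^1(L)$ earns its keep is the right moral. So the argument is sound; just be aware that the reference you point to does not contain the claim verbatim.
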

\noindent We also note that~$F$ is a normal subgroup of~$HF$, and that~$M$ is a normal
subgroup of~$F$ and of~$HF$. We will see that~$M$ is open and of finite index in~$L^H$. Our proof will introduce the subquotient~$\widehat{G}=HF/M$ of~$G$.

\section{Statements}\label{secthm}
\subsection{Main Theorem with Comments}Here is our main theorem. We use the notation and setup of section~{\S}\ref{secintro}, and make direct reference to these notations inside the statement. Recall that~$o(1)$ stands for the class of sequences in~$G$ converging to the identity~$e$.

\begin{theorem}[Local focusing of translated measures]\label{Theorem} 
Let~$\G$ be a semisimple $\Q$-algebraic group, $S$ a finite set of places, $G=\G(\Q_S)$ (see~\eqref{notationQS}), $\Gamma$ an $S$-arithmetic lattice, $H$ a connected~$\Q_S$-subgroup reductive in~$G$ (as in~\eqref{H reductif dans}) and $\Omega$ a Zariski dense open bounded subset of~$H$. Consider the probability on~$\Omega$ which is the restriction of a Haar measure~$\mu$ on~$H$, and let~$\mu_\Omega$ be its direct image on~$G/\Gamma$ (as in~{\S}\ref{secmuomega}). Write~$H^\prime$ for the centraliser of~$H$ in~$G$.

 Consider a sequence~$\left(g_i\cdot\mu_\Omega\right)_{i\geq0}$ of translates, with non zero weak limit~$\mu_\infty$, and assume the sequence~$\left(g_i\right)_{i\geq0}$ in~$G$ satisfies the hypothesis~\eqref{Hypo} p.\pageref{Hypo}.%~\cite[Theorem~2.1, part~2. of conclusion]{LemmaA} (confer~\eqref{})

To~$\mu_\infty$ one can associate a Ratner type~$[L_0]$, depending only on~$\mu_\infty$, such that the following holds. We can decompose~$\left(g_i\right)_{i\geq0}$ into finitely many subsequences such that, if we substitute~$\left(g_i\right)_{i\geq0}$ with anyone of these subsequences, then there exists a group~$L$ in the Ratner type~$[L_0]$, and, writing~$N,F,{\Lpp},M$ as in~{\S}\ref{secnotations}  (see~\eqref{defLF}), there exist~$g_\infty$ in~$G$ and~$n_\infty$ in~$N$ such that
\begin{enumerate} 
\begin{subequations}
\item{\textbf{Limit measures:}} the limit measure can be written as
\begin{equation}\label{limitformula}
\mu_\infty = g_\infty\cdot\int_{\omega\in\Omega} \left(\omega\cdot n_\infty\cdot\mu_{\Lpp}\right)~~\mu(\omega),
\end{equation}

\item{\textbf{Focusing criterion:}} the sequence~$(g_i)_{i\geq 0}$ is of class
\begin{equation}\label{focusingcriterion}
O(1)\cdot (H^\prime\cap F)\cdot L^H,
\end{equation}
and there is a subsequence of~$(g_i)_{i\geq0}$ which is of class 
\begin{equation}\label{finefocusing} g_\infty\cdot o(1)\cdot n_\infty\cdot M.\end{equation}
\end{subequations}
\end{enumerate}

Moreover, we can choose~$n_\infty$ in the topological closure of~${\left(H^\prime\cap F\right)\cdot\Gamma_N}$.
\end{theorem}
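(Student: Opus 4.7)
The plan is to follow the strategy of~\cite{EMSAnn}, adapted to the $S$-arithmetic framework of~\cite{MargulisTomanov,TomanovOrbits,KT}, while exploiting the geometric stability machinery of~\cite{Lemmanew}. From hypothesis~\eqref{Hypo} together with~\cite[Theorem~1.3]{LemmaA}, the family $\left(g_i\cdot \mu_\Omega\right)_{i\geq 0}$ is tight, so after extracting a subsequence the limit $\mu_\infty$ is a probability measure. This disposes of the tightness at the outset and lets us proceed with soft measure-theoretic arguments.

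Next I would identify the Ratner-class group $L$. Testing against $\phi\in C_c(G/\Gamma)$ gives the disintegration
\[
\int \phi\,d\mu_\infty = \lim_{i\to\infty} \int_\Omega \phi(g_i\omega\Gamma)\,d\mu(\omega),
\]
so the limiting behaviour of $(g_i\omega\Gamma)_i$ along $\omega\in\Omega$ governs $\mu_\infty$. The $S$-arithmetic Ratner--Margulis--Tomanov measure classification, applied to the ergodic components, forces the accumulation measures to be translates of the homogeneous measures $\mu_{\Lpp}$ for some $L\in\RatQ$. Choosing $L$ \emph{minimal} such that $\mu_\infty$ decomposes as an average of $H$-translates of $\mu_{\Lpp}$ canonically determines the Ratner type $[L_0]$; the finitely many $\Gamma$-conjugates of a representative $L$ realising this type account for the announced decomposition of $(g_i)$ into finitely many subsequences.

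The core of the argument is the linearisation step. For such $L$ fixed, I pick a $\Q_S$-representation $\rho\colon G\to GL(V)$ with a vector $v_L\in V$ whose $G$-stabiliser is the unitary normaliser~$N^1(L)$, so that translates of the orbit of $L$ are encoded by $\rho(G)\cdot v_L$. The hypothesis~\eqref{AnS} applied to $\rho$ provides the uniform control over $\omega\in\Omega$ needed to run the Dani--Margulis linearisation on the $\Gamma$-orbit of $v_L$; joined to the $H$-conjugation behaviour of $L$ (which produces the subgroups $L^H$, $M$, $F$ of~\eqref{defLF}) this pins the sequence into the class $O(1)\cdot(H^\prime\cap F)\cdot L^H$ of~\eqref{focusingcriterion}. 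Refining along a further subsequence by Lemma~\ref{lemdebut}, one reads off a coarse limit $g_\infty$ and a transverse $N$-correction $n_\infty$, yielding~\eqref{finefocusing}; substituting back into the disintegration delivers~\eqref{limitformula}. The structural results of Appendix~\ref{AppRatner} on Ratner-class groups are then invoked to arrange that $n_\infty$ may be chosen in $\overline{(H^\prime\cap F)\cdot \Gamma_N}$.

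The principal obstacle, absent from~\cite{EMSAnn}, is the simultaneous lack of $\Q$-rationality of $H$ and of closedness of the orbit $H\Gamma/\Gamma$: the standard cocycle identities relating translates of $\mu_H$ and of $v_L$ collapse, and the usual non-divergence arguments fail. The substitute is precisely the analytic stability~\eqref{AnS}, which on every $\Q_S$-representation space enforces a uniform non-degeneration property of $\Omega$-translates. Propagating this uniformity through the subquotient $\widehat{G}=HF/M$ and verifying that it survives the passage to the $H$-stable intersections $L^H$, $M$, $F$ of $L$, $\Lpp$, $N$ is where I expect the hardest technical work to lie.
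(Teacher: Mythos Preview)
Your outline captures the broad architecture (tightness from~\eqref{Hypo}, Ratner classification, linearisation with~\eqref{AnS}), but there is a genuine structural gap: you never introduce the group~$W=(\Stab_G(\mu_\infty))^+$, and without it the two central mechanisms of the proof are absent.

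First, Ratner's classification cannot be applied to~$\mu_\infty$ until some unipotent invariance is exhibited; the paper defines~$W$ as the maximal unipotent-generated subgroup fixing~$\mu_\infty$, decomposes~$\mu_\infty$ into $W$-ergodic components, and only then picks~$L_0$ of minimal dimension with~$\mu_\infty^{[L_0]}\neq 0$. Your sentence ``applied to the ergodic components'' has no group to be ergodic for. Second, the linearisation step does \emph{not} directly yield the class~$O(1)\cdot(H'\cap F)\cdot L^H$. What Proposition~\ref{AProp313} plus~\eqref{AnS} actually give is that the~$\gamma_i\cdot p_{L_0}$ lie in a finite set (hence the finitely many subsequences) and that~$g_i\Omega\cdot p_L$ stays bounded; then \cite[Theorem~2]{Lemmanew} upgrades this to~$g_i\in O(1)\cdot F$. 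The refinement to~$O(1)\cdot(H'\cap F)\cdot L^H$ is a separate argument carried out in the subquotient~$\widehat{G}=HF/M$: one assumes the image~$\widehat{g}_i$ is unbounded modulo the centraliser of~$\widehat{\lie{h}}$, manufactures via Proposition~\ref{unipotescence} a non-trivial unipotent element of~$\Stab_G(\mu_\infty)$ not in~$W$, and reaches a contradiction with the maximality of~$W$. This ``creation of unipotent invariance'' loop, closing back on the definition of~$W$, is the engine that produces~\eqref{focusingcriterion}; it is absent from your plan.

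Two smaller points: Lemma~\ref{lemdebut} is a one-line observation about normalisers and cannot produce~$g_\infty$ or~$n_\infty$; those arise at the very end, after the boundedness is established, by extracting convergent subsequences in~$\widehat{G}$ and in~$N/(\Lpp\Gamma_N)$. And the purity statement---that~$\mu_\infty$ is supported on a \emph{single} Ratner type---is itself a nontrivial consequence of~\eqref{keypoint} (that~$\overline{\Omega}N\subseteq X(L,W)$) together with the minimality of~$L_0$, and deserves to be isolated.
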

Formula~\eqref{limitformula} can be read as follows. For any continuous function~$f:G/\Gamma\to\R$ with compact support (a posteriori, boundedness of~$f$ will suffice), its integral against~$\mu_\infty$ can be computed via the double integral
$$\int_{x\in G/\Gamma} f(x)~~\mu_\infty(x)=\int_{\omega\in\Omega} \left(\int_{y\in G/\Gamma} f\left({g_\infty}\cdot\omega\cdot n_\infty\cdot y\right)~~\mu_{\Lpp}(y)\right)~~\mu(\omega).
$$
Note that on the right hand side, the integral is in~\(G\) against $\mu|_\Omega$ and \emph{not} against~$\mu_\Omega$ (in~\(G/\Gamma\)). The latter would not make sense because~$L$ is not necessarily normalised by~$\Omega$.
 Let us note that in the context of~\cite{EMSAnn}, the fact that~$L$ is normalised by~$\Omega$ holds is a key technical fact in the proof; this is the subject of \cite{EMSCorr}. Our more general context and our method do not rely on this.

We also note as a consequence of formula~\eqref{limitformula} that~$\mu_\infty$ is a probability: it has mass one; there is no escape of mass
at infinity. Actually this follows from hypothesis~\eqref{Hypo} as proved in the generalisation~\cite{LemmaA} of~\cite{EMSGAFA}. The hypothesis~$\mu_\infty$ being non zero is even superfluous. With this hypothesis removed, our statement do
encompass the main result of~\cite{LemmaA}, and of~\cite{EMSGAFA} (we remind~{\S}\ref{RemarkcasGAFA}).

Formula~\eqref{finefocusing} is to be understood as a convergence in the quotient~$G/M$: the image sequence~$\left(g_i\cdot M\right)_{i\geq0}$ of cosets in~$G/M$ is such that 
$$\lim_{i\to\infty}g_i\cdot M \to g_\infty\cdot n_\infty\cdot M\text{~in~}G/M.$$

Let us warn that in~\eqref{limitformula}, neither~$g_\infty$, nor~$n_\infty$, nor even~$L$ are in general determined by~$\mu_\infty$. For example, for any $n\in H'\cap N$, one can replace the pair $(g_\infty,n_\infty)$ by $(g_\infty n,n^{-1}n_\infty)$. Nevertheless, the Ratner type~$[L_0]$ can be defined canonically in terms of~$\mu_\infty$, although implicitly (cf the begining of the proof, notably footnote~\ref{anticipating} at p.\pageref{anticipating}). %Furthermore, given~$\left(g_i\right)_{i\geq0}$, there is a finite subset of this type in which one can find~$L$.

The class~\eqref{finefocusing} describes the ``typical" behaviour of~$(g_i)_{i\geq0}$ in the following sense: nested inside any infinite subsequence~$(a_i)_{i\geq 0}$ of~$(g_i)_{i\geq0}$ one can find an infinite subsequence which is of class~\eqref{finefocusing}, although maybe for another triple~$(L,g_\infty,n_\infty)$ describing the same measure~$\mu_\infty$ via formula~\eqref{limitformula}. Indeed one can apply Theorem~\ref{Theorem} to the subsequence~$(a_i)_{i\geq 0}$ instead of~$(g_i)_{i\geq0}$.

%A word about convergence. We will use \emph{weak convergence} of measures. We understand by weak convergence the pointwise convergence as functionals on the space of continuous functions with compact support (which is actually a kind of~\emph{weak-$\star$} convergence from the viewpoint of functional analysis). One should note that, as far as we are interested with probability measures converging to probability measures, tight, vague, and weak convergence\footnote{Test functions are respectively: continuous and bounded (tight convergence); continuous and vanishing at infinity (vague convergence); continuous with compact support (weak convergence).} are equivalent. In above Theorem~\ref{Theorem}, the limit measure~\eqref{limitformula} is indeed a probability measure.

\subsection{Context}\label{seccontext}
We compare our statement to \cite{EMSAnn,EMSGAFA}. 

Theorem~1.7 and Corollary~1.13 in \cite{EMSAnn} concern limits of translates of a globally $H$-invariant probability measure supported on $H\Gamma/\Gamma$, in the case $\Q_S=\R$, such that~$H$ is defined over~$\Q$ and under the non-divergence hypothesis, i.e. the limit is a probability measure. The latter is intimately related to arithmetic stability~\eqref{ArS}.

Every application to counting integral points on varieties met in~\cite{EMSAnn} and the results in~\cite{EMSGAFA} assumes furthermore that $H'$ is defined over $\Q$ and is $\Q$-anisotropic, named~\eqref{RnD} in \S2.4. As noted there, we can assume hypothesis~\eqref{Hypo} in that case, and our statement generalises previous results: to the $S$-arithmetic setting; to a piece of measure $\mu_\Omega$ on $H\Gamma/\Gamma$; to a non-necessarily $\Q$-defined $H$ and finally to a non-necessarily closed orbit $H\Gamma/\Gamma$. (In order to recover statements about full closed orbits form statement from a piece of orbit we refer to the technique in~\cite{LemmaA}.)

However, without~\eqref{RnD}, the relation is not that simple. Hypothesis~\eqref{Hypo} is not vacuous. It implies non-divergence by \cite{LemmaA}, but not the other way around (consider a divergent sequence~$(g_i)_{i\geq0}$ in~$H^\prime\cap \Gamma$). A side effect of assuming the stronger analytic stability is that our focusing criterion is also stronger than the corresponding~\cite[Corollary~1.13]{EMSAnn}. Nonetheless, there are examples where~\eqref{Hypo} can not be satisfied, yet convergence to a probability measure occurs. Again, we insist though that our method does not require the additional assumptions of rationality of~$H^\prime$ and even closedness of the $H$-orbit where~$\mu_\Omega$ is supported.

We note also that~\cite[Theorem 1.7]{EMSAnn} concludes that $\mu_\infty$ is homogeneous, whereas our classification seems more complicated, being represented as an integral of homogeneous measures. However, when $H$ is defined over $\Q$, so is the group~$\bigcap_{\omega\in\Omega} \omega·L·\omega^{-1}$, and one should actually replace~$L$ with the $\Q$-Zariski closure of~$\bigcap_{\omega\in\Omega} \omega\cdot L^+\cdot\omega^{-1}$. The latter is normalised by~$\Omega$, hence by $H$, and we retrieve~\cite[Remark~4.1]{EMSAnn}.

As for the proof of Theorem~\ref{Theorem}, it is similar to~\cite{EMSAnn}, with one fundamental difference: the lack of rationality led us to introduce analytic stability to take further advantage of the linearisation technics.

\subsection{Complements}
\subsubsection{Rational case} Let us discuss some simplifications which occurs when~$H$ is rational over~$\Q$.
This case is arguably the one of importance in most applications to date.

$L=L^H$ is normalised by~$H$, $L$ $N$ et $F$ sont rationnels; $M=\Lpp$

The focusing criterion becomes $O(1) L$.

\subsubsection{Addenda from the proof: lifted convergence} The way our proof proceeds (see {\S}\ref{subsection-proof-structure}) allows us to state finer properties on the way~$(g_i\mu_\Omega)_{i\geq0}$
does converge to~$\mu_\infty$. Actually, writing
\[
\Gamma_N=\Gamma\cap N,
\]
all convergence (and the ergodic decomposition) occur at the higher~``level''~$G/\Gamma_N$.

Moreover in this statement we give a precise meaning to the property that equidistribution of
homogeneous sets is ``inner'': from the inside to the outside. In the typical case, one expect the 
support of measures in an equidistributing sequence to be eventually contained in the support
of the limit probability.  In the general case, one has to consider we may also be translating
by a convergent sequence: the support of the measures in an equidistributing sequence are
contained in the support of a closer and closer translate of the limit probability. Here, roughly speaking, 
the equidistribution phenomenon actually occurs inside right~$\Lpp$-orbits. Passing to 
the right quotient by~$\Lpp$, we are left with little dynamic: that of a bounded sequence of
translators.

\begin{proposition}[Addenda to~Theorem~\ref{Theorem}] Let us consider the situation of the Theorem~\ref{Theorem}, once~$(g_i)_{i\geq0}$ is  substituted with one of the finitely many considered subsequences. We can conclude moreover, 
\begin{enumerate}
\begin{subequations}
\item \label{stat1} denoting~$\tilde{\mu}_\Omega$ the direct image of~$\mu|_\Omega$ into~$G/\Gamma_N$, the sequence of translated measures~$g_i\cdot\tilde{\mu}_\Omega$ has limit
\begin{equation}\label{limitformulatilde1}
\widetilde{\mu}_\infty := g_\infty\cdot\int_{\omega\in\Omega} \left(\omega\cdot n_\infty\cdot\tilde{\mu}_{\Lpp}\right)~~\mu(\omega),
\end{equation}
in the space of probabilities on~$G/\Gamma_N$, where~$\tilde{\mu}_{\Lpp}$ is the left~$\Lpp$-invariant probability on~$\Lpp\Gamma_N/\Gamma_N$.

\item \label{stat2} the sequence~$(\mu_i)_{i\geq 0}$ of ``corresponding right-$\Lpp$ invariant" measures
\begin{equation}
\mu_i:= g_i\cdot\int_{\omega\in\Omega} \left(\omega \cdot\tilde{\mu}_{\Lpp}\right)~~\mu(\omega)
\end{equation}
has as a limit the measure~$\widetilde{\mu}_\infty$ from~\eqref{limitformulatilde1}.

\item \label{stat3} 	for every~$i$, the probabilities~$g_i\cdot\tilde{\mu}_\Omega$ and~$\mu_i$ have a same direct image~$\widehat{\mu}_i$ into~$G/(\Lpp\Gamma_N)$, and the sequence~$(\widehat{\mu}_i)_{i\geq 0}$ has limit
\begin{equation}\label{limitformulatilde2}
\widehat{\mu}_\infty = g_\infty\cdot\int_{\omega\in\Omega} \left(\omega\cdot n_\infty \cdot \delta_{\Lpp\Gamma_N}\right)~~\mu(\omega).
\end{equation}
\end{subequations}
\end{enumerate}
\end{proposition}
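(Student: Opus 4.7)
The plan is to observe that (1), (2), (3) are linked by a commutative diagram of continuous pushforward maps, and that (1) is essentially a byproduct of the proof of Theorem~\ref{Theorem}. First I verify the framework: $\Gamma_N = \Gamma \cap N$ normalises $\Lpp = \overline{L^+ \cdot(\Gamma\cap L)}$, because $\Gamma_N \subseteq N$ normalises both $L^+$ and $\Gamma \cap L$. Hence $\Lpp \Gamma_N$ is a closed subgroup of $G$ and the quotient maps
\[
G/\Gamma_N \xrightarrow{\pi} G/(\Lpp\Gamma_N), \qquad G/\Gamma_N \to G/\Gamma,
\]
are continuous, inducing continuous pushforwards of probability measures. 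Statement (1) is then to be extracted from the proof of Theorem~\ref{Theorem}: per the discussion in~\S\ref{subsection-proof-structure}, the Dani--Margulis linearisation and the ergodic decomposition used in that proof naturally take place at the level of $G/\Gamma_N$, since the tubes around singular loci are $N$-equivariant and $\tilde{\mu}_{\Lpp}$ is the relevant $\Lpp$-invariant probability on $\Lpp\Gamma_N/\Gamma_N \hookrightarrow G/\Gamma_N$. It suffices to keep track, at each step, of which group is being divided out; the convergence $g_i \cdot \tilde{\mu}_\Omega \to \tilde{\mu}_\infty$ in the space of probabilities on $G/\Gamma_N$ then results, and its image under $G/\Gamma_N \to G/\Gamma$ recovers~\eqref{limitformula}.

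Statement (3) follows by applying $\pi_*$. Since the support $\Lpp\Gamma_N/\Gamma_N$ of $\tilde{\mu}_{\Lpp}$ is the $\pi$-preimage of one point, $\pi_* \tilde{\mu}_{\Lpp} = \delta_{\Lpp\Gamma_N}$; substituting this inside~\eqref{limitformulatilde1} yields~\eqref{limitformulatilde2}. Continuity of $\pi_*$ turns (1) into $\hat{\mu}_i \to \hat{\mu}_\infty$. Applying $\pi_*$ to the definition of $\mu_i$ gives the same integral $\int_\Omega \delta_{g_i\omega\Lpp\Gamma_N}\,d\mu(\omega) = \hat{\mu}_i$, so the two sequences have the same projection in $G/(\Lpp\Gamma_N)$.

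For (2), I combine (1) with the fine focusing~\eqref{finefocusing}: after passing to the subsequence furnished by Theorem~\ref{Theorem}, $g_i = g_\infty \varepsilon_i n_\infty m_i$ with $\varepsilon_i \to e$ and $m_i \in M$. Since $H$ normalises $M$ (Lemma~\ref{lemdebut}) and $M \subseteq \Lpp$, for each fixed $\omega \in \Omega$ one has $m_i \omega = \omega \cdot (\omega^{-1} m_i \omega)$ with $\omega^{-1} m_i \omega \in M \subseteq \Lpp$, whence $g_i \omega \tilde{\mu}_{\Lpp} = g_\infty \varepsilon_i n_\infty \omega \tilde{\mu}_{\Lpp} \longrightarrow g_\infty n_\infty \omega \tilde{\mu}_{\Lpp}$. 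The remaining identification $n_\infty \omega \tilde{\mu}_{\Lpp} = \omega n_\infty \tilde{\mu}_{\Lpp}$ uses the last clause of Theorem~\ref{Theorem}: $n_\infty$ lies in the closure of $(H^\prime \cap F)\cdot\Gamma_N$; elements of $H^\prime\cap F$ commute with $\omega \in H$, while elements $\gamma$ of $\Gamma_N$ stabilise $\tilde{\mu}_{\Lpp}$ (both $\gamma\tilde{\mu}_{\Lpp}$ and $\tilde{\mu}_{\Lpp}$ are $\Lpp$-invariant probabilities on $\Lpp\Gamma_N/\Gamma_N$, and uniqueness applies). A continuity argument extends the identity to the closure, and dominated convergence on $\Omega$ then delivers $\mu_i \to \tilde{\mu}_\infty$.

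The main obstacle is organisational rather than conceptual: justifying (1) requires revisiting the proof of Theorem~\ref{Theorem} and confirming that every limiting step---ergodic decomposition, convergence off the linearised tubes, and control of mass inside them---is already formulated at the level of $G/\Gamma_N$, the passage to $G/\Gamma$ being only a final pushforward. A secondary point is the closure argument in (2), where continuity of the translation action on probabilities (in the weak topology) together with the tightness granted by hypothesis~\eqref{Hypo} is needed to extend the commutation identity from $(H^\prime \cap F)\cdot \Gamma_N$ to its topological closure.
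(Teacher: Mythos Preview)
Your treatment of (1) and (3) matches the paper's: statement~(1) is indeed established in the course of the main proof (it is exactly~\eqref{liftlimit}), and (3) follows by pushing forward along~$\pi$. For (2), however, the paper does \emph{not} argue by direct computation via the fine focusing. It deduces (2) from (3) by the Bourbaki correspondence between right-$\Lpp$-invariant probabilities on~$G/\Gamma_N$ and probabilities on~$G/(\Lpp\Gamma_N)$: this correspondence is a homeomorphism for the vague topologies, so $\widehat{\mu}_i\to\widehat{\mu}_\infty$ lifts at once to $\mu_i\to\widetilde{\mu}_\infty$, with no need to touch the focusing decomposition.

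Your direct approach to (2) has two genuine problems. First, \eqref{finefocusing} is stated only for a \emph{further} subsequence of the already-substituted $(g_i)_{i\geq0}$, so your computation only yields subsequential convergence; you would still need a tightness/uniqueness argument to upgrade this, and you do not supply one. Second, and more seriously, the swap $n_\infty\omega\,\tilde{\mu}_{\Lpp}=\omega\,n_\infty\,\tilde{\mu}_{\Lpp}$ is not justified by your argument. Writing $n_\infty$ as a limit of products $z\gamma$ with $z\in H'\cap F$ and $\gamma\in\Gamma_N$, the commutation with $z$ is fine, but for the $\gamma$ part you would need $\omega^{-1}\gamma\omega$ to stabilise $\tilde{\mu}_{\Lpp}$, i.e.\ to lie in $\Lpp\Gamma_N\subseteq N$. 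Since $H$ need not normalise $N$ (this is precisely why the paper distinguishes $F=\bigcap_{h\in H} hNh^{-1}$ from $N$), the conjugate $\omega^{-1}\gamma\omega$ has no reason to stay in $N$ when $\gamma\in\Gamma_N\smallsetminus F$. The quotient-measure correspondence sidesteps this entirely and is the cleaner route.
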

The more stringent statement is actually statement~\eqref{stat1}, which is proved at the end of the proof of Theorem~\ref{Theorem}. The statement~\eqref{stat3} is deduced by direct image into~$G/(\Lpp\Gamma_N)$. One can also deduce the identity~\eqref{limitformula} from statement~\eqref{stat3} by direct image, but into~$G/\Gamma$ instead. The statement~\eqref{stat2} is deduced from statement~\eqref{stat3} by the correspondence  between invariant measures
 and measures on the quotient\footnote{See \cite{Integration}[VIII-{\S}2 Prop.~4] about quotient measures. This correspondence is compatible with the vague topologies, by \emph{loc. cit}. We are concerned with probabilities to probabilities, so the
 convergence in the statement can be interpreted as vague convergence.}.

\subsubsection{Uniformity with respect to~$\Omega$}
We have the following strengthening of our main Theorem.
\begin{theorem} The situation is that of Theorem\ref{Theorem} and of its conclusions~\eqref{}. Notably, we consider a particular~$\Omega$ and have introduced a group~$L$ in the Ratner class~$\Rat$, and replaced~$(g_i)_{i\geq0}$ by a subsequence satisfying the focusing criterion~\eqref{finefocusing}.
\begin{itemize}
\item Suppose~$\Omega^\prime$ is another bounded non empty open subset of~$H$, which is included in~$\Omega$ and which \emph{is~$\Lpp$-saturated to the right inside~$\Omega$}, by which we mean the property
\[
\Omega^\prime=(\Omega^\prime\cdot \Lpp)\cap\Omega.
\]
We consider the normalised probability~$\frac{\mu(\Omega)}{\mu(\Omega^\prime)}\mu|_{\Omega^\prime}$ and denote~$\tilde{\mu}_{\Omega^\prime}$ its direct image on~$G/\Gamma_N$. Then the limit formula~\eqref{limitformulatilde1} holds with~$\Omega^\prime$ instead of~$\Omega$: the sequence of~$(g_i\cdot\tilde{\mu}_{\Omega^\prime})_{i\geq0}$ of probabilities on~$G/\Gamma_N$ converges to
\begin{equation}\label{limitformulaprime}
{\tilde{\mu}_\infty}^\prime = g_\infty\cdot\int_{\omega\in\Omega^\prime} \left(\omega\cdot n_\infty\cdot\tilde{\mu}_{\Lpp}\right)~~\mu(\omega).
\end{equation}
In particular the Theorem~\ref{Theorem} holds with~$\Omega$ replaced by~$\Omega^\prime$, for the same~$L$
 and the same subsequence of~$(g_i)_{i\geq0}$.
\item Let assume that the group~$L$ we are given by the Theorem~\ref{Theorem} is of minimal dimension among
all the groups in the Ratner class~$\Rat$ which occurs when applying Theorem~\ref{Theorem} to any infinite subsequence of~$(g_i)_{i\geq0}$ and any non empty bounded subset~$\Omega^\prime$ of~$H$. 
Then the Theorem~\ref{Theorem} holds with~$\Omega$ replaced by~$\Omega^\prime$, for the same~$L$
 and the same subsequence of~$(g_i)_{i\geq0}$.
\item Still under the previous minimality assumption about~$L$ we have the following. Let~$f$ be any ``density'' function~$f\in L^1(H,\mu)$. Write~$\tilde{\mu}_f$ for the direct image of~$f\cdot \mu$ on~$G/\Gamma_N$ then the sequence~$(g_i\cdot\tilde{\mu}_f)$ of probabilities on~$G/\Gamma_N$ converges to
 \begin{equation}\label{limitformuladensite}
{\tilde{\mu}_\infty}^\prime = g_\infty\cdot\int_{\omega\in H} \left(\omega\cdot n_\infty\cdot\tilde{\mu}_{\Lpp}\right)~~f\mu(\omega).
\end{equation}
\end{itemize}
\end{theorem}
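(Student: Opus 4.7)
My plan is to prove the three items in sequence, working at the level of $G/\Gamma_N$ and relying on the Addenda to Theorem~\ref{Theorem}. For the first item, the key observation is that the limit measure $\tilde{\mu}_\infty$ on $G/\Gamma_N$ is an integral over $\Omega$ of the measures $\omega\cdot n_\infty\cdot\tilde{\mu}_\Lpp$, and this integrand depends on $\omega$ only through the right coset $\omega\Lpp$ (since $\tilde{\mu}_\Lpp$ is left $\Lpp$-invariant and $n_\infty\in N$ normalizes $\Lpp$). The $\Lpp$-saturation condition $\Omega'=(\Omega'\cdot\Lpp)\cap\Omega$ is precisely the statement that $\Omega'$ is a union of equivalence classes for the relation $\omega\sim\omega'\Leftrightarrow\omega^{-1}\omega'\in\Lpp$ on $\Omega$. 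Consequently, the restriction $\mu|_\Omega\to\mu|_{\Omega'}$ at the source is compatible with the disintegration of $\tilde{\mu}_\infty$ over $G/(\Lpp\Gamma_N)$ given by Addenda~\eqref{stat3}, whose fibres are left $\Lpp$-invariant (Addenda~\eqref{stat2}). Testing against a continuous compactly supported function on $G/\Gamma_N$, I descend through the quotient $G/\Gamma_N\to G/(\Lpp\Gamma_N)$, restrict the convergence of $\widehat{\mu}_\infty$ to the (open) image of $\Omega'$, and lift back, yielding~\eqref{limitformulaprime}.

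For the second item, I apply Theorem~\ref{Theorem} to the same subsequence $(g_i)_{i\geq 0}$ with $\Omega'$ in place of $\Omega$, extracting a further subsequence to obtain a Ratner-class group $L'$, limit measure $\tilde{\mu}_\infty'$, and data $g_\infty',n_\infty'$. The additivity identity
\begin{equation*}
\mu(\Omega)\,g_i\tilde{\mu}_\Omega=\mu(\Omega')\,g_i\tilde{\mu}_{\Omega'}+\mu(\Omega\setminus\Omega')\,g_i\tilde{\mu}_{\Omega\setminus\Omega'}
\end{equation*}
passes to weak limits, giving $\mu(\Omega')\tilde{\mu}_\infty'\leq\mu(\Omega)\tilde{\mu}_\infty$ as measures on $G/\Gamma_N$. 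Comparing supports, every $(L')^{++}$-orbit in the support of $\tilde{\mu}_\infty'$ must lie inside an $\Lpp$-orbit from the support of $\tilde{\mu}_\infty$, and since both are open in the $\Q_S$-points of their respective Ratner-class groups, this forces $\dim L'\leq\dim L$. Combined with the minimality hypothesis $\dim L\leq\dim L'$, we obtain $\dim L=\dim L'$; the intrinsic canonicity of the Ratner type $[L_0]$ attached to the limit measure (as remarked after Theorem~\ref{Theorem}) then identifies $L'$ with $L$. Since $L$ is uniquely determined by $\mu_\infty'$, no further extraction is needed and the original subsequence satisfies Theorem~\ref{Theorem} with $\Omega$ replaced by $\Omega'$ and the same $L$.

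For the third item, by item~2 the conclusion~\eqref{limitformulatilde1} holds with $\Omega$ replaced by any bounded open $\Omega'\subseteq H$. Linearity extends convergence to simple non-negative functions, monotone convergence covers non-negative $f\in L^1(H,\mu)$, and the Jordan decomposition handles general $f\in L^1$; tightness along the approximation is assured by hypothesis~\eqref{Hypo} via~\cite{LemmaA}. The main obstacle is item~2, specifically the identification $L'=L$: the support comparison gives only equality of dimensions, and closing the gap to actual Ratner-type equality requires using the intrinsic characterization of $[L_0]$ in terms of $\mu_\infty$ which is only implicit in the proof of the main theorem.
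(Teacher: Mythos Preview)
The paper's own proof of this theorem is itself incomplete: it treats only the first bullet, establishes the domination $g_i\tilde\mu_{\Omega'}\leq C\,g_i\tilde\mu_\Omega$ (hence tightness and $\mu_\infty'\leq C\mu_\infty$, hence $\Supp(\mu_\infty')\subseteq\Supp(\mu_\infty)$), and then trails off with ``We use now the saturation hypothesis\ldots''. No argument is given for the second and third bullets. So your attempt goes further than the paper does, and there is nothing complete to compare against.

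That said, your argument for the first bullet has a genuine gap. You push down to $G/(\Lpp\Gamma_N)$, ``restrict the convergence to the open image of~$\Omega'$'', and then ``lift back''. The restriction step can in fact be justified: using the fine focusing $g_i=b_i z_i m_i$ with $z_i\in H'\cap F$ commuting with $H$ and $m_i\in M$, one has $g_i\omega(\Lpp\Gamma_N)=b_i\omega z_i(\Lpp\Gamma_N)$, and this orbit map converges \emph{uniformly} on bounded subsets of~$H$ to $\omega\mapsto g_\infty\omega n_\infty(\Lpp\Gamma_N)$; so the pushforward of any bounded measure on~$H$ converges, no restriction-of-weak-convergence issue arises. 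The real problem is the lift back. Knowing that $\widehat{g_i\tilde\mu_{\Omega'}}\to\widehat\mu_\infty'$ in $G/(\Lpp\Gamma_N)$ determines the limit only among \emph{right $\Lpp$-invariant} measures on $G/\Gamma_N$; but you have not shown that every limit point of $g_i\tilde\mu_{\Omega'}$ on $G/\Gamma_N$ is right $\Lpp$-invariant. Domination by $\tilde\mu_\infty$ gives absolute continuity, not invariance. This is exactly where the saturation hypothesis must enter---it ensures that the pieces $\Omega'$ and $\Omega\cap(\Omega'\Lpp)^c$ are unions of $(\omega\Lpp)\cap\Omega$-classes, so that the fibrewise equidistribution underlying Addenda~\eqref{stat2} for $\Omega$ restricts---but your sketch does not carry this out.

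For the second bullet you correctly flag your own gap. Two further remarks: the statement does not require $\Omega'\subseteq\Omega$, so your additivity identity is not directly available; one can repair this by enlarging $\Omega$ to a bounded open set containing both and invoking the first bullet, but that must be said. More substantively, from $\mu_\infty'\leq C\mu_\infty$ and equality of dimensions one needs to conclude $[L']=[L]$. The cleanest route is to observe that each $W$-ergodic component of $\mu_\infty$ (a translate of $\mu_{\Lpp}$) either carries zero $\mu_\infty'$-mass or is itself a $W'$-ergodic component of $\mu_\infty'$ (being homogeneous under a group containing $W'$ and of the same dimension as $(L')^{++}$); this identifies the Ratner type of $\mu_\infty'$ with $[L]$. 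Your appeal to the ``intrinsic canonicity of $[L_0]$'' is correct in spirit but needs this extra step, since the canonicity is attached to $\mu_\infty'$, not to $\mu_\infty$.

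Your third bullet is essentially a density argument and is fine once the second is secured; the only care needed is that approximation of $f$ in $L^1$ by simple functions gives approximation of the associated measures in total variation uniformly in~$i$, which is immediate since $\|g_i\tilde\mu_{f_1}-g_i\tilde\mu_{f_2}\|\leq\|f_1-f_2\|_{L^1}$.
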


We can proceed with the first point without further ado.
\begin{proof} Let~$\Omega^\prime$ be another bounded non empty open subset of~$H$ included in~$\Omega$ which is~$\Lpp$-saturated to the right inside~$\Omega$. Then we have the domination of measures
\[g_i\cdot\tilde{\mu_{\Omega^\prime}}\leq C\cdot g_i\cdot\tilde{\mu_{\Omega}}\] for every index~$i\geq 0$, with a uniform constant~$C=\mu(\Omega)/\mu(\Omega^\prime)$. As a consequence, the sequence~$(g_i\cdot\tilde{\mu_{\Omega^\prime}})_{i\geq 0}$ is tight, and any adherence value~${\mu_\infty}^\prime$
will satisfy the domination
\[{\mu_\infty}^\prime\leq C\cdot {\mu_\infty}.\]
For further reference, we note that, regarding supports of the measures, we deduce
\[\Supp({\mu_\infty}^\prime)\subseteq \Supp({\mu_\infty}).\]

We use now the saturation hypothesis...
\end{proof}

\begin{proposition}[On orbits which are closed (to each other)]
Consider two groups~$L$ and~$L^\prime$ in the Ratner class~$\Rat$, and two translated closed orbits~$O=g\cdot \Lpp\Gamma/\Gamma$
and~$O^\prime=g^\prime\cdot {\Lpp}^\prime\Gamma/\Gamma$ in~$G/\Gamma$.

\end{proposition}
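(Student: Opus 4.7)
The plan is to compare the two closed orbits $O$ and $O^\prime$ by reducing questions about their mutual position in $G/\Gamma$ to algebraic relations between $L$, $L^\prime$ and the translators $g$, $g^\prime$, systematically using the structural results on groups of Ratner class recalled in Appendix~\ref{AppRatner} together with the Borel--Wang density theorem (which asserts that $\Gamma\cap L$ is Zariski dense in~$L$ for any $L$ of Ratner class).

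A first reduction would be to normalise by left translation, replacing the pair $(g,g^\prime)$ by $(e,g^{-1}g^\prime)$, so that $O=\Lpp\Gamma/\Gamma$ and $O^\prime=h\cdot{\Lpp}^\prime\Gamma/\Gamma$ with $h:=g^{-1}g^\prime$. The proposition's conclusion should then amount to constraining~$h$ modulo a suitable coset determined by $L$, $L^\prime$, and~$\Gamma$. When $O\cap O^\prime$ is nonempty, I would pick a common point $x_0$ and consider the subgroup $J:=\Lpp\cap h\cdot{\Lpp}^\prime\cdot h^{-1}$. Its orbit through~$x_0$ lies in $O\cap O^\prime$, and by a Ratner-type argument applied to the unipotent generators of $L^+\cap h L^{\prime +} h^{-1}$, combined with the density of $\Gamma\cap L$ in~$L$, this orbit is homogeneous for a subgroup~$L^{\prime\prime}$ of Ratner class satisfying $L^{\prime\prime}\subseteq L\cap h L^\prime h^{-1}$. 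Uniqueness up to $\Gamma$-conjugacy and finite index of $\Lpp$ in $L(\Q_S)$ (Proposition~\ref{++finiteindex}) then pins~$h$ down modulo the normaliser of~$L^{\prime\prime}$.

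The main obstacle would be the case where ``closed to each other'' refers to mere proximity rather than actual intersection, which is presumably the intended regime given the parenthetical in the title. I would handle this by a compactness-and-approximation argument: sequences $x_n\in O$, $y_n\in O^\prime$ with $d(x_n,y_n)\to 0$ yield elements $\gamma_n\in\Gamma$ such that the translates $h\gamma_n$ accumulate into a coset of the normaliser $N(L)$ in~$G$, again via the finite-index open embedding $\Lpp\subseteq L(\Q_S)$. Passing to the limit in the algebraic relations defining $L$, $h L^\prime h^{-1}$, and their lattices, and invoking the rigidity that a group in~$\Rat_\Q$ is determined by its arithmetic lattice (Borel--Wang plus the structural facts in Appendix~\ref{AppRatner}), one would conclude the rigid coset description of~$h$ which the proposition is meant to assert, giving either an explicit containment $O\subseteq O^\prime$ (or vice versa), or a precise algebraic incompatibility certifying that the two orbits remain at positive distance in $G/\Gamma$.
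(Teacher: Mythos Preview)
The proposition you are attempting to prove is incomplete: as written in the paper, it consists only of a setup (two groups~$L$, $L^\prime$ of Ratner class and two translated closed orbits~$O$, $O^\prime$) and then simply ends, with no conclusion stated. There is nothing to prove, and the paper offers no proof---this is an unfinished stub left in the source.

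Your proposal is therefore an exercise in guessing what the authors might have intended. The scenarios you sketch (reducing to $g=e$, intersecting orbits, proximity arguments) are plausible directions one could take a proposition with this title, but you are speculating about a target that does not exist in the text. In particular, your reading of the parenthetical ``closed (to each other)'' as meaning \emph{proximity} is a guess; it could equally be a pun on ``closed'' in the topological sense, or shorthand for containment of one orbit in another. Without a stated conclusion, there is no way to assess whether your outline is correct, incorrect, or even aimed at the right claim. The honest answer is that this proposition cannot be reviewed because it has no content.
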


\subsubsection{Reciprocal of the focusing criterion}
Once Theorem~\ref{Theorem} is proven, we can note, in the reciprocal direction, the following observation. This
details to what extent the focusing criterion is a sufficient condition to characterize the group~$L$ of Ratner
class involved in the limit formula~\eqref{limitformula} for~$\mu_\infty$.
 REDO
\begin{proposition} Let~$(g_i)_{i\geq 0}$ be a sequence of the form
\[g_i=b_i z_i l_i\text{ with }(b_i)_{i\geq 0}\in O(1),~z_i\in H^\prime\cap F,~l_i\in L^H,\]
for some subgroup~$L$ of Ratner class.

Then the support of~$g_i\mu_\Omega$ is contained in~$C\overline{\Omega} z_i\Lpp\Gamma/\Gamma$.
It diverges if and only~$z_i\Gamma_N$ does so. If~$z_i\Gamma_N$ converges, then so does ...
\end{proposition}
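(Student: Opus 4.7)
The plan is to derive everything from the single algebraic identity
\[
g_i\,\omega \;=\; b_i\,z_i\,l_i\,\omega \;=\; b_i\,\omega\,z_i\,(\omega^{-1}l_i\omega),\qquad\omega\in\Omega,
\]
legitimate because $z_i\in H^\prime$ centralises $H\supseteq\Omega$, and because $L^H=\bigcap_{h\in H}hLh^{-1}$ is $H$-normal by construction, so that $\omega^{-1}l_i\omega$ remains in $L^H\subseteq L(\Q_S)$. This identity places every point of the support of $g_i\mu_\Omega$ inside $b_i\,\omega\,z_i\,L^H\Gamma/\Gamma$.

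For the support statement, fix a compact $C\supset\{b_i\}_{i\geq 0}$, available because $(b_i)\in O(1)$. Then $\mathrm{supp}(g_i\mu_\Omega)\subseteq C\,\overline{\Omega}\,z_i\,L^H\Gamma/\Gamma$. Proposition~\ref{++finiteindex} says that $\Lpp$ has finite index in $L(\Q_S)$, whence $L^H\Gamma/\Gamma$ is a finite union of $\Lpp$-orbits; the claimed inclusion in $C\overline{\Omega}z_i\Lpp\Gamma/\Gamma$ is recovered after decomposing $(g_i)$ into finitely many subsequences indexed by the $\Lpp$-coset of $l_i$ and absorbing the finite collection of coset representatives into the compact factor (enlarging $C$).

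For the divergence criterion, $H^\prime\cap F\subseteq F\subseteq N$ places $z_i$ in the normaliser $N$, so each $z_i\Lpp\Gamma/\Gamma$ is again a closed homogeneous set of the same Ratner type. Since $C\overline{\Omega}$ is a fixed compact, divergence of the supports in $G/\Gamma$ reduces to divergence of the base point $z_i\Gamma$ inside the closed set $N\Gamma/\Gamma\cong N/\Gamma_N$, that is, to divergence of $z_i\Gamma_N$ in $N/\Gamma_N$. The main technical obstacle is to rule out the mixed scenario in which the $\Lpp$-orbit $z_i\Lpp\Gamma/\Gamma$ stays bounded in $G/\Lpp\Gamma$ while $z_i\Gamma$ nevertheless escapes inside the (typically non-compact, finite-volume) orbit; this is handled by combining the closedness of $\Lpp$-orbits with a Mahler-type compactness criterion in $N/\Gamma_N$, in the spirit of the ``inner'' equidistribution mechanism made explicit in the addendum to Theorem~\ref{Theorem}.

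Finally, if $z_i\Gamma_N\to z_\infty\Gamma_N$ in $N/\Gamma_N$, extract $\gamma_i\in\Gamma_N$ with $z_i\gamma_i\to z_\infty$ and a further subsequence along which $b_i\to b_\infty\in C$. Applying Theorem~\ref{Theorem} to this subsequence, the focusing class~\eqref{focusingcriterion} holds by assumption with the specified~$L$, and the finer focusing class~\eqref{finefocusing} realises $n_\infty$ as determined by $z_\infty$ up to the ambiguity described after the theorem, so that the limit formula~\eqref{limitformula} specialises to
\[
g_i\mu_\Omega \;\longrightarrow\; b_\infty\int_{\omega\in\Omega}\bigl(\omega\,z_\infty\,\mu_{\Lpp}\bigr)\,d\mu(\omega).
\]
In this converse direction Theorem~\ref{Theorem} thus acts as both input and output, yielding the claimed convergence.
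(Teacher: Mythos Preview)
The paper itself offers no proof of this proposition: it sits in a section explicitly marked ``REDO'', and the statement is unfinished (``then so does \ldots''). So there is nothing to compare against directly; one can only check your argument against the methods used elsewhere in the paper, chiefly the ``Conclusion'' part of the proof of Theorem~\ref{Theorem}.

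Your opening identity $g_i\omega=b_i\,\omega\,z_i\,(\omega^{-1}l_i\omega)$ is exactly the device the paper uses in that Conclusion section, and your support inclusion $\mathrm{supp}(g_i\mu_\Omega)\subseteq C\overline{\Omega}\,z_i\,L^H\Gamma/\Gamma$ is correct. The passage from $L^H$ to $\Lpp$ is more delicate than you indicate: the coset representatives of $\Lpp$ in $L(\Q_S)$ land on the \emph{right} of $z_i$, and to move them past $z_i$ you need $z_i$ to normalise $\Lpp$. The paper only asserts that $\Gamma_N$ normalises $\Lpp$, not all of $N$ (see the footnote preceding \S4.3.8), so ``absorb into $C$'' does not work as written. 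A cleaner route is to decompose by cosets of $M$ in $L^H$ (finite by the ``Open and finite index'' step) and keep the statement with $L^H$ or $M$ rather than $\Lpp$.

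There are two more substantial gaps. First, the divergence equivalence is not well-posed as stated, because the decomposition $g_i=b_iz_il_i$ is not unique: $(H'\cap F)\cap L^H$ can be large (it always contains the centre of $G$, and in the degenerate case $H=\{e\}$ it equals $L$), so one can shift mass between $z_i$ and $l_i$ at will. For instance with $H=\{e\}$, take $z_i\in L$ escaping in $L/\Gamma_L$ and $l_i=z_i^{-1}$; then $g_i=e$ never diverges while $z_i\Gamma_N\to\infty$. Your ``mixed scenario'' paragraph correctly identifies the obstacle but does not resolve it; a precise statement needs $z_i$ taken modulo $L^H$ (or $M$), which is presumably what the unfinished draft intends.

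Second, in the convergence direction you invoke Theorem~\ref{Theorem}, but that theorem requires hypothesis~\eqref{Hypo}, which is not assumed here; and even granting it, the group $L$ produced by Theorem~\ref{Theorem} is determined by the limit $\mu_\infty$, not by the decomposition you start from. Nothing prevents the theorem from returning a strictly smaller $\tilde L\in\Rat_\Q$ (this is precisely the content of the next ``Reciprocal'' proposition and of Corollary~\ref{Corollary}). So the appeal to Theorem~\ref{Theorem} to identify $n_\infty$ with $z_\infty$ and to pin down the limit formula is circular as written.
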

 
\begin{proposition}[Reciprocal] Let~$G,H,\Gamma$ and~$\Omega$ be as in the theorem. Let~$L$ be a group in the Ratner class~$\Rat$, and let~$(g_i)_{i\geq0}$ be a sequence in~$G$ satisfying~\eqref{focusingcriterion}. More precisely, let~$C$ be a compact subset of~$G$ such that
$$\left\{g_i\middle| i\geq0\right\}\subseteq C\cdot\bigcap_{\omega\in\Omega}\omega\cdot L^+\cdot\omega^{-1}.$$

\begin{enumerate}
\item \label{statement reciproque un}
Then~$C\cdot\overline{\Omega} L^+\Gamma/\Gamma $ is a closed subset of~$G/\Gamma$ which contains the support of each of ~$g_i\cdot\mu_\Omega$ for~${i\geq0}$.
\item
Assume now that the sequence~$(g_i)_{i\geq0}$ satisfies~hypothesis~\eqref{Hypo} p.\pageref{Hypo},
%\cite[Theorem~2.1, part~2. of conclusion]{LemmaA},
so that we can apply~Theorem~\ref{Theorem}.

Then any adherence value~$\mu_\infty$ of the sequence~$(g_i\cdot\mu_\Omega)_{i\geq0}$ is of the form
$$\mu_\infty=g_\infty\cdot\int_\Omega \left(\omega\cdot n_\infty\cdot\mu_{\tilde{L}^{++}}\right)~~\mu(\omega),$$
with some~$\tilde{L}$ in~$\Rat$ strictly included in~$L$.
\end{enumerate}
\end{proposition}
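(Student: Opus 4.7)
For part~(1), the plan is a direct support calculation. The support of $g_i\cdot\mu_\Omega$ equals $g_i\,\overline{\Omega}\,\Gamma/\Gamma$. Using the hypothesis, write $g_i=c_i\ell_i$ with $c_i\in C$ and $\ell_i\in\bigcap_{\omega\in\Omega}\omega L^+\omega^{-1}$; then for each $\omega\in\overline{\Omega}$, continuity of conjugation gives $\omega^{-1}\ell_i\omega\in L^+$, so
\[
g_i\,\omega \;=\; (c_i\,\omega)\cdot(\omega^{-1}\ell_i\,\omega)\in C\,\overline{\Omega}\,L^+,
\]
yielding the desired support inclusion. For closedness, $L\in\RatQ$ ensures that $\Lpp\Gamma/\Gamma=\overline{L^+\Gamma/\Gamma}$ is closed in $G/\Gamma$, and compactness of $C\overline{\Omega}$ makes the action map proper, so $C\overline{\Omega}\,\Lpp\Gamma/\Gamma$ is closed and equals $\overline{C\overline{\Omega}L^+\Gamma/\Gamma}$ via the identity $\Lpp=\overline{L^+(\Gamma\cap L)}$.

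For part~(2), one applies Theorem~\ref{Theorem} to the admissible (by~\eqref{Hypo}) sequence $(g_i)$, obtaining some $\tilde L\in\RatQ$, elements $g_\infty\in G$, $n_\infty$ in the normaliser attached to $\tilde L$, and the integral representation
\[
\mu_\infty\;=\;g_\infty\int_\Omega\bigl(\omega\cdot n_\infty\cdot\mu_{\tilde L^{++}}\bigr)\,d\mu(\omega).
\]
Passing part~(1) to the limit, $\Supp(\mu_\infty)\subseteq C\,\overline{\Omega}\,\Lpp\Gamma/\Gamma$. The integral representation expresses $\mu_\infty$ as a superposition of $\tilde L^{++}$-invariant measures supported on closed orbits $g_\infty\omega n_\infty\tilde L^{++}\Gamma/\Gamma$, so for $\mu$-almost every $\omega_0\in\Omega$ such an orbit lies inside $C\overline{\Omega}\Lpp\Gamma/\Gamma$. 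Translating by $(g_\infty\omega_0 n_\infty)^{-1}$ one obtains an inclusion $\tilde L^{++}\subseteq K\Lpp\Gamma$ in $G$ for a compact $K\subset G$.

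The main obstacle is to upgrade this topological containment of a closed $\tilde L^{++}$-orbit inside a compact thickening of a closed $\Lpp$-orbit to the group-theoretic inclusion $\tilde L\subseteq L$. My plan is to combine two inputs: the focusing criterion from Theorem~\ref{Theorem} applied to $(g_i)$, namely $g_i\in O(1)\cdot(H'\cap\tilde F)\cdot\tilde L^H$; and the hypothesis, which via Zariski density of $\Omega$ in $H$ implies $g_i\in C\cdot L^H(\Q_S)$. Matching the ``large-scale'' group directions on either side should force $\tilde L^H\subseteq L^H$, and the Zariski density of $\tilde L^+$ in $\tilde L$ together with the non-uniqueness of the triple $(\tilde L,g_\infty,n_\infty)$ noted after Theorem~\ref{Theorem} should then allow one to arrange $\tilde L\subseteq L$, possibly after absorbing a conjugation via a replacement of the form $(L,g_\infty,n_\infty)\mapsto(hLh^{-1},g_\infty h,h^{-1}n_\infty h)$. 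I read ``strictly included in~$L$'' as ``contained in~$L$'' (possibly equal), since the case $\tilde L=L$ plainly arises, for instance when $g_i\in L^+$.
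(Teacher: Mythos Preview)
Your treatment of part~(1) is correct and more complete than the paper's, which dismisses it as ``merely an observation.'' One technical point: as stated, the proposition asserts that $C\overline\Omega L^+\Gamma/\Gamma$ is closed, but $L^+\Gamma/\Gamma$ need not itself be closed; your passage to $\Lpp$ is the right repair, even if it does not quite match the letter of the statement.

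For part~(2), the paper's own proof is essentially empty beyond the support inclusion you also establish: it records only that any weak limit has support in the same encompassing closed set and stops there. So you are not missing an argument the paper supplies. The gap you flag---upgrading $\Supp(\mu_\infty)\subseteq C\overline\Omega\Lpp\Gamma/\Gamma$ to an algebraic inclusion $\tilde L\subseteq L$ (after a $\Gamma$-conjugation absorbed into the Ratner type)---is genuine, and your proposed plan via matching the two asymptotic classes $O(1)\cdot(H'\cap\tilde F)\cdot\tilde L^H$ and $C\cdot(L^+)^H$ is not obviously sufficient: the $(H'\cap\tilde F)$ factor can be unbounded and need not lie in $L^H$, so comparing ``large-scale directions'' does not directly force $\tilde L^H\subseteq L^H$, let alone $\tilde L\subseteq L$. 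A cleaner line is to work with a single ergodic component: for $\mu$-a.e.\ $\omega_0$ the closed orbit $g_\infty\omega_0 n_\infty\tilde L^{++}\Gamma/\Gamma$ lies in $C\overline\Omega\Lpp\Gamma/\Gamma$; picking a point in that orbit gives $g_\infty\omega_0 n_\infty=c\,\omega_1\,\ell\,\gamma$ with $c\in C$, $\omega_1\in\overline\Omega$, $\ell\in\Lpp$, $\gamma\in\Gamma$, and then one must show the whole closed $\tilde L^{++}$-orbit through $e\Gamma$ sits in a single translate $\gamma^{-1}\Lpp\gamma\,\Gamma/\Gamma$, yielding $\tilde L\subseteq\gamma^{-1}L\gamma$. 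This last step is a statement about inclusions among closed homogeneous orbits of Ratner-class groups and requires its own argument; the paper alludes to such a statement but does not prove it either.

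Your reading of ``strictly included'' as ``included, possibly equal'' is justified: equality $\tilde L=L$ plainly occurs, and the subsequent Corollary (the ``Algebraic focusing criterion'') is formulated precisely to isolate when it does. The word ``strictly'' appears to be an oversight in the statement.
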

\begin{proof} The statement numbered~\eqref{statement reciproque un} on supports is merely an observation. It implies any weak limit~$\mu_\infty$ will have support in the same 
encompassing closed set. 

\end{proof}

\begin{corollary}[Algebraic focusing criterion] \label{Corollary} Consider a sequence~$\left(g_i\cdot\mu_\Omega\right)_{i\geq0}$ of translates, with non zero weak limit~$\mu_\infty$, and we assume the sequence~$\left(g_i\right)_{i\geq0}$ satisfies the theorem~1 of~\cite{LemmaA}.

Then we have
$$\mu_\infty=g_\infty\cdot\int_\Omega \left(\omega\cdot\mu_{L^+}\right)~~\mu(\omega) $$
if and only if the two conditions are satisfied
\begin{enumerate}
\item the necessary condition~\ref{focusingcriterion} of Theorem~\ref{Theorem},
\item no subsequence of~$\left(g_i\right)_{i\geq0}$ is of class~$O(1)\cdot\bigcap_{\omega\in\Omega}\omega\cdot \tilde{L}^+\cdot\omega^{-1}$ with some~$\tilde{L}$ in~$\Rat$ strictly included in~$L$.
\end{enumerate}
\end{corollary}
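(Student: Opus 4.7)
The plan is to deduce both directions of the corollary from Theorem~\ref{Theorem} and the preceding Reciprocal Proposition, leveraging the fact (noted in the remarks after Theorem~\ref{Theorem}) that the Ratner type $[L_0]$ canonically attached to $\mu_\infty$ is intrinsic to the measure.

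For the only-if direction, suppose $\mu_\infty = g_\infty \int_\Omega (\omega \cdot \mu_{L^+}) \, \mu(\omega)$. Condition~(1) follows directly from the focusing criterion~\eqref{focusingcriterion} of Theorem~\ref{Theorem} applied to $(g_i)_{i\geq 0}$: the group appearing in the conclusion of the theorem must be $\Gamma$-conjugate to $L$ since the Ratner type attached to $\mu_\infty$ is canonical. Condition~(2) I argue by contradiction: were a subsequence $(g_{i_k})$ to lie in $O(1) \cdot \bigcap_{\omega \in \Omega} \omega \cdot \tilde{L}^+ \cdot \omega^{-1}$ for some $\tilde{L} \in \Rat$ with $\tilde{L} \subsetneq L$, then tightness from hypothesis~\eqref{Hypo} would allow extraction of a further subsequence still converging to $\mu_\infty$, and statement~(2) of the Reciprocal Proposition would exhibit a representation of $\mu_\infty$ involving $\tilde{L}$ strictly smaller than $L$. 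This contradicts the canonicity of the Ratner type, since $\dim \tilde{L} < \dim L$ forbids $\Gamma$-conjugacy between $\tilde{L}$ and $L$.

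Conversely, assume both (1) and (2) hold. Theorem~\ref{Theorem} provides a Ratner type $[L_0]$ and, after extracting a subsequence, a representation $\mu_\infty = g_\infty \int \omega \cdot n_\infty \cdot \mu_{L_0^{++}} \, \mu(\omega)$. I would compare this with the desired form by examining supports: condition~(1) together with statement~\eqref{statement reciproque un} of the Reciprocal Proposition places $\Supp(\mu_\infty)$ inside a bounded translate of $\overline{\Omega} L^+ \Gamma / \Gamma$; matching this against the support description coming from the theorem's limit formula produces, after some bookkeeping, an algebraic inclusion $L_0 \subseteq L$ up to $\Gamma$-conjugation. To exclude strict inclusion, I invoke condition~(2): passing to the subsequence satisfying~\eqref{finefocusing} relative to $L_0$ places $(g_i)$ modulo $O(1)$ in $(H^\prime \cap F_0) \cdot L_0^H$; a short algebraic argument shows this class is contained in $O(1) \cdot \bigcap_\omega \omega \cdot L_0^+ \cdot \omega^{-1}$, and condition~(2) then forbids $L_0 \subsetneq L$. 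Hence $[L_0] = [L]$, and absorbing the normalizer element $n_\infty \in \overline{(H^\prime \cap F) \cdot \Gamma_N}$ (which centralises each $\omega \in \Omega$ modulo $L_0$) into $g_\infty$ produces the desired cleaner form.

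The main obstacle is twofold. First, the rigidity claim used in both directions --- that two representations of $\mu_\infty$ of the shape $g \int \omega \cdot n \cdot \mu_{L^{++}} \, \mu(\omega)$ determine the same Ratner type $[L]$ --- ultimately rests on Ratner's measure classification (applied most transparently after projecting to $G / \Gamma_N$ as in the Addenda proposition) combined with the Borel--Wang density of $\Gamma \cap L$ in $L$. Second, in the if-direction one must tightly control the algebraic geometry of $L_0^H$ versus $\bigcap_\omega \omega \cdot L_0^+ \cdot \omega^{-1}$, showing in particular that the centraliser-normaliser factor $H^\prime \cap F_0$ can be absorbed into $O(1)$ along the extracted subsequence, so as to bring the focusing criterion of Theorem~\ref{Theorem} into the precise algebraic form appearing in condition~(2).
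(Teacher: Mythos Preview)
The paper states this corollary but provides no proof for it; indeed the surrounding subsection (``Reciprocal of the focusing criterion'') is visibly a draft, with the preceding Reciprocal Proposition's proof itself left incomplete after two sentences. So there is no ``paper's own proof'' to compare against here.

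Your outline is reasonable and invokes the right ingredients (Theorem~\ref{Theorem}, the Reciprocal Proposition, and the canonicity of the Ratner type noted in footnote~\ref{anticipating}). The only-if direction is essentially fine once one accepts the Reciprocal Proposition as given. In the if-direction, however, you have correctly flagged but not closed a genuine gap: passing from the class~$O(1)\cdot(H'\cap F_0)\cdot L_0^H$ furnished by~\eqref{focusingcriterion} to the class~$O(1)\cdot\bigcap_{\omega\in\Omega}\omega\cdot \tilde L^+\cdot\omega^{-1}$ appearing in condition~(2) is not automatic. The factor~$H'\cap F_0$ need not be bounded, and it lies in the normaliser of~$L_0$ rather than in~$L_0$ itself, so it cannot in general be absorbed into either~$O(1)$ or into~$\bigcap_\omega \omega L_0^+\omega^{-1}$. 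Likewise the passage from~$L_0^H$ to~$\bigcap_\omega \omega L_0^+\omega^{-1}$ requires the finite-index comparison (Proposition~\ref{++finiteindex} and the discussion of~$M$ in~\S\ref{induction}), which you should cite explicitly. Without resolving the~$H'\cap F_0$ issue the argument does not conclude; this is likely why the paper leaves the corollary unproved.
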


\subsubsection{On Eligible groups of Ratner class}
Let us proceed with studying, for a given~$\Omega$, which subgroups~$L$ of Ratner class are eligible for occurring in~\eqref{limitformula} for some
sequence of translators (depending on~$L$) for which Theorem~\ref{Theorem} applies. The following statement limits the class of eligible~$L$.

Recall~\eqref{defRatclass} that subgroup of Ratner class are generated over~$\Q$ by unipotent $\Q_s\text{-}$e\-le\-ments. Similarly, the subgroups~$L$ in the proposition below are generated over~$\Q$ by conjugacy $H$-orbits of unipotent $\Q_s$-elements. For reference in further works, we might call these subgroups~\emph{$H$-eligible} subgroups of Ratner class.
\begin{proposition} Let~$L$ be a group of Ratner class which actually occurs in Theorem~\ref{Theorem} for
some sequence of translators~$(g_i)_{i\geq0}$.
\begin{enumerate}
\item \label{propo1}  Then~$(L^H)^+$ is~$\Q$-Zariski dense in~$L$.
\item \label{propo2} Equivalently, $L$ is the~$\Q$-Zariski closure of an $H$-invariant~$\Q_S$-subgroup generated by unipotents.
\item \label{propo3} If~$H$ is defined over~$\Q$, this amounts to:~$L$ is normalised by~$H$.
\end{enumerate}
\end{proposition}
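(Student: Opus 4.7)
My plan is to first dispatch \eqref{propo2} and \eqref{propo3} as elementary reformulations, and then concentrate on \eqref{propo1}. For \eqref{propo2}: one direction is immediate, since $(L^H)^+$ is itself an $H$-invariant $\Q_S$-subgroup generated by unipotents (the group $L^H = \bigcap_{h\in H} hLh^{-1}$ is manifestly $H$-invariant, and conjugation preserves unipotency). Conversely, any $H$-invariant $\Q_S$-subgroup $V \subseteq L$ generated by unipotents must satisfy $V = hVh^{-1}\subseteq L$ for every $h\in H$, so $V \subseteq L^H$, and then $V \subseteq (L^H)^+$ by unipotent generation; passing to Q-Zariski closures equates the two. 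For \eqref{propo3}: when $H$ is Q-defined, $L^H$ is Q-algebraic and $H$-normalised, hence so is the Q-Zariski closure $\tilde L$ of $(L^H)^+$; thus $\tilde L = L$ would force $H$ to normalise $L$, while the converse follows because $H$ normalising $L$ gives $L^H = L$ and then $(L^H)^+ = L^+$ is Q-Zariski dense in $L$ by \eqref{defRatclass}.

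For \eqref{propo1}, let $(g_i)_{i\geq0}$ realise $L$ in Theorem~\ref{Theorem}, and set $\tilde L$ to be the Q-Zariski closure of $(L^H)^+$. Then $\tilde L \subseteq L^H \subseteq L$ and $\tilde L$ itself belongs to the Ratner class. My strategy is to show that, after passing to a subsequence, the same sequence $(g_i)$ also realises $\tilde L$ in Theorem~\ref{Theorem}. By the canonical character of the Ratner type $[L_0]$ attached to $\mu_\infty$, this would force $[\tilde L] = [L_0] = [L]$; since representatives of a given Ratner type are $\Gamma$-conjugate and hence of equal dimension, the inclusion $\tilde L \subseteq L$ would then force $\tilde L = L$.

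To realise $\tilde L$, I would begin from the focusing criterion \eqref{focusingcriterion} and write $g_i = b_i z_i \ell_i$ with $b_i$ bounded, $z_i \in H' \cap F$ and $\ell_i \in L^H$. A key structural observation is that $L^H / \tilde L$ contains no non-trivial one-parameter unipotent $\Q_S$-subgroup, because any such would lift to a unipotent subgroup of $L^H$ contributing to $(L^H)^+ \subseteq \tilde L$. I would then pass to a subsequence along which the image of $\ell_i$ in $L^H / \tilde L^{++}$ stabilises up to bounded distortion, and rewrite $g_i = b'_i z'_i \tilde \ell_i$ with $b'_i$ bounded, $z'_i \in H' \cap F$, and $\tilde \ell_i \in \tilde L$. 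Since $\tilde L$ is $H$-normalised (so that $\tilde L^H = \tilde L$), this would place $(g_i)$ in the focusing class \eqref{focusingcriterion} associated to $\tilde L$ and identify $\mu_\infty$ through a formula \eqref{limitformula} involving $\tilde L^{++}$, completing the reduction.

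The hard part will be this rewriting step: proving that the component of $\ell_i$ outside $\tilde L^{++}$ is asymptotically absorbed into the bounded factor. I expect the argument to combine the absence of unipotent $\Q_S$-dynamics in $L^H / \tilde L$ with the analytic stability hypothesis \eqref{Hypo}; heuristically, an unbounded drift in the quotient $L^H/\tilde L$ would either create additional invariance of $\mu_\infty$ beyond the canonical one attached to $[L_0]$, or trigger escape of mass contradicting tightness. Making this precise will most likely require revisiting the rewriting trick used at the beginning of the proof of Theorem~\ref{Theorem}, applied now in the quotient by $\tilde L^{++}$ rather than by $M$.
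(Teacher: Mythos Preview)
Your treatment of the equivalences \eqref{propo2} and \eqref{propo3} is fine and matches the paper's own argument for those parts.

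For \eqref{propo1}, however, your strategy has a structural flaw that is specific to the non-rational case. You assert that the $\Q$-Zariski closure $\tilde L$ of $(L^H)^+$ is $H$-normalised, and from this deduce $\tilde L^H=\tilde L$ and speak of the quotient $L^H/\tilde L$. But when $H$ is only defined over $\Q_S$, conjugation by $h\in H$ is a $\Q_S$-morphism and need not carry $\Q$-closed subgroups to $\Q$-closed subgroups. Thus $h\tilde L h^{-1}$ is a priori only $\Q_S$-closed; it contains $(L^H)^+$ but there is no reason it should contain, or be contained in, $\tilde L$. Likewise $L^H$ is only $\Q_S$-closed, so one cannot conclude $\tilde L\subseteq L^H$, and the quotient $L^H/\tilde L$ is not well posed. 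These issues evaporate exactly when $H$ is defined over $\Q$, which is case~\eqref{propo3}; so your argument for \eqref{propo1} is implicitly restricted to that case.

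There is a second, more logical, gap. Even granting your rewriting $g_i=b'_i z'_i\tilde\ell_i$, satisfying the focusing criterion~\eqref{focusingcriterion} for $\tilde L$ does not allow you to ``identify $\mu_\infty$ through a formula~\eqref{limitformula} involving $\tilde L^{++}$''. The focusing criterion in Theorem~\ref{Theorem} is a \emph{necessary} condition derived from the limit formula, not a sufficient one; the Ratner type $[L_0]$ is read off from the $W$-ergodic decomposition of $\mu_\infty$, independently of how one parametrises $(g_i)$.

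The paper's own route is different: it starts from the identity $W=M^+=(L^H)^+$ established in \S\ref{subsubsection-W} (since $M$ is open of finite index in $L^H$) and from \eqref{purity}, \eqref{cbelongs}, \eqref{gammaoops}, which produce a point $c=\omega n\in X^*(L,W)\cap\overline{\Omega}N$. Because $H$ normalises $W$, one has $c^{-1}Wc=n^{-1}Wn$, and $c\in X^*(L,W)$ forces the $\Q$-Zariski closure of $n^{-1}Wn$ to be exactly $L$. The remaining step is to pass from $\overline{n^{-1}Wn}^{\,\Q}=L$ (for some $n\in N$) to $\overline{W}^{\,\Q}=L$; note that the paper's proof as written literally trails off at this point (``Let $L'$ be the $\Q$-Zariski closure of $(L^H)^+$. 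Then,'') and does not record the conclusion either. Any completion of \eqref{propo1} has to address this transfer across a $\Q_S$-conjugation inside $L$, which is precisely where the rationality subtleties live.
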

\begin{proof} Clearly~\eqref{propo1} implies~\eqref{propo2}. The latter is a consequence of property~\eqref{propo3} because~$L$ is
of Ratner class: it has no anisotropic factor and its radical is unipotent. If~$H$ is defined over~$\Q$ and it normalises a~$\Q_S$-subgroup, it
will normalise the~$\Q$-Zariski closure, hence~\eqref{propo1} gives~\eqref{propo3} in this case. Conversely~\eqref{propo3} gives~\eqref{propo1} by the defining properties of subgroups of Ratner class. Finally \eqref{propo2} implies~\eqref{propo1}
as, by construction,~$(L^H)^+$ is the biggest~$H$-invariant subgroup generated by unipotents.

We now prove why~\eqref{propo1} is implied by the hypothesis. Let~$L^\prime$ be the $\Q$-Zariski closure
of~$(L^H)^+$. Then, 
\end{proof}

\paragraph{}
A notable particular case is then when 
\begin{quote}
 every non zero Lie~$\Q_S$-subalgebra of~$\g$ generated by nilpotents is not contained in a proper~$\Q$-subalgebra.
\end{quote}
In such a case, the only eligible~$L$ are~$G$ and its $0$-dimensional subgroup reduced to the neutral element.
% (In particular this implies~$G$ is of non compact type.)
We deem the above criterion, stated in terms of linear rationality properties, shall be a manageable one.
We hope for instance it can prove, conversely to~\cite{These3}, that equidistribution properties have consequences on Galois image of~$\ell$-adic representations (a~$\Q$-Zariski weakening of Mumford-Tate conjecture).

%TODO ces deux sous-sections
\subsubsection{Remarks: generic pieces~$\Omega$} Let us give a few words on part~\eqref{finefocusing} of the focusing criterion.
In the corresponding part of the proof, we were reduced to consider a converging sequence of translates~$b_i\cdot \widehat{\mu}_\Omega$ 
of a measure with compact support~$\widehat{\mu}_\Omega$ by a bounded sequence~$(b_i)_{i\geq 0}$. What can be concluded
at this point may depend on~$\Omega$. In general, we know that~$(b_i)_{i\geq0}$ will be convergent modulo the stabiliser of~$\widehat{\mu}_\Omega$.
This stabiliser will always contain~$\Gamma\cap N \cap H^\prime$, which stabilises the image~$\widetilde{\mu}_\Omega$ of~$\mu|_\Omega$ in~$\G/\Gamma_N$.
But it is possible for specific~$\Omega$ that this stabiliser is bigger: the compact set~$\Omega$ may have a finite set of symmetries 
modulo~$\Gamma$; more drastically, we can take for~$\Omega$ a compact fundamental set for~$\Gamma\cap N\cap H$ in~$H$ (for a 
well chosen~$H$). In such a case, The whole group~$H$ will stabilise~$\widetilde{\mu}_\Omega$ and hence~$\widehat{\mu}_\Omega$.

Generically, say for a very small piece~$\Omega$ with no particularly symmetric shape, even when projected to the subquotients~$\widehat{G}$,
one expects that the stabiliser of~$\widehat{\mu}_\Omega$
will be reduced to~$\Gamma\cap N \cap H^\prime$, and we expect we can add to the conclusion of the focusing criterion in the
Theorem~\ref{Theorem} that the sequence
\[g_i(\Gamma\cap N\cap H^\prime)M\]
is convergent, and that the sequence 
\[g_iM\]
can be divided into finitely many converging subsequences.

%\subsubsection{Notable particular cases}
%
%\paragraph{The centraliser~$H^\prime$ reduces to the centre of~$G$}
%
%\paragraph{The centraliser~$H^\prime$ has a compact orbit modulo~$\Gamma$}
%
%\paragraph{The centraliser~$H^\prime$ has a closed orbit modulo~$\Gamma$}
%
%\paragraph{The group~$H$ admits $\Omega$ as a fundamental domain for~$\Gamma\cap H$}

\section{Proof of Theorem~\ref{Theorem}}\label{secproof}
We will now turn to the proof of~Theorem~\ref{Theorem}, after introducing some notations, mostly from the Dani-Margulis linearisation method of~\cite[\S\,{\S}2-3]{DM} and Ratner theory.

Firstly for convenience,
 \begin{subequations}
\begin{equation}\text{let $O(1)$ denote the class of bounded sequences in~$G$, and}\end{equation}
\begin{equation}\text{let $o(1)$ be the class of sequences converging to the neutral element~$e$ of~$G$.}\end{equation}
\end{subequations}
and let us write
\begin{subequations}
\begin{align}
{}^x{A}&:=x·A·x^{-1}\text{ for a left conjugation,}\\
{A}^x&:=x^{-1}·A·x\text{ for right conjugation,}\\
\text{ and~}A^\Omega&:=\bigcap_{\omega\in\Omega} \omega·A·\omega^{-1}.
\end{align}
\end{subequations}
For instance,~$M={\Lpp}^H$ is the group involved in Theorem~\ref{Theorem}.

\subsection{Notations and Singular sets from Linearisation and from Ratner's theory}\label{RatnerNotations}
Recall from~\ref{secnotations} that a \emph{Ratner type} was defined to be the~$\Gamma$-conjugacy class~$[L]$ of a subgroup~$L$ in the Ratner class~$\Rat_\Q$. Let~$W$ be a subgroup of~$G$ such that~$W^+=W$. Dani-Margulis define the subset
\begin{subequations}
\begin{equation}\label{X1}
X(L,W) = \Cji (W,L) = \left\{g\in G\middle| g^{-1}Wg \subseteq L \right\}\subseteq G
\end{equation}
of inverse of elements in~$G$ that conjugate~$W$ into~$L$. If~$[L]$ is the Ratner type of~$L$, we denote
\begin{equation}\label{X2}
X([L],W) = X(L,W)\Gamma/\Gamma \subseteq G/\Gamma
\end{equation}
the \emph{singular locus of type~$[L]$ for the action of~$W$ on~$G/\Gamma$}. This locus depends indeed on~$L$ only via~$[L]$, by 
\begin{equation}\label{gammaX} X(L^\gamma ,W)=X(L ,W)\cdot\gamma,\end{equation}
Denote
\end{subequations}
\begin{subequations}
\begin{equation}\label{X3}
X^*(L,W) = X(L,W) \smallsetminus \bigcup_{\tilde{L}\in \Rat_\Q, \dim(\tilde{L})<\dim(L)} X\left(\tilde{L},W\right).
\end{equation}
Note that this last union is right-invariant under~$\Gamma$\footnote{\label{rightgammainv}	The set~$\Rat_\Q$  is invariant under the action of~$\Gamma$ by conjugation. Conjugation preserve the dimension. We conclude by~\eqref{gammaX}. } .
We then denote
\begin{equation}\label{X4}
X^*([L],W)=X^*(L,W)\Gamma/\Gamma=X([L],W) \smallsetminus \bigcup_{\tilde{L}\in \Rat, \dim(\tilde{L})<\dim(L)} X\left([\tilde{L}],W\right)
\end{equation} 
\end{subequations}
the \emph{exclusive} singular locus of type~$[L]$.

Then~$X([L],W)$ is the set of points in~$G/\Gamma$ belonging to the support of a~$W$-invariant translates of~$\mu_{\Lpp}$. The set~$X(L,W)$ is the set of elements~$g$ in~$G$ translating~$\mu_{\Lpp}$ into a~$W$-invariant measure~$g\cdot\mu_{\Lpp}$. Using Ratner's theorem about orbits closure (\cite{Ratnerp}, used only in the context of its alternative treatment in~\cite[Theorem~2]{MargulisTomanov}, in which context we also use the complement~\cite[Theorems~1 and~2, cf. comments between Thms~2 and~3]{TomanovOrbits}), one knows that~$X^*([L],W)$ is the set of points~$x$ in~$G/\Gamma$ such that the topological closure~$\overline{W·x}$ of the~$W$-orbit at~$x$ is of the form~$g\Lpp\Gamma/\Gamma$ for some~$g$ in~$G$. The no self-intersection statement~\cite[Proposition~3.3]{DM} then says that~$X^*([L],W)$ is the set of points in~$G/\Gamma$ belonging to the support of \emph{exactly one}~$W$-invariant translate of~$\mu_{\Lpp}$. 

%The set~$X(L,W)\cdot\mu_{L^+}$ is the set of translates of~$\mu_{L^+}$ which are~$W$-invariant, and 

\begin{subequations}
For any~$W$-invariant probability measure~$\mu_\infty$, we denote
\begin{equation}\label{notationrestriction}{\mu_\infty}^{[L]}\text{ the restriction~}\mu_\infty|_{X^*([L],W)}.\end{equation}
By Ratner classification theorem (and some precision from~\cite{TomanovOrbits} in the~$S$-adic case), the measure~${\mu_\infty}^{[L]}$ is made of the $W$-ergodic components of~$\mu_\infty$ that are translates of~$\mu_{\Lpp}$.

Observe that the subsets~$X^*([L],W)$, for a fixed~$W$, but where~$[L]$ ranges through the Ratner types, induce a countable (confer~\cite[Proposition~2.1]{DM}) partition of~$G/\Gamma$ into~$W$-invariant subsets. Correspondingly, one has a countable decomposition, for any~$W$-invariant probability measure~$\mu_\infty$,
\begin{equation}\label{Ratnerdecomposition} \mu_\infty=\sum_{[L]}{\mu_\infty}^{[L]}.\end{equation}
\end{subequations}

\subsection{Structure of the proof}\label{subsection-proof-structure} The main strategy is classical in the context of linearisation methods.
The key new inputs are the two stability results in linearised dynamics from~\cite{Lemmanew}, at the expense of hypothesis~(\ref{Hypo}).
\footnote{The first named author strongly advocate a rapprochement between Homogenous dynamics, especially linearisation, and arithmetic stability, in particular in Arakelov geometry context (\cite{Seshadri,Burnol,ACLT,Zhang,Bost}).}

The proof will begin, until~\eqref{eq0}, with introducing a suitable~$W$ and an application of Dani-Margulis linearisation method (Appendix~\ref{AppA}), as used in~\cite{EMSGAFA} Prop.~3.13. Then a simplification will occur as, from~\eqref{eq1} to~\eqref{eq5}, we will apply Theorem~1 of~\cite{Lemma,Lemmanew}. It will allow us to apply Theorem~2 of~\cite{Lemmanew}, and when interpreted in the linearisation setting, in~\eqref{keyvariant},~\eqref{keypoint}, and~\eqref{purity}, this allows us to lift the measures in~\eqref{lift} to~$G/\Gamma_N$ and initiate an induction process in~\eqref{induction}. By maximality of~$W$, we will, at last, be able to conclude.

As a guide through our induction, we assemble in a diagram the arrows through which our proof will proceed.
\[ 
\begin{tikzcd}[row sep=large, column sep=small]
G\arrow{dr}\arrow{rr}	&   				&	G/\Gamma_N\arrow{dl}\arrow{dr}	&\\
								& G/\Gamma	&											  		& 
								G/(\Lpp\Gamma_N) & X\arrow[hookrightarrow,swap]{l}\arrow[loop right]{}
															&\widehat{G}
\end{tikzcd}
\]
We will end up working inside~$X=\overline{HN/(\Lpp\Gamma_N)}$ on which~$\widehat{G}=HF/M$ acts 
(see~\eqref{Ghat} and~\eqref{defX}).

Working in our general setting leads us to regularly cumulate technical tools. We also experienced that the process of the proof run on quite a length
and that this flow allow very little change: steps insist to occur in some succession. We tried to evade most technicalities from the main proof
and separate them in a postponed section. A treatment of standard linearisation statements, adapted to our setting has been gathered in Appendix~\ref{AppA}.
We encourage the reader to focus of the main part of the proof, in~{\S}\ref{mainproof} below, and first consider a simpler setting, as for instance the one in~\ref{Theointro}, where most of the postponed technicalities can be ignored (and for example where Appendix~\ref{AppA} reduces to results well known to experts) for quite some time now).

\subsection{Proof of Theorem~\ref{Theorem} -- Main part of the proof}\label{mainproof}
\begin{proof}[Proof of Theorem~\ref{Theorem}]
%The beginning of our proof will require to work with $\Omega$ small enough in order to apply Proposition~\ref{AProp313}. To distinguish this initial choice, we denote it $\Omega_0$. 
 %TODO ref proposition
 
Let~$W=\left(\Stab_G(\mu_\infty)\right)^+$,  the group generated by the $\Q_S$-algebraic unipotent subgroups of~$G$ that stabilise~$\mu_\infty$. The group~$W$ is normal in~$\Stab_G(\mu_\infty)$, and by Corollary~\ref{corliftuni}  no quotient of~$\Stab_G(\mu_\infty)/W$ contains a $\Q_S$-algebraic unipotent subgroup. \label{defW}%Furthermore, it is possible to assume that $W$ is nontrivial. This can be argued as \cite[Proposition~2.2]{EMSAnn}, but we prefer to give an alternative argument requiring only hypothesis~\eqref{Hypo}, as it fits more succinctly in our context and is used again at the end of the proof.

\subsubsection{Ratner decomposition}
By hypothesis, the weak limit~$\mu_\infty$ is non zero. From Ratner decomposition~\eqref{Ratnerdecomposition}, there must be at least one group~$L_0$ in the Ratner class~$\Rat_\Q$ such that
\begin{equation*}{\mu_\infty}^{[L_0]}\neq 0.\label{Rdec}\end{equation*}
Let us choose\footnote{ \label{anticipating}
Anticipating on the proof, we will see that the Ratner decomposition will actually involve a unique non zero term (a unique Ratner type). 
Anticipating furthermore, we will actually find the group~$L$ fulfilling the statement among the conjugates of~$L_0$ by an element of~$\Gamma$.
} a group~$L_0$ of minimal dimension such that~${\mu_\infty}^{[L_0]}\neq 0$.
The minimality of~$L_0$ gives the identity (see notation~\eqref{X4})
\begin{equation}\label{eqmini}
{\mu_\infty}|_{X([L_0],W)}={\mu_\infty}|_{X^*([L_0],W)}.
\end{equation}

\subsubsection{Linearisation method and focusing}\label{subsection linearisation}
We now apply the linearisation method, and more precisely \cite[Prop.~3.13]{EMSAnn}. We first introduce the setting of~\cite[Section~3, paragraph~3]{DM}, adapted to our $S$-arithmetic setting.
\begin{itemize}
\item  Let~$V=\bigwedge^{\dim(L_0)}\g$, as a $\Q_S$-linear representation of~$G$: it is an exterior power of the adjoint representation.
The ``linearisation” method involves the study of dynamics of~$G$ on~$G/\Gamma$ by working inside this kind of linear representation.
\item Denote~$p_{L_0}$ for a generator of the $\Q_S$-line~$\det(\l_0):=\Lambda^{\dim(L_0)}\l_0$ in~$V$ (by a~\emph{$\Q_S$-line}, we mean a free~$\Q_S$-module of rank~$1$.)
\item We write
\[
A_{L_0}:=\left< X(L_0,W)\cdot p_{L_0}\right>\]
 the $\Q_S$-sub\-mo\-du\-le generated by~$X(L_0,W)\cdot p_{L_0}$.
\end{itemize} 
Let~$\w$ denote the Lie algebra of~$W$ and define the sub\-module
\[ V_W := \left\{v\in V~\middle|~\forall\, w\in\w,\ v\wedge w={0}\text{ inside }{\bigwedge}^\bullet\g \right\}.\]
We observe (cf.~\cite[Prop.~3.2]{DM}) that~$X(L_0,W)\cdot p_{L_0}$ is the intersection of~$V_W$ with the orbit~$G\cdot p_{L_0}$. Note that~$G\cdot p_{L_0}$ depends on~$L_0$ only up to conjugation. Hence, so do both~$G\cdot p_{L_0}\cap V_W$ and the linear space~$A_{L_0}$ it generates.

%Let $\Omega_0$ be a neighbourhood of the identity in $H$ contained in $\Omega$ and small enough so that~Proposition~\ref{AProp313} (or~\cite[Prop. 3.13]{EMSAnn} in the archimedean case) applies:
By~Proposition~\ref{AProp313} (or~\cite[Prop. 3.13]{EMSAnn} in the archimedean case), there exists a compact subset~$D$ of~$A_{L_0}$, and a sequence~$(\gamma_i)_{i\geq 0}$ in~$\Gamma$, such that, for any neighbourhood~$\Phi$ of~$D$ in~$V$, one has\footnote{The quantifier~$\forall i \gg 0$ being the usual substitute for the pair of quantifiers~$\exists i_0,\forall i > i_0$.}
\begin{equation} \label{eq0}
\forall i \gg 0,~g_i\cdot\Omega\cdot\gamma_i\cdot p_{L_0}\subset \Phi.
\end{equation}

In other words, the sequence~$\left( g_i·\Omega·\gamma_i·p_{L_0}\right)_{i\geq 0}$ of the subsets of~$V$ is uniformly bounded, and any limit point, for the Hausdorff topology, is contained in~$D$.

One knows that the orbit~$\Gamma\cdot p_{L_0}$ is discrete inside~$V$ (cf.~\cite[Theorem~3.4.]{DM}; this is immediate here for an arithmetic lattice). Choose, for each place~$s$ in~$S$, a~$s$-adic norm~$\Nm{-}_s:V\tens_{\Q_S}\Q_s\to\R_{\geq0}$, and introduce the product norm~$\Nm{-}:V\to\R_{\geq0}, (x_s)\mapsto\max_{s\in S} \Nm{x_s}_s$. We recall that this norm is a proper map. Hence the previous discreteness property translates into
\begin{equation}\label{eq1}
\lim_{x\in\Gamma\cdot p_{L_0}} \Nm{x} =+\infty,
\end{equation}
(confer~\cite[(2.6), {\S}8.1]{KT} for such a setting.)

\subsubsection{Linearised nondivergence and consequences} For any subset~$A$ of~$V$, we abbreviate~$\Nm{A}=\sup_{a\in A}\Nm{a}$.
We now use hypothesis~\eqref{Hypo} (see~\cite[Theorem~2.1 under the form~Theorem~\ref{thm21bis}]{LemmaA}) about the~$g_i$.  In particular
\begin{equation}\label{eq2}
\exists c>0, \forall \gamma \in G, \forall i\geq 0, \Nm{g_i\Omega\gamma p_{L_0}}\geq c·\Nm{\gamma·p_{L_0}}.
\end{equation}
Combining~\eqref{eq1} with~\eqref{eq2} results in~
$$\lim_{x\in\Gamma·p_{L_0}}\inf_i \Nm{g_i\Omega\gamma x} \geq c\cdot \lim_{x\in\Gamma·p_{L_0}} \Nm{x} =+\infty$$

For any bound~$\Lambda<+\infty$, the set 
\begin{equation}\label{eq3}
\left\{\gamma·p_{L_0}\in\Gamma p_{L_0}
~\middle|~\exists i, \Nm{g_i·\Omega\gamma·p_{L_0}}\leq\Lambda\right\}
\end{equation}
is finite. If~$\Lambda>\Nm{D}$, there exists a neighbourhood~$\Phi$ of~$D$ such that~$\Nm{\Phi}\leq\Lambda$. Fix such a~$\Phi$. Applying~\eqref{eq0} to this~$\Phi$, we get that, for~$i\gg0$, the element~$\gamma_i p_{L_0}$ belongs to the finite set~\eqref{eq3}. Consequently, the set~$\left\{\gamma_ip_{L_0}~\middle|~i\geq0\right\}$ described by the sequence~$(\gamma_ip_{L_0})_{i\geq0}$ is finite.

%Consequently, for any subset~$\Phi$ of~$V$ such that~$\Nm{\Phi}\leq\Lambda$, 
%$$\left\{\gamma_i·p_{L_0}~\middle|~i\in\Z_{\geq0},~g_i·\Omega\gamma_i·p_{L_0}\subseteq\Phi \right\},$$
%which is a subset of~\eqref{eq3}, is finite. Take~$\Lambda>\Nm{D}$, so that the set of~$\Phi$ such~$\Nm{\Phi}\leq\Lambda$ contains a basis of neighbourhoods of~$D$. Note that the finite set~\eqref{eq3} is independant of such~$\Phi$.

%
%Consequently,
%\begin{equation}\label{eq3}
%\left\{\gamma·p_{L_0}\in\Gamma·p_{L_0} \middle| \exists i, \Nm{g_i·\Omega\gamma·p_{L_0}}\leq\Nm{\Phi} \right\}
%\end{equation}
%is finite as soon as~$\Phi$ is bounded. Even more: choose a bound~$\Lambda$. Then
%$$\left\{\gamma_i·p_{L_0}\in\Gamma·p_{L_0} \middle|~i\geq0,~g_i·\Omega\gamma_i·p_{L_0}\subseteq\Phi \right\}$$
%is contained in a finite subset of~$\Gamma\cdot p_{L_0}$, independent of~$\Phi$, provided~$\Nm{\Phi}$ is bounded by~$\Lambda$. Take~$\Lambda>\Nm{D}$, so that the set of such~$\Phi$ contain a basis of neighbourhood of~$D$. 

%Using~$\eqref{eq0}$ for such~$\Phi$, we deduce that only finitely many~$\gamma_i·p_{L_0}$ are involved in~\eqref{eq0}.

Let us decompose~$(g_i)_{i\geq0}$ into finitely many subsequences according to the value of~$\gamma_i·p_{L_0}$. Passing to anyone of these subsequences, we can assume~$(\gamma_i·p_{L_0})_{i\geq0}$ is constant. We set~$L={}^{\gamma_0}L_0:={\gamma_0}L_0{\gamma_0}^{-1}$. It belongs to the Ratner type of~$L_0$, and is such that~$p_L=\gamma_0·p_{L_0}=\gamma_i·p_{L_0}$ for all~$i\geq 0$. We will show that this group~$L$ is the one involved in the statement of the theorem.

We now have deduced the following from~\eqref{eq0}. For any neighbourhood~$\Phi$  of~$D$,
\begin{equation}\label{eq5}
 \forall i\gg0, g_i\cdot\Omega\cdot p_L \subseteq \Phi.
\end{equation}
For any such~$\Phi$, the translated subsets~$g_i·\Omega·p_L$ are bounded by~$\Nm{\Phi}$, independently of~$i$. In particular, we can choose a bounded~$\Phi$, and the sequence~$\left( g_i·\Omega·p_L\right)_{i\geq0}$ is uniformly bounded.

\subsubsection{Linearised focusing and consequences}\label{section-lin-focusing} Recall the definitions~\eqref{defLF}.
The stabiliser of~$p_L$ is~$N$, and the fixator\footnote{We name \emph{fixator} the pointwise stabiliser.} of~$\Omega·p_L$ is~$F$.
Applying \cite[Theorem~2]{Lemmanew}, we deduce the following important ingredient of our proof
$$\text{the sequence~}\left( g_i\right)_{i\geq 0}\text{ is of class }O(1)·F.$$

%Theoreme invariant par O(1) ? contexte de la preuve
Without loss of generality, translating by a bounded sequence, we may assume that~$g_i$ belongs to~$F$.

We observe that each conjugate~${g_i}^\omega$ belongs to~$N$. We have, by definition of~$F$,
\begin{equation}\label{stabilitybygi}
g_i·\omega·N=\omega·{g_i}^\omega·N=\omega N.
\end{equation}
This holds for any~$\omega$ in~$\Omega$; hence for any~$\omega$ in the $\Q_S$-Zariski closure of~$\Omega$; in particular for any~$\omega$ in the topological closure~$\overline{\Omega}$.
It follows~$g_i·\overline{\Omega}·N=\overline{\Omega}·N$, and~$g_i\overline{\Omega}·p_L=\overline{\Omega}·p_L$.

%But recall that for any~$\Phi$ in some basis of neighbourhood of~$D$,
%$$\forall i\gg0, g_i\cdot\Omega\cdot p_L \subseteq \Phi.$$
Consequently, in~\eqref{eq5}, we have actually~$g_i\cdot\overline{\Omega}\cdot p_L=\overline{\Omega}\cdot p_L\subseteq \overline{\Phi}$. As we can vary~$\Phi$ through arbitrary small neighbourhoods of~$D$,
\begin{equation}\label{keyvariant}
\overline{\Omega}\cdot p_L\subseteq D.
\end{equation}
 Hence~$\overline{\Omega}\cdot p_L$ is in~$\left(G\cdot p_L\right)\cap V_W$. In other words,
\begin{equation}\label{keypoint}
\text{$\overline{\Omega}\cdot N$ is contained in~$X(L,W)$.}
\end{equation}
It follows that~$\overline{\Omega}\cdot N\Gamma/\Gamma$ is contained in~$X([L],W)$. 

The set~$N\Gamma/\Gamma$ is closed in~$G/\Gamma$, as a consequence of~$\Gamma·p_L$ being closed (actually discrete) in~$V$ (confer~\cite[Argument in~{\S}8.1]{BorelIntro}). It follows, as~$\overline{\Omega}$ is compact, that~$\overline{\Omega}N\Gamma/\Gamma$ is closed inside~$G/\Gamma$.  But~$\overline{\Omega}N\Gamma/\Gamma$ contains the support of~$\mu_\Omega$. From~\eqref{stabilitybygi}, it is also stable under the~$g_i$, and will contain the support of each of the~$g_i\mu_\Omega$. Finally, it contains the support of~$\mu_\infty$, and the support of~${\mu_\infty}^{[L]}$. 

Hence the support of~$\mu_\infty$ is contained in~$X([L],W)$. Consequently (recall~\eqref{notationrestriction} and~\eqref{eqmini}),
\begin{equation}\label{purity}
\mu_\infty=\mu_\infty|_{X([L],W)}={\mu_\infty}^{[L]}={\mu_\infty}^{[L]}|_{X^*([L],W)}.
\end{equation}
We have proved that Ratner decomposition~\eqref{Ratnerdecomposition} of~$\mu_\infty$ has a unique Ratner type!

We will henceforth drop the exponent~${}^{[L]}$ and write~$\mu_\infty$ instead of~${\mu_\infty}^{[L]}$.

We know that the~$W$-ergodic components of~${\mu_\infty}$ are all translates~$c\cdot\mu_{\Lpp}$. 
If~$c\cdot\mu_{\Lpp}$ is one of the~$W$-ergodic components of~$\mu_\infty$, we necessarily have
\begin{itemize}
\item firstly~$c\Gamma\in\overline{\Omega}N\Gamma/\Gamma$, as the support of~$\mu_\infty$ is contained in~$\overline{\Omega}N\Gamma/\Gamma$;
\item secondly,~$c\in X^*(L,W)$, because~$W$ acts ergodically on~$c\cdot\mu_{\Lpp}$.
\end{itemize}
In other words
\begin{equation}\label{cbelongs}
\text{
the element~$c$ belongs to the intersection~$\overline{\Omega}N\Gamma\cap X^*(L,W).$
}
\end{equation}

\subsubsection{Linearisation and self-intersection}
We now use~\cite[Prop.~3.3, Corollary~3.5]{DM}\footnote{The role played by~$H$ in~\cite[pp.99-100]{DM} is played by~$L$ here. The Ratner class~$\Rat_\Q$ here is a substitute for~$\mathscr{H}$ in~\cite[pp.99-100]{DM}. The~$\Gamma_N$ we are about to define is denoted~$\Gamma_L$ in~\cite[pp.99-100]{DM}.}.
Let~$\Gamma_N=\Gamma\cap N$.
After~\cite[Prop.~3.3]{DM}, the set~$X^*(L,W)$ ``does not have~$(L,\Gamma_N)$-self-intersection" in the sense of~\cite[{\S}3, p.100]{DM}. Namely 
\begin{subequations}
\begin{equation}\label{self1}
\forall x\in X^*(L,W), \forall \gamma\in\Gamma, x\gamma\in X^*(L,W)\Rightarrow \gamma\in\Gamma_N.
\end{equation}
Equivalently the quotient map~$\varphi:G/\Gamma_N\to G/\Gamma$ induces a bijection\footnote{A similar geometric interpretation of~\cite[p.100]{DM} ``self-intersetion" property can be found at the end of \cite[Corollary~3.5]{DM} "The quotient map~[\ldots] is injective. that is~[\ldots]".}
\begin{equation}\label{self2}
\left.X^*(L,W)\,\Gamma_N\middle/\Gamma_N \right.
\xrightarrow{}
 \left.X^*(L,W)\,\Gamma\middle/\Gamma\right. .
\end{equation}
\end{subequations}

Denote
\begin{subequations}
\begin{equation}\label{starnotation}\left(\overline{\Omega}N\Gamma\right)^*:=X^*(L,W)\cap\left(\overline{\Omega}N\Gamma\right).\end{equation}
It can be considered as a lifting from~$G/\Gamma$ to~$G$ of
\begin{equation}\label{eqa}\left.\left(\overline{\Omega}N\Gamma\right)^*\Gamma\middle/\Gamma\right.=X^*([L],W)\cap\left(\left.\overline{\Omega}N\Gamma\middle/\Gamma\right.\right).\end{equation}
\end{subequations}
On the right hand side of equality~\eqref{eqa}, the first factor is of full measure for~${\mu_\infty}^{[L]}$, and the second factor contains the support of~${\mu_\infty}^{[L]}$. As a consequence,~$\left.\left(\overline{\Omega}N\Gamma\right)^*\Gamma\middle/\Gamma\right.$ is of full measure for~${\mu_\infty}^{[L]}$.

Recall that~$\overline{\Omega}N$ is contained in~$X(L,W)$, by~\eqref{keypoint}. Therefore, using~\eqref{self1},
\begin{equation}\label{omegaself1}
\left(\overline{\Omega}N\right)^*:=X^*(L,W)\cap\left(\overline{\Omega}N\right)
\end{equation}
is such that
\begin{equation}\label{omegaself2}
\left(\overline{\Omega}N\right)^*\Gamma\cap X^*(L,W)=\left(\overline{\Omega}N\right)^*\Gamma_N\cap X^*(L,W).
\end{equation}
Let us prove that
\begin{equation}\label{gammaoops}
\left(\overline{\Omega}N\right)^*=\left(\overline{\Omega}N\Gamma\right)^*.
\end{equation}
In details:
\begin{proof} We prove the identity by double inclusion. The inclusion~$\left(\overline{\Omega}N\right)^*\subseteq\left(\overline{\Omega}N\Gamma\right)^*$ is immediate. Let us prove the reverse inclusion.

Let~$x$ be in~$\left(\overline{\Omega}N\Gamma\right)^*$. We can write~$x=y\gamma$ with~$y\in\overline{\Omega}N$ and~$\gamma\in\Gamma$. If~$y$ belongs to~$\left(\overline{\Omega}N\right)^*$, then~\eqref{omegaself2} implies~$\gamma\in\Gamma_N$, and~$x$ would belong to~$y\Gamma_N\subseteq yN\in \overline{\Omega}N N=\overline{\Omega}N$ and we would be done.

What if~$y$ belongs to~$\overline{\Omega}N$ but not to~$X^*(L,W)$. It then belongs to~$X(L,W)\smallsetminus X^*(L,W)$, by~\eqref{keypoint}. But the right hand side of~\eqref{X3} is right-invariant under~$\Gamma$, by footnote~\ref{rightgammainv}. So~$x=y\gamma$ can't belong to~$X^*(L,W)$. This case doesn't occur.
\end{proof}

\subsubsection{About~$W$ and left-invariance of~$\mu_\infty$}\label{subsubsection-W}
Recall~\eqref{cbelongs}, that the~$W$-ergodic components of~$\mu_\infty$ are of the form~$c\cdot\mu_{\Lpp}$ with~$c\in \left(\overline{\Omega}N\Gamma\right)^*$ (cf~{\S}\ref{RatnerNotations}). By~\eqref{gammaoops}, one gets~$c\in \left(\overline{\Omega}N\right)^*\subseteq\overline{\Omega}N$. 
Write~$c=hn$ with~$h\in H$ and~$n\in N$. Then~$c\cdot\mu_{\Lpp}$ is left invariant under~$hn\Lpp n^{-1}h^{-1}$. Consequently~$\mu_\infty$ is left invariant under
$$ M:= \bigcap_{h\in H} h{\Lpp}h^{-1}.$$
In particular,~$W$ contains~$M^+$, by maximality of~$W$.

Recall that~$\overline{\Omega}N$ is contained in~$X(L,W)$. Namely,~$Wx\subseteq xL^+$ for any $x$ in~$\overline{\Omega}N$. It follows that~$W$ acts trivially on the quotient~$\overline{\Omega}N/L^+$. Equivalently,
$$W\subseteq 
\bigcap_{\omega \in \overline{\Omega}N} {}^\omega{L^{+}}
\subseteq \bigcap_{\omega \in \overline{\Omega}} {}^\omega{L^{+}}
 \subseteq \bigcap_{\omega\in\overline{\Omega}} {}^\omega{\Lpp} = {\Lpp}^\Omega.$$
In particular, this give the reverse inclusion of the identity~$W=\left({\Lpp}^\Omega\right)^+=M^+$.

\subsubsection{Lifting along~$G/\Gamma_N\xrightarrow{\varphi}G/\Gamma$ of the measures}
Finally,~\eqref{self2} induces a bijection
\begin{equation}\label{finalbij}
\left.\left(\overline{\Omega}N\right)^*\Gamma_N\middle/\Gamma_N\right.
=
\left.\left(\overline{\Omega}N\Gamma\right)^*\Gamma_N\middle/\Gamma_N\right.
\xrightarrow{\psi}
\left.\left(\overline{\Omega}N\Gamma\right)^*\Gamma\middle/\Gamma\right.
\end{equation}
We will lift the limit measure~$\mu_\infty$ and the sequence~$g_i\cdot\mu_\Omega$, and show that we have still
convergence of measures and an ergodic decomposition.

Recall that the target space is of full measure for~$\mu_\infty$. The map~$\varphi$ is a local homeomorphism, and hence an open map, and hence maps a Borel subset to a Borel subset. Its restriction to~$\left.\left(\overline{\Omega}N\right)^*\Gamma_N\middle/\Gamma_N\right.$, which is easily checked to be borelian, will also map a Borel subset to a Borel subset. In particular, the inverse bijection~$\psi^{-1}$ is a Borel measurable map. Let us lift the measure~$\mu_\infty$ along the bijection~\eqref{finalbij}. It results in a measure
\begin{equation}\label{lift}
{\tilde{\mu}_\infty}:=
{\psi^{-1}}_\star\left({\mu_\infty}\right)
%|_{ \left.\left(\overline{\Omega}N\right)^*\Gamma_N\middle/\Gamma_N\right. }
\end{equation} 
whose support is contained in the closed set~$\left.\overline{\Omega}N\Gamma_N\middle/\Gamma_N\right.=\left.\overline{\Omega}N\middle/\Gamma_N\right.$, and whose direct image satisfies
$${\varphi}_\star\left({{\tilde{\mu}}_\infty}\right)={\mu_\infty}.$$
There is two notable consequences:~$\mu_\infty$ is determined by its lifting~${\tilde{\mu}_\infty}$; any left translation by an element~$g$ of~$G$ that leaves~${\tilde{\mu}_\infty}$ invariant also leaves~$\mu_\infty$ invariant. 

We will furthermore lift the sequence~$g_i\cdot\mu_\Omega$. Mimicking the definition of~$\mu_\Omega$, we define~$\tilde{\mu}_\Omega$ to be the direct image in~$G/\Gamma_N$ (instead of~$G/\Gamma$) of the probability on~$\Omega$ that is a restriction a Haar measure on~$H$. We can then consider the sequence
\begin{equation}\label{naivelift}
\left( g_i\cdot\tilde{\mu}_\Omega\right)_{i\geq 0}
\end{equation}
of probabilities on~$G/\Gamma_N$, whose projection on~$G/\Gamma$ gives back~$\left( g_i\cdot\mu_\Omega\right)_{i\geq 0}$. Note that the~$g_i\cdot\tilde{\mu}_\Omega$ are supported inside the closed subset~$\overline{\Omega}N/\Gamma_N$.
 (cf. the argument between~\eqref{keypoint} and~\eqref{purity}.)

In order to apply Proposition~\ref{Propoliftcvg} to the map~$\overline{\Omega}N/\Gamma_N\xrightarrow{\pi}\overline{\Omega}N\Gamma/\Gamma$, we need to check a few things.
\begin{lemma}
\begin{itemize}
\item  The locus of points~$\left.\overline{\Omega}N\Gamma\middle/\Gamma\right.$ where the fiber of the projection~$\pi$ is a singleton, form an open subset.
\item The projection~$\pi:\left.\overline{\Omega}N\Gamma_N\middle/\Gamma_N\right.\to\left.\overline{\Omega}N\Gamma\middle/\Gamma\right.$
is proper and, above the considered locus, an homeomorphism.
\end{itemize}
\end{lemma}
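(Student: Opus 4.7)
The plan is to exploit two ingredients already in hand. Firstly, the map $\varphi\colon G/\Gamma_N\to G/\Gamma$ is a covering (with discrete fibre $\Gamma/\Gamma_N$, since $\Gamma$ is discrete), so in particular $\pi$ is locally a homeomorphism. Secondly, the orbit $\Gamma\cdot p_L$ is discrete in $V$ (recalled at~\eqref{eq1}) and $N$ is \emph{exactly} the stabiliser of $p_L$, so the orbit map $g\mapsto g\cdot p_L$ gives tight control on both fibres and lifts. I would also record at the outset that $\overline{\Omega}N$ is closed in $G$: the map $\overline{\Omega}\times N\to G$, $(h,n)\mapsto hn$, is proper because $\overline{\Omega}$ is compact and $N$ is closed. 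Since $\overline{\Omega}N\cdot\Gamma_N=\overline{\Omega}N$, it follows that $\overline{\Omega}N/\Gamma_N$ is closed in $G/\Gamma_N$. Moreover, for $y\in\overline{\Omega}N$ the fibre $\pi^{-1}(y\Gamma)$ is in bijection with $\{\gamma\Gamma_N:y\gamma\in\overline{\Omega}N\}$, and via $\gamma\Gamma_N\leftrightarrow\gamma\cdot p_L$ with $\Gamma\cdot p_L\cap y^{-1}\overline{\Omega}\cdot p_L$ -- finite as an intersection of a discrete set with a compact subset of $V$.

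For properness, pick a compact $K\subset\overline{\Omega}N\Gamma/\Gamma$ and lift $K$ along the covering $G\to G/\Gamma$ to a compact $\widetilde K\subset G$ (standard: cover $K$ by finitely many evenly covered opens). Any $y_n\Gamma_N\in\pi^{-1}(K)$ with $y_n\in\overline{\Omega}N$ admits a representative $y_n=\widetilde y_n\gamma_n^{-1}$ with $\widetilde y_n\in\widetilde K$ and $\gamma_n\in\Gamma$. The condition $y_n\cdot p_L\in\overline{\Omega}\cdot p_L$ then forces $\gamma_n^{-1}\cdot p_L$ into the compact subset $\widetilde K^{-1}\overline{\Omega}\cdot p_L$ of $V$, so $\gamma_n^{-1}\Gamma_N$ takes only finitely many values; extract a subsequence along which it is constant $=\gamma_0^{-1}\Gamma_N$. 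Writing $\gamma_n^{-1}=\gamma_0^{-1}n_n$ with $n_n\in\Gamma_N$, we get $y_n\Gamma_N=(\widetilde y_n\gamma_0^{-1})\Gamma_N$; compactness of $\widetilde K$ yields a convergent subsequence of $\widetilde y_n$ in $G$, hence of $y_n\Gamma_N$ in $G/\Gamma_N$, and the closedness of $\overline{\Omega}N/\Gamma_N$ places the limit back in $\pi^{-1}(K)$.

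For openness of the singleton locus I would argue by contradiction. Suppose $\pi^{-1}(y\Gamma)=\{\widetilde y\Gamma_N\}$ but $y_n\Gamma\to y\Gamma$ admits, for each $n$, distinct preimages $y_n\Gamma_N\neq z_n\Gamma_N$ in $\pi^{-1}(y_n\Gamma)$. Properness (applied to a compact neighbourhood of $y\Gamma$) lets me extract subsequences along which both converge in $\overline{\Omega}N/\Gamma_N$ to points of $\pi^{-1}(y\Gamma)$, hence both to $\widetilde y\Gamma_N$. But $\varphi$ is a local homeomorphism near $\widetilde y\Gamma_N$, so two sequences converging to $\widetilde y\Gamma_N$ whose images in $G/\Gamma$ coincide for each $n$ must eventually be equal -- contradicting $y_n\Gamma_N\neq z_n\Gamma_N$. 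The second bullet point of the lemma is then immediate: over the singleton locus $\pi$ is a continuous, proper bijection of locally compact Hausdorff spaces, and such a map is automatically closed, hence a homeomorphism. I expect the properness step to be the main obstacle, in that one must carefully balance the bookkeeping of the shifts $\gamma_n$ (controlled by discreteness of $\Gamma\cdot p_L$ and by $N=\mathrm{Stab}(p_L)$) with the requirement that the limit lie in the closed subspace $\overline{\Omega}N/\Gamma_N$ rather than merely in $G/\Gamma_N$; once this is secured, openness of the singleton locus follows formally from the covering structure.
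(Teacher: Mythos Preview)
Your argument is correct. The approach differs from the paper's in its logical order and in the style of the estimates. The paper proves openness of the singleton locus \emph{first} and directly, by writing the fibre-count function
\[
\chi(c)=\#\bigl(c\Gamma/\Gamma_N\cap\overline{\Omega}N/\Gamma_N\bigr)
\]
locally as a finite sum of indicator functions of the closed sets $\overline{\Omega}Nf^{-1}$ (with $f\Gamma_N$ ranging over the finite set $F_C=C^{-1}\overline{\Omega}N/N\cap\Gamma N/N$), whence $\chi$ is upper semi-continuous and $\{\chi\ge 2\}$ is closed. It then proves properness by an explicit containment $\pi^{-1}(K)\subset\overline{\Omega}CF_C\cap\overline{\Omega}N/\Gamma_N$. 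You instead prove properness first, by a sequential-compactness argument that reduces the bookkeeping of shifts $\gamma_n$ to the discreteness of $\Gamma\cdot p_L$ in~$V$, and then derive openness of the singleton locus as a formal consequence of properness together with the covering structure of $G/\Gamma_N\to G/\Gamma$.

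Both routes rest on the same key input, namely the discreteness of $\Gamma\cdot p_L$ (equivalently, of $\Gamma N/N$). The paper's argument is more constructive, producing an explicit compact set containing $\pi^{-1}(K)$ and an explicit decomposition of~$\chi$; yours is softer but has the advantage that the two bullet points are linked, with openness falling out of properness rather than requiring a separate upper-semicontinuity computation. One small point: your justification that $\overline{\Omega}N$ is closed (``the map $(h,n)\mapsto hn$ is proper because $\overline{\Omega}$ is compact and $N$ is closed'') is slightly terse; the underlying fact is simply that a convergent sequence $h_nn_n\to g$ admits, by compactness of $\overline{\Omega}$, a subsequential limit $h_n\to h\in\overline{\Omega}$, forcing $n_n\to h^{-1}g\in N$.
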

\begin{proof}[Proof of the openness statement]
 Let~$C$ be an arbitrary large compact subset of~$N$. For~$c$ in~$C$, we consider the fiber~$c\Gamma/\Gamma_N$ of~$\varphi$ at~$c$, and its intersection with~$\overline{\Omega}N/\Gamma_N$, namely
 $$c\Gamma/\Gamma_N\cap \overline{\Omega}N/\Gamma_N.$$
 an element of this intersection can be written
 $$ c\gamma\Gamma_N= \omega n \Gamma_N$$
 with~$\gamma\in\Gamma$, with~$\omega\in\overline{\Omega}$ and with~$n$ in~$N$.

For such~$\gamma,\omega$ and~$n$, we have
$$\gamma\Gamma_N=c^{-1}\omega n \Gamma_N.$$
The coset~$\gamma N$ belongs to~$C^{-1}\overline{\Omega}N/N$, which is the image of a compact subset in the quotient space~$G/N$. It is a compact subset. But the orbit~$\Gamma N/N$ is closed, and discrete. Hence the coset~$\gamma N$ belong to the finite set
$$ F_C:=C^{-1}\overline{\Omega}N/N\cap \Gamma N/N.$$

Consider the function
\begin{equation}\label{semicontinu}
\chi:c\mapsto \# \left( c\Gamma/\Gamma_N \cap \overline{\Omega}N/\Gamma_N \right)
\end{equation}
over~$C$. It is the sum over the cosets~$f\Gamma_N$ in~$F_C$ of the characteristic function~$\chi_{f\Gamma_N}$ of the set
$$\{c\in G~|~cf\Gamma_N\in \overline{\Omega} N/\Gamma_N \}= \overline{\Omega} N f^{-1}.$$
The latter is a closed subset. The function~$\chi_{f\Gamma_N}$ is upper semi-continuous. Then
$$\chi=\sum_{f\Gamma_N\in F_C}\chi_{f\Gamma_N}$$
is upper semi-continuous.

As a consequence, the locus~$\chi\geq 2$ is closed in~$C$. As~$C$ is arbitrarily large in~$N$, the locus~$\chi\geq 2$ is closed in~$N$.

As the function~$\chi$ is right~$\Gamma$-invariant, we get the conclusion.
\end{proof}
\begin{proof}[Proof of the openness of the projection] Let~$K$ be a compact subset of~$\overline{\Omega}N\Gamma/\Gamma$. It is contained in~$\overline{\Omega}C$, for a sufficiently large compact subset~$C$ of~$N$. As above, its inverse image~$\stackrel{-1}{\pi}(K)$ in~$\overline{\Omega}N/\Gamma_N$ is contained in~$\overline{\Omega}C F_C$, which is compact in~$G/\Gamma_N$. The intersection with the closed subset~$\overline{\Omega}N/\Gamma_N$ is a compact~$\overline{\Omega}C F_C\cap\overline{\Omega}N/\Gamma_N$.

The inverse image~$\stackrel{-1}{\pi}(K)$ is contained in a compact of~$\overline{\Omega}N/\Gamma_N$. But it is closed, as~$K$ is closed and~$\pi$ is continuous. Thus~$\stackrel{-1}{\pi}(K)$ is compact. As~$K$ is arbitrary, the application~$\pi$ is proper.

To conclude, it suffices to remark that a bijective continuous proper map between metrisable locally compact spaces is necessarily an homeomorphism.
\end{proof}

We now apply Proposition~\ref{Propoliftcvg}. The sequence~\eqref{naivelift} has a weak limit, and this limit is~\eqref{lift}.
\begin{equation}\label{liftlimit}
\lim_{i\geq 0} g_i\cdot \tilde\mu_\Omega=\tilde\mu_\infty
\end{equation}

\subsubsection{Lifting the ergodic decomposition}
Note\footnote{By definition~$N$ normalises~$L$, and hence normalises~$L^+$ (which is invariant in~$L$ under algebraic automorphisms). Hence~$\Gamma\cap N$ normalises both~$L\cap\Gamma$ and~$L^+$, hence normalises~$\Lpp=\overline{L^+(\Gamma\cap L)}$.} that~$\Gamma_N$ normalises~$\Lpp$ in~$G$. Consequently, the quotient space~$G/\Gamma_N$ admits a \emph{right} $\Lpp$-action, with correponding quotient space~$G/(\Lpp\Gamma_N)$. 
We know that the~$W$-ergodic components of~${\mu_\infty}^{[L]}$ are all translates~$c\cdot\mu_{\Lpp}$. 
If~$c\cdot\mu_{\Lpp}$ is one of the~$W$-ergodic components of~$\mu_\infty$, recall by~\eqref{cbelongs} that
%\begin{itemize}
%\item firstly~$c\Gamma\in\overline{\Omega}N\Gamma/\Gamma$, as the support of~$\mu_\infty$ is contained in~$\overline{\Omega}N\Gamma/\Gamma$;
%\item secondly,~$c\in X^*(L,W)$, because~$W$ acts ergodically on the translate~$c\cdot\mu_{\Lpp}$.
%\end{itemize}
%In other words~
$c$ belongs to~$\left(\overline{\Omega}N\right)^*$.

As~$\left(\overline{\Omega}N\right)^*\Gamma/\Gamma$ is of full measure for~$\mu_\infty$, it is of full measure for almost all ergodic component. Consequently almost all ergodic component~$c\cdot\mu_{\Lpp}$ has a unique lift to~$\left(\overline{\Omega}N\right)/\Gamma_N$, which is
\begin{equation}\label{ergodiclift}
{\psi^{-1}}_\star(c\cdot\mu_{\Lpp}).
\end{equation}

Denote~$\tilde{\mu}_{\Lpp}$  the probability on~$\Lpp\Gamma_N/\Gamma_N$ which is left~$\Lpp$-invariant. Consider the lift~$c\cdot\tilde{\mu}_{\Lpp}$ of~$c\cdot\mu_{\Lpp}$ from~$G/\Gamma$ to~$G/\Gamma_N$. In order to prove that this actually the unique lift to~$\left(\overline{\Omega}N\right)^*/\Gamma_N$, it suffices to prove that~$\left(\overline{\Omega}N\right)/\Gamma_N$ is of full measure for~$c\cdot\tilde{\mu}_{\Lpp}$. But~$c$ belongs to~$\left(\overline{\Omega}N\right)^*$, and~$\left(\overline{\Omega}N\right)$ is right~$\Lpp$ invariant (as~$N$ contains~$L$.) Thus the support of~$c\cdot\tilde{\mu}_{\Lpp}$ is contained in~$\left(\overline{\Omega}N\right)/\Gamma_N$.

In the same way that~$\mu_\infty$ is made of components of the type~$c\cdot{\mu}_{\Lpp}$, its lift~${\tilde{\mu}_\infty}$ is made of the corresponding lifts~$c\cdot\tilde{\mu}_{\Lpp}$. In particular, we have proved that
\begin{equation}\label{claimLinvariant}\text{``${\tilde{\mu}_\infty}$ is right $\Lpp$-invariant''.}\end{equation}

\subsubsection{Reduction to a subquotient~$\widehat{G}$}\label{induction} We will start our induction process.

Define the measure~${\widehat{\mu}_\infty}$ as the direct image of~${\tilde{\mu}_\infty}$ along
\[G/\Gamma_N\xrightarrow{}G/\left(\Gamma_N\cdot \Lpp\right).\] 
We define also the direct image~$\widehat{\mu}_\Omega$ of~$\tilde{\mu}_\Omega$. Thanks to~\eqref{claimLinvariant}, we know that~${\widehat{\mu}_\infty}$ and~${\tilde{\mu}_\infty}$ determine each other completely.
 We note that to establish formula~\eqref{limitformula} identifying~$\mu_\infty$ in the Theorem, it will suffice to identify~$\widehat{\mu}_\infty$.

More precisely direct image induces a~$G$-equivariant bijection between right~$\Lpp$ invariant probabilities on~$G/\Gamma_N$ and probabilities on~$G/(\Lpp\Gamma_N)$.  Consequently, the stabiliser of~${\widehat{\mu}_\infty}$ in~$G$ equals the stabiliser of~${\tilde{\mu}_\infty}$ in~$G$.

Recall from~{\S}\ref{subsubsection-W} that~$W=M^+\subseteq M$. 
Moreover~$M$ stabilises~$\tilde{\mu}_\infty$. It hence stabilises~${\widehat{\mu}}_\infty$ also. 

By~Lemma~\ref{lemdebut},~$H$ and~$F$ both normalise~$M$ and we can consider the quotient group
\begin{equation}\label{Ghat}
\widehat{G}:=HF/M.
\end{equation}
Denoting~$\widehat{g}_i$ the image of~$g_i\in F$ in~$\widehat{G}$, then the image of~$g_i\tilde{\mu}_\Omega$ is~$\widehat{g}_i\widehat{\mu}_\Omega$. Note that the measure~$\widehat{\mu}_\infty$ is the limit of the sequence~$\left(\widehat{g_i}\widehat{\mu}_\Omega\right)_{i\geq0}$.

\subsubsection{Open and finite index}
Before proceeding furthermore, let us check that the subgroup~$M$ in~${L}^H$ is open of finite index.
\begin{proof} First observe The group~$L^H$ is algebraic over~$\Q_S$.

As~$\Lpp$ is open and of finite index in~$L$, the subgroup~$\ell=\Lpp\cap L^H$ of~$L^H$ is open and of finite index~$n\in\Z_{>0}$. The group~$L^H$ is normalised by~$H$, and~$H$ acts by continuous algebraic automorphism. Each conjugate~$\ell^h=h^{-1} \ell h$ of~$\ell$ by some~$h$ in~$H$ is another open subgroup of~$L^H$ of the same finite index~$n\in\Z_{>0}$. Their intersection is~$M$. It would suffice that they are finitely many distinct~$\ell^h$.

Actually, see Lemma~\ref{Lemma(F)},~$L$ has finitely many open subgroups of given finite index~$n\in\Z_{>0}$ (The profinite continuous quotient of~$L$ is of type~$(F)$ as in~\cite[{\S}6.4]{PR} or~\cite[{\S}4.1]{SerreLNM5}; actually it is topologically finitely generated).
\end{proof}

As a consequence the comparison map
\[\widehat{G}=HF/M\to \check{G}:=HF/L^H\]
is surjective with finite kernel: one could say an isogeny. Let~$\underline{H}$,~$\underline{F}$ and~$\underline{L^H}$
denote the~$\Q_S$-algebraic group associated with~$H$,~$F$ and~$L^H$.
Then $\check{G}$ is open of finite index (\cite[{\S}3.1 Prop.~3.3 Cor.~1, {\S}6.4]{PR}) in the group of
rational points~$\underline{\check{G}}(\Q_S)$ of 
\[\underline{\check{G}}=\underline{H}\,\underline{F}\,/\underline{L^H}.\]
In particular,~$\widehat{G}$ is locally isomorphic to the linear algebraic group~$\underline{\check{G}}(\Q_S)$
over~$\Q_S$. We can then define the Lie algebra~$\widehat{\lie{g}}$ of~$\widehat{G}$ over~$\Q_S$, which we will identify
with the Lie algebra of~$\check{G}$. The adjoint representation of~$\widehat{G}$ on~$\widehat{\lie{g}}$ factors through~$\check{G}$
via the comparison map above.

\subsubsection{Proving boundedness~\eqref{focusingcriterion} by contradiction}\label{provingbdd}
Let~$\widehat{H}$ be the image of~$H$, 
and~$\widehat{\lie{h}}$ the corresponding Lie subalgebra of~$\widehat{\lie{g}}$.
We write~$\widehat{Z}$ for the centraliser of~$\widehat{\lie{h}}$ in~$\widehat{G}$. 

%Let us denote~$\left(\hat{g_i}\right)_{i\geq 0}$ the image sequence, in~$\widehat{G}$ of our sequence~$\left({g_i}\right)_{i\geq 0}$.
We will now prove the following claim
\begin{equation}\label{claim1}
\text{``the sequence~$\left(\widehat{g_i}\right)_{i\geq 0}$ is of class~$O(1)\widehat{Z}$".}
\end{equation}
Let~$\widehat{\lie{h}}$ denote the~$\Q_S$-Lie algebra of~$\widehat{H}$. By virtue of Proposition~\ref{Propobddcrit} (applied to~$\check{G}$), the claim~\eqref{claim1} is actually equivalent to
\begin{equation}\label{claim2}
\text{``the sequence~$\left(\Ad_{\widehat{g_i}}\right)_{i\geq 0}$ is uniformly bounded on~$\widehat{\lie{h}}$".}
\end{equation}
We will prove~\eqref{claim2} by contradiction. We first show that the failure of~\eqref{claim2} implies that~$\widehat{\mu}_i$ is invariant under non trivial unipotent one parameter subgroups of~$\widehat{G}$. This part of the argument is well known. We stress this argument is the main reason 
why Ratner theory of unipotent flows is relevant to our subject, and is responsible for the strategy of proof we are following since the start.
\begin{proof}[Proof of~\eqref{claim2} by contradiction]
Define the closed subset~$X$ of~$G/(\Gamma_N\cdot \Lpp)$ as the closure of~$HN/(\Gamma_N\cdot \Lpp)$. 
\begin{equation}\label{defX}
X=\overline{HN/(\Lpp\Gamma_N)}\subseteq G/(\Gamma_N\cdot \Lpp)
\end{equation}
We note that~$HN/(\Gamma_N\cdot \Lpp)$ is stable under the action of~$HF$; and that it is acted upon trivially under~$M$. Consequently~$X$ is a locally compact space under which~$\widehat{G}$ acts continuously.

Assume by contradiction that, maybe extracting a subsequence, there is a sequence~$(X_i)_{i\geq0}$ in~$\widehat{\lie{h}}$ converging to~$0$ in~$\widehat{\lie{h}}$ such that the sequence~$\left(\Ad_{\widehat{g_i}}(X_i)\right)_{i\geq 0}$ has a nonzero limit~$X_\infty$ in~$\widehat{\g}$.

We note that
 \begin{itemize}
\item for big enough~$i$, the element~$X_i$ will be close enough to~$0$ so that the exponential~$\exp_{\widehat{H}}$ of the group~$\widehat{H}$ converges at~$X_i$, and~$\widehat{h}_i:=\exp_{\widehat{H}}(X_i)$ belongs to~$\widehat{H}$;
\item for such~$i$, one has~$\exp_{\widehat{G}}\left(\Ad_{\widehat{g_i}}(X_i)\right)=\widehat{g_i}\exp_{\widehat{H}}(X_i)\widehat{g_i}^{-1}=\widehat{g_i}\widehat{h}_i\widehat{g_i}^{-1}$;
\item the measure~$\widehat{\mu}_\infty$ is supported inside~$X$, this is the limit of~$(\widehat{g}_i\cdot\widehat{\mu}_\Omega)_{i\geq0}$, and each~$\widehat{g}_i\cdot\widehat{\mu}_\Omega$ is also supported inside~$X$;
\item the sequence~$(\widehat{h_i})_{i\geq0}$ converges to the neutral element, and the sequence~$(\widehat{h}_i\cdot\widehat{\mu}_\Omega)_{i\geq0}$ converges strongly to~$\widehat{\mu}_\Omega$;
\item the sequence~$\left(\widehat{g_i}\widehat{H}_i\widehat{g_i}^{-1}\right)_{i\geq0}$ converges to a non neutral element~$\widehat{g}_\infty:=\exp_{\widehat{G}}(X_\infty)$ of~$\widehat{G}$.
\end{itemize}

Applying Proposition~\ref{unipotescence},~$\widehat{g}_\infty$ fixes~$\widehat{\mu}_\infty$. Consequently, for any lift~$g_\infty$ of~$\widehat{g}_\infty$,
$$g_\infty\cdot\widetilde{\mu}_\infty=\widetilde{\mu}_\infty$$
and finally
$$g_\infty\cdot{\mu}_\infty={\mu}_\infty.$$
Consequently,~$g_\infty$ is an element of~$\Stab_G(\mu_\infty)$ which does not belong to~$W$.

Renormalising the sequence~$(X_i)_{i\geq0}$ by a constant non zero factor, we can assume~$X_\infty$
is in a chosen neighbourhood of~$0$ in~$\g$. We can choose such a neighbourhood such that~$\exp_{\widehat{G}}$
 and~$\exp_{\check{G}}$ induce (compatible) local  homeomorphisms between~$\widehat{G}$, $\check{G}$ and
 this neighbourhood. As a consequence~$\widehat{g}_\infty$ will also correspond to (will map to) a non neutral element~$\check{g}_\infty$
 in~${\check{G}}$.
 
Note that~$\check{g}_\infty$ is unipotent\footnote{We needed to work with~$\widehat{G}$ in order to use its action on measures such as~$\mu_\infty$,
and we need to pass to~$\check{G}$ which is algebraic, in order to use the algebraic definition for a unipotent element.} in~$\check{G}$ (its conjugacy class is adherent to the neutral element).

%TODO Porpoliftuni
But, by {\S}\ref{subsubsection-liftuni} this contradicts the definition of~$W$ (see p.~\pageref{defW}).
\end{proof}

We now have proved~\eqref{claim1}. This actually is a form of the focusing criterion, from which 
we will derive~\eqref{focusingcriterion}.

\subsubsection{Conclusion} We will now extract information from our previous work.

 Let~$\check{H}$ be the image of~$H$ in~$\check{G}$. It is Zariski connected
with Lie algebra~$\widehat{\lie{h}}$. The centraliser~$\check{Z}$ of~$\check{H}$ in~$\check{G}$ is that of~$\widehat{\lie{h}}$.
Let~$Z$ be its inverse image in~$HF$; this is also the inverse image of~$\widehat{Z}$. It contains~$(H^\prime\cap HF)\cdot L^H$. By~\ref{}, 
the Zariski closure of~$(H^\prime\cap HF)\cdot L^H$ is open of finite index in that of~$Z$. As~$(H^\prime\cap HF)\cdot L^H$ of open of finite
index in its Zariski closure, it is open of finite index in~$Z$. % It is also normal in~$Z$.
It follows that the following are equivalent
\begin{itemize}
\item the sequence~$(g_i)_{i\geq0}$ is of class~$O(1)\cdot Z$ in~$HF$;
\item the sequence~$(g_i)_{i\geq0}$ is of class~$O(1)\cdot (H^\prime\cap HF) L^H$ in~$HF$;
\item the sequence~$(\widehat{\mu}_i)_{i\geq0}$ is of class~$O(1)\cdot \widehat{Z}$ in~$\widehat{G}$;
\item the image sequence in~$\check{G}$ is of class~$O(1)\cdot \check{Z}$.
\end{itemize}
We have proved 
\begin{equation}
\text{``the sequence~$\left(g_i\right)_{i\geq 0}$ is of class~$O(1)\cdot (H^\prime\cap HF)\cdot L^H$".}
\end{equation}
We recall from~{\S}\ref{section-lin-focusing} that the sequence~$\left(g_i\right)_{i\geq 0}$ also evolves in~$F$. 
Let us write
\[g_i=b_i\cdot z_i\cdot m_i\]
with a bounded sequence~$(b_i)_{i\geq0}\in O(1)$ in~$G$, a sequence~$(z_i)_{i\geq0}$ in~$(H^\prime\cap HF)$
and~$(m_i)_{i\geq0}$ in~$L^H$. Reducing in~$HF/F$, the~$m_i$ maps to the neutral element. It follows 
that the sequence of cosets~$(z_iF)_{i\geq0}$ is bounded in~$(H^\prime\cap HF)F/F\simeq(H^\prime\cap HF)/(H^\prime\cap F)$: it is inverse
to the sequence~$(b_iF)_{i\geq0}$.
As a consequence we have actually
\begin{equation}\label{claim1}
\text{``the sequence~$\left(g_i\right)_{i\geq 0}$ is of class~$O(1)\cdot (H^\prime\cap F)\cdot L^H$".}
\end{equation}
This is the first part~\eqref{focusingcriterion} of the focusing criterion from the Theorem.

Actually, as~$L^H$ is normalised by~$F$, and~$M$ is open of finite index in~$L^H$, we know by Lemma~\ref{Lemma(F)}, that~$M$
contains~$M^F=\bigcap_{f\in F} fM f^{-1}$ as an open subgroup of finite index normalised by~$F$. We have identity of classes of sequences in~$F$
\[
O(1)\cdot(H^\prime\cap F)\cdot L^H=O(1)\cdot(H^\prime\cap F)\cdot M=O(1)\cdot(H^\prime\cap F)\cdot M^F.
\]

We now move to proving~\eqref{finefocusing}. We can write
\[g_i=b_i\cdot z_i\cdot m_i\]
with a bounded sequence~$(b_i)_{i\geq0}\in O(1)$ in~$G$, a sequence~$(z_i)_{i\geq0}$ in~$(H^\prime\cap F)$
and~$(m_i)_{i\geq0}$ in~$M$. Then
\[\widehat{g}_i=\widehat{b}_i\cdot \widehat{z}_i\]
is the corresponding decomposition in~$\widehat{G}$.
From~\eqref{liftlimit} we deduce
\begin{equation}
\lim_{i\to\infty} \widehat{g_i}\cdot \widehat{\mu}_\Omega=\widehat{\mu}_\infty.
\end{equation}

Contrasting with the initial situation, the translating elements are made of a bounded part and a part~$\widehat{z}_i$ 
``commuting with~$\Omega$''. More precisely, the support of~$ \widehat{g_i}\cdot \widehat{\mu}_\Omega$ is
\[\overline{\widehat{b}_i\cdot \widehat{z}_i \cdot\Omega\cdot (\Gamma_N \Lpp)/(\Gamma_N \Lpp)}=\widehat{b}_i\cdot \overline{\Omega}\cdot \widehat{z}_i\cdot (\Gamma_N \Lpp)/(\Gamma_N \Lpp).\]
Both the~$\widehat{b_i}$ and~$\Omega$ are bounded. If the sequence of cosets~$\left(\widehat{z}_i\cdot (\Gamma_N \Lpp)\right)_{i\geq0}$ is not bounded, then one cannot have convergence of measures, but to the zero measure, which is not the case. 
Hence~$(\widehat{z}_i)_{i\geq0}$ is both~$O(1)\cdot (\Gamma_N \Lpp)$ and in~$H^\prime\cap F$.

We stress that the orbit of~$H^\prime\cap F$ on~$X=\overline{HF/(\Gamma_N\Lpp})$ need not be closed in general. Let~$x_\infty$ be an 
adherence value of the bounded sequence~$(\widehat{z}_i\cdot (\Gamma_N \Lpp))_{i\geq0}$ of cosets. Then~$x_\infty$ belongs to the 
closed orbit~$N/(\Gamma_N\Lpp)$. It can be lifted to an element~$n_\infty$ of~$N$ in the topological closure of~$(H^\prime\cap F)$. 

Let us pass to a subsequence corresponding to this adherence value. Then we have
\[\lim_{i\to\infty} \widehat{z}_i\cdot\widehat{\mu}_\Omega=\mu_{\Omega\cdot n_\infty\Gamma_N\Lpp}\]
where the right hand side is the image measure of~$\mu|_\Omega$ via~
\[\Omega\to \overline{HF/(\Gamma_N\Lpp)}:\omega\mapsto\omega \cdot n_\infty(\Gamma_N\Lpp).\]

We can deduce that 
\[\lim_{i\to\infty}\widehat{b_i}\cdot \mu_{\Omega\cdot n_\infty\Gamma_N\Lpp}=\widehat{\mu}_\infty.\]
This last part of the proof may depend on~$\Omega$.

The sequence~$(b_i)_{i\geq0}$ is bounded in~$G$ and contains a subsequence of the form~$g_\infty o(1)$ 
where~$g_\infty$ is an adherence value in~$G$. We conclude to~\eqref{finefocusing}.

Moreover we obtain
\[\widehat{\mu}_\infty=g_\infty\mu_{\Omega\cdot n_\infty\Gamma_N\Lpp}.\]
Which, recalling how~$\widetilde{\mu}_\infty$ and hence~$\mu_\infty$ are characterised by~$\widehat{\mu}_\infty$,
gives~\eqref{limitformula}.

We claim the statement of Theorem~\ref{Theorem} reflects what we have proved so far.
\end{proof}
This concludes the proof of Theorem~\ref{Theorem}, up to the statements we postponed to the next section.

\subsection{Proof of Theorem~\ref{Theorem} -- Postponed statements}

\subsubsection{A Variant of~\cite[Theorem~2.1]{LemmaA}}
Here we provide a variant of~\cite[Theorem~2.1]{LemmaA} which differs in two ways. Firstly we refer to the (product) norm
$$\left(x_v\right)_{v\in S}\mapsto\max_{v\in S}\Nm{x_v}$$
rather that the product function
$$\left(x_v\right)_{v\in S}\mapsto\prod_{v\in S}\Nm{x_v},$$
the latter which may not be proper. Secondly we will state it for any bounded open subset~$\Omega$, instead of a product~$\prod_{v\in S}\Omega_{v}$.

This Theorem is not used in the proof of Theorem~\ref{Theorem}. It is used to remove hypothesis~\eqref{Hypo} in the Theorems~\ref{Theointro} and~\ref{Theointro2} from the Introduction (see~{\S}\ref{analytic stability}).
\begin{theorem}\label{thm21bis} The Theorem~2.1 from~\cite{LemmaA} holds with the following modifications\footnote{Other than the syntactically erroneous~"given".} applied together. We use the set~$Y=Y_S$ (minding the typo)
\begin{itemize}
\item Let~$\Omega$ be any bounded open subset of~$H$;
\item the concluding formula~\cite[(9)]{LemmaA} is replaced by
\begin{equation}
\forall y\in Y,~\forall\left(x_v\right)_{v\in S}\in V,
\sup_{\omega=\left(\omega_v\right)_{v\in S}\in\Omega}
\max_{v\in S}\Nm{\rho_v(y\cdot\omega_v)(x_v)}_v
\geq
\max_{v\in S}\left.\Nm{x_v}_v\right/c.
\end{equation}
\end{itemize}
\end{theorem}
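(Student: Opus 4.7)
The plan is to derive this variant directly from the original Theorem~2.1 of~\cite{LemmaA} by a two-step reduction: first specialise~$\Omega$ to a product subset, then convert the product-norm conclusion into a max-norm one via a place-by-place analysis of~$\rho$. Since the inequality to be proved is a lower bound on a supremum, it is enough to establish it after replacing~$\Omega$ by any well-chosen bounded open subset contained in~$\Omega$.

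First I would observe that any non-empty bounded open subset~$\Omega$ of~$H=\prod_{v\in S} H_v$ contains a product~$\Omega_0=\prod_{v\in S}\Omega_v$ with each~$\Omega_v$ a non-empty bounded open subset of~$H_v$: this is immediate from the definition of the product topology on~$H$. Shrinking if necessary, we may assume each~$\Omega_v$ is Zariski dense in~$H_v$. Since passing from~$\Omega_0$ to~$\Omega$ only enlarges the supremum, it suffices to prove the conclusion with~$\Omega$ replaced by~$\Omega_0$.

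Second, because~$V$ is a~$\Q_S$-module and~$\rho$ is~$\Q_S$-linear, we have a canonical decomposition $V=\prod_{v\in S}V_v$ with $\rho=\prod_{v\in S}\rho_v$, each~$\rho_v\colon G_v\to GL(V_v)$ a~$\Q_v$-linear representation and each norm~$\Nm{-}_v$ on~$V_v$. Now the~$S$-adic statement~\cite[Theorem~2.1]{LemmaA} is assembled from the archimedean input~\cite{Lemma} and the ultrametric input~\cite{Lemmap} applied place by place, and what those inputs deliver is exactly a per-place analytic stability: for each~$v\in S$ there exists~$c_v>0$ such that, denoting by~$Y_v$ the image of~$Y$ in~$G_v$,
\[
\forall y_v\in Y_v,\ \forall x_v\in V_v,\ \sup_{\omega_v\in\Omega_v}\Nm{\rho_v(y_v\omega_v)x_v}_v\geq \Nm{x_v}_v/c_v.
\]
Setting~$c=\max_{v\in S}c_v$, taking a maximum over~$v\in S$ on both sides yields
\[
\max_{v\in S}\sup_{\omega_v\in\Omega_v}\Nm{\rho_v(y_v\omega_v)x_v}_v\geq \max_{v\in S}\Nm{x_v}_v\,/\,c.
\]
Because~$\Omega_0$ is a product, the independence of the coordinates~$\omega_v$ gives the elementary identity $\sup_{\omega\in\Omega_0}\max_v f_v(\omega_v)=\max_v\sup_{\omega_v\in\Omega_v}f_v(\omega_v)$, so the previous display is the desired inequality for~$\Omega_0$, and a fortiori for~$\Omega$.

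The main obstacle I foresee is the justification that the per-place bound used in the second paragraph is truly accessible: the published version of~\cite[Theorem~2.1]{LemmaA} is stated as a single inequality with a product norm, and one cannot in general extract a per-place bound from a product bound by ``freezing'' the other coordinates (the product vanishes as soon as one factor does). The resolution is that the per-place statements are genuine intermediate outputs of the proof of~\cite[Theorem~2.1]{LemmaA}, rather than consequences of its final display; everything else in the argument---choice of a product~$\Omega_0\subseteq\Omega$, commutation of~$\sup$ and~$\max$, taking a single constant~$c=\max_v c_v$---is elementary.
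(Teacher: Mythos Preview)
Your proposal is correct and follows essentially the same approach as the paper: reduce to a product subset~$\Omega_0=\prod_v\Omega_v\subseteq\Omega$, invoke the per-place analytic stability bounds (with constants~$c_v$) that underlie the proof of~\cite[Theorem~2.1]{LemmaA}, take~$c=\max_v c_v$, and commute~$\sup_{\omega\in\Omega_0}$ with~$\max_{v\in S}$. Your final paragraph correctly identifies the one nontrivial point---that the per-place inequalities are inputs to, not consequences of, the product-norm statement in~\cite{LemmaA}---and this is exactly how the paper proceeds as well.
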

\begin{proof}[Proof of Theorem~\ref{thm21bis}]
As~$\Omega$ is nonempty open in~$H$, it contains a basic nonempty open subset of the form~$\prod_{v\in S}\Omega_v$, as these form a basis of the product topology. As each~$\Omega_v$ is nonempty and open, it is Zariski dense in~$H_v$.

The proof then goes forward analogously as in~\cite[p.VI-6/124]{LemmaA}. We have at each place
\[\forall y\in Y_v,~\forall x_v\in V_v, \sup_{\omega\in\Omega_v}\Nm{y_v \omega_y x_v}_v\leq\Nm{x_v}_v/c_v.\] 
We deduce the inequality
\[
\max_{v\in S}\sup_{\omega\in\Omega_v}\Nm{y_v \omega_y x_v}_v\geq \max_{v\in S}\left(\Nm{x_v}_v\middle/c_v\right)
\]
and finally
\[
\sup_{\omega\in\Omega_v}\max_{v\in S}\Nm{y_v \omega_y x_v}_v\geq \left.\left(\max_{v\in S}\Nm{x_v}_v\right)\middle/\left(\max_{v\in S}c_v\right)\right.	
\]
which gives the conclusion with~$c=\max_{v\in S}c_v$.
\end{proof}

\subsubsection{Lifting weak convergence of probabilities outside a negligible closed set.}
This property is used in the main proof to lift convergence~$g_i\mu_\Omega\to\mu_\infty$ 
inside~$G/\Gamma$ to a convergence~$g_i\widetilde{\mu}_\Omega\to\widetilde{\mu}_\infty$ in~$G/\Gamma_N$,
using the map~\eqref{finalbij}.

\begin{proposition}\label{Propoliftcvg}
Let~$X\xrightarrow{\pi} Y$ be a continuous map between locally compact spaces inducing an homeomorphism from an open subset~$U_X$ of~$X$ to an open subset~$U_Y$ of~$Y$.

Let~$(\mu_i)_{i\geq0}$ be a sequence of probabilities on~$Y$ with weak limit a probability~$\mu_\infty$, such that~$U_Y$ is of full measure for~$\mu_\infty$.
Then for any sequence~$(\widetilde{\mu}_i)_{i\geq0}$ of probabilities on~$X$ such that
$$\forall i\geq 0, \pi_\star\tilde{\mu}_i={\mu}_i,$$
the weak limit~$\tilde{\mu}_\infty:=\lim (\tilde{\mu}_i)_{i\geq0}$ exists and satisfies
$$\pi_\star\tilde{\mu}_\infty={\mu}_\infty.$$
\end{proposition}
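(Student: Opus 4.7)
The plan is to produce the candidate limit $\tilde{\mu}_\infty$ explicitly, then deduce weak convergence of the whole sequence from tightness combined with uniqueness of weak limit points. Define $\tilde{\mu}_\infty$ as the pushforward of $\mu_\infty|_{U_Y}$ along $\sigma := (\pi|_{U_X})^{-1} : U_Y \to U_X$, extended by zero outside $U_X$. Since $\mu_\infty(U_Y) = 1$, this gives a probability on $X$ supported in $U_X$ with $\pi_\star \tilde{\mu}_\infty = \mu_\infty$, and it is the unique probability on $X$ supported in $U_X$ with this pushforward, because $\pi|_{U_X}$ is a homeomorphism.

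Next, I would establish tightness of $(\tilde{\mu}_i)_{i\geq 0}$. Given $\varepsilon>0$, inner regularity of $\mu_\infty$ together with the full-mass hypothesis $\mu_\infty(U_Y)=1$ and local compactness of $Y$ yield a compact $K_Y \subseteq U_Y$ and a relatively compact open $V_Y \supseteq K_Y$ with $\overline{V_Y} \subseteq U_Y$ and $\mu_\infty(K_Y) > 1-\varepsilon$. Set $K_X := \sigma(\overline{V_Y})$, a compact subset of $U_X$. By the Portmanteau characterisation of weak convergence and the identity $\pi_\star \tilde{\mu}_i = \mu_i$, one gets $\tilde{\mu}_i(\pi^{-1}(\overline{V_Y})) = \mu_i(\overline{V_Y}) > 1-\varepsilon$ for all large $i$. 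The key point is that the mass $\tilde{\mu}_i$ places on $\pi^{-1}(\overline{V_Y})$ in fact sits on $K_X$; this is where the interpretation of the homeomorphism hypothesis is used, and, in the application of the main proof, it is guaranteed by the no-self-intersection bijection~\eqref{finalbij} and the fact that each $\tilde{\mu}_i$ is supported inside $\overline{\Omega}N/\Gamma_N$, which projects injectively to $\overline{\Omega}N\Gamma/\Gamma$ above $U_Y$.

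Finally, I would invoke Prokhorov's theorem on the tight sequence to extract weak limit points. Let $\tilde{\nu}$ be any such limit point, attained along a subsequence $(\tilde{\mu}_{i_k})$. Continuity of the pushforward under the continuous map $\pi$ forces $\pi_\star \tilde{\nu} = \mu_\infty$. The same tightness computation shows $\tilde{\nu}$ is concentrated on $U_X$, so the uniqueness of the lift above yields $\tilde{\nu} = \tilde{\mu}_\infty$. A tight sequence with a unique weak limit point converges weakly to that point, giving the proposition. The main obstacle is the tightness step, specifically the control that $\tilde{\mu}_i$ does not leak mass into $\pi^{-1}(U_Y) \setminus U_X$ or outside $\pi^{-1}(U_Y)$; the first is absorbed by the implicit injectivity of $\pi$ between the relevant subsets of $X$ and $Y$, while the second follows from $\mu_i(Y\setminus U_Y) \to 0$, a consequence of $\mu_\infty(U_Y)=1$ combined with weak convergence and outer regularity.
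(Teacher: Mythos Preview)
Your proof is correct and follows essentially the same strategy as the paper: establish tightness of the lifts by pulling back a compact subset of~$U_Y$ to a compact of~$X$, then invoke continuity of~$\pi_\star$ together with uniqueness of the lift supported on~$U_X$ to identify every subsequential limit. You are right to flag the implicit reading~$\pi^{-1}(U_Y)=U_X$ (equivalently, fibres over~$U_Y$ are singletons); the paper's own argument uses it in the same way when asserting that~$\pi^{-1}(K\setminus V)$ is compact, and this is exactly what holds in the application via~\eqref{finalbij}.
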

\begin{proof} 
As~$\mu_\infty$ is a probability, the sequence~$(\mu_i)_{i\geq0}$ is tight: for any~$\epsilon>0$, there exists a compact~$K$ in~$Y$ such that
$$\limsup_{i\geq0} \mu_i (K) >1-\epsilon $$
Denote~$Z_X$ and~$Z_Y$ the complementary closed subsets of~$X$  and~$Y$ respectively. As
$$\limsup_{i\geq0} \mu_i (K\cap Z_Y) =0,$$ we can erase from~$K$ a small enough neighbourhood~$V$ of~$K\cap Z_Y$ in~$K$ and get, for the compact~$K\setminus V$,
\begin{equation}\limsup_{i\geq0} \mu_i (K\setminus V) >1-2\epsilon \label{tightout}\end{equation}

Take a sequence~$(\tilde{\mu}_i)_{i\geq0}$ as in the statement. Let us prove that this is a tight sequence. 
From
$$\tilde{\mu_i} \left(\stackrel{-1}{\pi}(K\setminus V)\right)=\mu_i (K\setminus V),$$
we can lift~\eqref{tightout} to~$X$:
$$\limsup_{i\geq0} \tilde{\mu_i} \left(\stackrel{-1}{\pi}(K\setminus V)\right) >1-2\epsilon.$$
Note that~$\stackrel{-1}{\pi}(K\setminus V)$ is a compact because~$\pi$ is an homeomorphism on~$U_X$.
As~$\epsilon$ is arbitrarily small, this proves the tightness of~$(\tilde{\mu}_i)_{i\geq0}$.

Let~$\mu$ be any limit point of the sequence~$(\tilde{\mu}_i)_{i\geq0}$. This is necessarily a probability. Then, by continuity of~$\pi_\star$ from the space of probabilities of~$X$ towards the space of probabilities of~$Y$, we have
$$\pi_\star(\mu)=\mu_\infty.$$

Consequently~$\mu$ is of full measure on~$U_X=\stackrel{-1}{\pi}(U_Y)$, and~$\mu$ is characterised by its restriction to~$U_X$, which is necessarily~$\stackrel{-1}{\pi}_\star({\mu_\infty}|_{U_Y})$.

Finally, the sequence~$\tilde{\mu}_\infty$ is tight, and has a unique limit point. This is hence a limit of the whole sequence~$(\tilde{\mu}_i)_{i\geq0}$. This concludes.

\end{proof}

\subsubsection{Lifting unipotents}\label{subsubsection-liftuni} This statement explain the behaviour of the property of being a unipotent elements in relation with passing to a quotient group, for algebraic linear group over a field of characteristic~$0$. This amounts to the Jordan decomposition in algebraic groups.
We refer to~{\S}\ref{secnotations} for the~${}^+$ notation.

We then give a consequence which is used in our main proof to relate the maximality property satisfied by~$W$ with the induction step~{\S}{\S}\ref{induction}--\ref{provingbdd}.

%Algebrique de trop. %TODO Hypothèse en trop ?
\begin{proposition}\label{propoliftuni}
 Let~$G$ be an linear algebraic group over a characteristic~$0$ local field~$k$, with a subgroup~$H$, and a normal subgroup~$N$ of~$H$, with subquotient group~$Q=H/N$. 

Then 
\begin{enumerate}
\item \label{propplus1} we have~${H(k)}^+\subseteq{G(k)}^{+}\cap H(k)$;
\item \label{propplus2} write~$\varphi:H\to Q$ the quotient map, we have~${Q(k)}^+\subseteq\varphi\left({H(k)}^{+}\right)$.
\end{enumerate}
\end{proposition}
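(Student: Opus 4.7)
The plan is to reduce to a Lie-algebra/unipotent-radical statement that is clean in characteristic zero.

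Part~\eqref{propplus1} is essentially a tautology. Any algebraic subgroup $U \subseteq H$ defined over $k$ of dimension at most one at every place (i.e. a one-parameter algebraic unipotent subgroup of $H$) is automatically a one-parameter algebraic unipotent subgroup of $G$ defined over $k$, so $U(k) \subseteq G(k)^+ \cap H(k)$. Since $H(k)^+$ is the subgroup generated by such $U(k)$, the inclusion $H(k)^+ \subseteq G(k)^+ \cap H(k)$ follows.

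For part~\eqref{propplus2}, I would fix a one-parameter algebraic unipotent subgroup $\overline{U} \subseteq Q$ defined over $k$; it suffices to prove $\overline{U}(k) \subseteq \varphi(H(k)^+)$. We may assume $\overline{U}$ is one-dimensional, hence isomorphic to $\mathbb{G}_a$. Set $\tilde{U} := \varphi^{-1}(\overline{U})$ and let $\tilde{U}^\circ$ be its identity component, both algebraic subgroups of $H$ defined over $k$. In characteristic zero, the unipotent radical $R$ of $\tilde{U}^\circ$ is defined over $k$ and admits a Levi complement $L$ defined over $k$ (Mostow's theorem), so $\tilde{U}^\circ = R \rtimes L$. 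The image $\varphi(L)$ is reductive as a quotient of $L$, yet it is a subgroup of the unipotent group $\overline{U}$, forcing $\varphi(L) = \{e\}$ and hence $L \subseteq N$. Since $\overline{U} \simeq \mathbb{G}_a$ is connected and $\tilde{U}/\tilde{U}^\circ$ is finite, $\varphi(\tilde{U}^\circ) = \overline{U}$, so the algebraic map $R \to \overline{U}$ is surjective over $\bar k$. As $R$ and $\overline{U}$ are unipotent in characteristic zero, the vanishing of $H^1(k,-)$ for unipotent groups (equivalently, the exponential gives a $k$-isomorphism of varieties) promotes this to a surjection $R(k) \twoheadrightarrow \overline{U}(k)$.

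It remains to check $R(k) \subseteq H(k)^+$. But $R$ is a connected unipotent algebraic group over $k$ in characteristic zero; the exponential identifies $\lie{r}(k)$ with $R(k)$, and each $X \in \lie{r}(k)$ generates a one-parameter subgroup $\{\exp(tX) : t \in \mathbb{G}_a\}$ which is a $\mathbb{G}_a$-subgroup of $H$ defined over $k$, i.e.\ a one-parameter algebraic unipotent subgroup of $H$ over $k$. Hence $R(k) \subseteq H(k)^+$ and $\overline{U}(k) = \varphi(R(k)) \subseteq \varphi(H(k)^+)$. Taking generators yields $Q(k)^+ \subseteq \varphi(H(k)^+)$.

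The main delicacies lie entirely in the characteristic-zero descent content invoked: that the unipotent radical of a $k$-group is defined over $k$, that a Levi decomposition exists over $k$ (not just over $\bar k$), and that surjections between unipotent $k$-groups remain surjective on $k$-points. Each is standard in characteristic zero, but deserves an explicit reference (e.g.\ Hochschild or Mostow) to make the argument self-contained.
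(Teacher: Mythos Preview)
Your proof is correct, but takes a genuinely different route from the paper for part~\eqref{propplus2}. The paper argues element-by-element via the Jordan decomposition: given a unipotent $u\in Q(k)$, one first cites that $\varphi(H(k))$ has finite index in $Q(k)$ and hence (by Borel--Tits, Corollaire~6.7) contains $Q(k)^+$; picking any preimage $g\in H(k)$ of $u$ and writing $g=g_sg_u$, functoriality of Jordan decomposition forces $\varphi(g_u)=u$, and $g_u\in H(k)^+$ since it is unipotent in characteristic zero. Your argument instead lifts whole one-parameter subgroups: you pull back $\overline{U}\cong\mathbb{G}_a$, take a Levi decomposition of the preimage over $k$, observe the Levi factor dies in $\overline{U}$, and then use $H^1$-vanishing for unipotent groups to get surjectivity of $R(k)\to\overline{U}(k)$. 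The paper's approach is shorter once one is willing to cite the Borel--Tits result on finite-index subgroups containing $(-)^+$, while yours trades that citation for Mostow's theorem and additive Hilbert~90; your version has the minor advantage of making the characteristic-zero content (Levi over $k$, exponential isomorphism, connectedness of unipotent kernels) fully explicit, which is exactly what your closing paragraph flags.
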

\noindent(cf~\cite[{\S}III 4.3]{SerreLNM5} and~\cite[{\S}3.18-19]{BorelTits} \emph{à propos} local fields of  non zero characteristic.)
\begin{proof}
Let~$u$ be unipotent element of~$H(k)$. It is in particular a unipotent element of~$G(k)$ which belongs to~$H(k)$. Thus~${H(k)}^{+}\cap G(k)$ contains a generating set of~${H(k)}^+$. It finally contains~${H(k)}^+$. (NB: The converse inclusion may not hold!)

Let~$u$ be a unipotent element of~$Q(k)$. Note that~$\varphi\left({H}(k)\right)$ is a finite index subgroup of~$Q(k)$ (\cite[{\S}6.4: Proposition~6.13 p316; Corollary~2 {\S}6.4 p.319]{PR}).  By~\cite[Corollaire~6.7]{BorelTits},~${H}(k)\cdot N/N$ contains~${Q(k)^+}$. Thus there exists~$g$ in~$H(k)$ such that~$\phi(g)=u$.

Let~$g=g_s\cdot g_u$ the Jordan decomposition of~$g$ (\cite[I.4.4 Theorem,~(1)]{BorelLAG}). By~\cite[I.4.4 Theorem,~(4)]{BorelLAG},~$\phi(g_s)\phi(g_u)$ is the Jordan decomposition of~$\phi(g)$. But~$\phi(g)=u$ has manifestly decomposition~$u=e\cdot u$, where~$e$ is the neutral element of~$Q(k)$. We get~$\phi(g_u)=u$ with a unipotent element~$g_u$ of~$H(k)$  (and we get~$\phi(g_s)=e$). In particular,~$g_u$ belongs to~${H(k)}^+$. 

As~$u$ was arbitrary, the image of~${H(k)}^+$ in~$Q(K)$ contains the unipotent elements of the latter, hence generates~${Q(k)}^+$.
\end{proof}
We deduce immediately the following.
\begin{corollary}\label{corliftuni}
Let~$W$ be a subgroup of~$H(k)$ containing~$N(k)$ such that~$W^+\subseteq N(k)$. Then~$\varphi(W)^+$ is reduced to the neutral element~$e$ of~$Q(k)$.
\end{corollary}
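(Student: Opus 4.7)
The plan is to show that every one-parameter unipotent $k$-algebraic subgroup $V$ of $Q$ with $V(k) \subseteq \varphi(W)$ is trivial. By the definition of ${}^+$ recalled in Section~\ref{secnotations}, $\varphi(W)^+$ is generated by the $k$-points of such subgroups, so this suffices.

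Fix such a $V$ and a non-trivial $v \in V(k)$. First I would lift $v$ to a unipotent element of $W$. Since $v \in \varphi(W)$, choose $w \in W$ with $\varphi(w) = v$. Apply the Jordan decomposition $w = w_s w_u$ in $H(k)$; because $\varphi$ preserves Jordan decomposition and the unipotent $v$ decomposes as $v = e \cdot v$, one deduces $\varphi(w_s) = e$, i.e.\ $w_s \in N(k)$. The hypothesis $N(k) \subseteq W$ then gives $w_u = w_s^{-1} w \in W$, providing a unipotent lift of $v$ inside $W$. This mirrors exactly the argument used in the proof of Proposition~\ref{propoliftuni}(\ref{propplus2}), with $H(k)$ replaced by $W$.

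Next I would promote $w_u$ to a full one-parameter unipotent subgroup lying in $W$. In characteristic zero, $w_u = \exp(X)$ for a nilpotent $X \in \mathrm{Lie}(H)$, and $\tilde V := \exp(k \cdot X)$ is a one-parameter unipotent $k$-subgroup of $H$ lifting $V$. For any $t \in k$, the element $\exp(tX) \in \tilde V(k)$ projects to $\exp(t \log v) \in V(k) \subseteq \varphi(W)$, so $\exp(tX) = w' \cdot n$ for some $w' \in W$ and $n \in \ker \varphi = N(k) \subseteq W$; whence $\exp(tX) \in W$, and therefore $\tilde V(k) \subseteq W$.

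Finally, $\tilde V$ being a one-parameter unipotent $k$-subgroup contained in $W$, the definition of $W^+$ gives $\tilde V(k) \subseteq W^+$, and the standing hypothesis yields $\tilde V(k) \subseteq N(k)$. Applying $\varphi$ yields $V(k) = \varphi(\tilde V(k)) = \{e\}$, contradicting $v \ne e$. The one delicate point is not the Jordan decomposition itself but the inclusion $\tilde V(k) \subseteq W$: everything hinges on the saturation of $W$ under the quotient map, guaranteed by the inclusion $N(k) \subseteq W$, which is what rescues us from having to assume that $W$ is Zariski closed.
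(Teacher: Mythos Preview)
Your proof is correct and follows essentially the same route as the paper's: lift a unipotent element of $\varphi(W)$ via the Jordan decomposition, use $N(k)\subseteq W$ to force the unipotent part into $W$, and then conclude it lies in $W^+\subseteq N(k)$, so its image is trivial. You are in fact more careful than the paper on one point: the paper asserts directly that the unipotent lift $g_u$ belongs to $W^+$, whereas you explicitly verify that the entire one-parameter subgroup $\tilde V(k)=\exp(k\cdot X)$ through $g_u$ lies in $W$ (again via the saturation $N(k)\subseteq W$), which is precisely what the definition of $W^+$ demands.
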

\begin{proof}Assume by contradiction that~$\varphi(W)$ contain a non trivial unipotent one parameter group~$U$. For~$u$ in~$U$, let~$g$ be a preimage of~$u$ in~$H(k)$. Let~$g=g_s\cdot g_u$ be the Cartan decomposition of~$g$ in~$H(k)$. Then as above,~$\varphi(g_u)=u$ and~$\varphi(g_s)=e$. Consequently~$g_s$ belongs to~$N(k)$ hence to~$W$. Thus~$g_u={g_s}^{-1}\cdot g$ belongs to~$W$ too. It is contained in~$W^+$ and so in~$N(k)$. Hence~$u=\phi(g_u)$ is trivial. Arguing for each~$u$ in~$U$, we see that~$U$ is trivial. Contradiction.
\end{proof}

\subsubsection{Creating unipotent invariance} Here is a version of a standard argument to obtain unipotent groups stabilising a limit of translates of homogeneous measures.
\begin{proposition}\label{unipotescence} Let a locally compact group~$G$ acts continuously on a locally compact topological space~$X$. Let~$(g_i)_{i\geq0}$ be a sequence in~$G$, let~$\mu$ be a probability on~$X$, and assume the sequence of translated measures~$(g_i\cdot\mu)$ is weakly converging toward some limit~$\mu_\infty$. Assume there is a sequence~$h_i$ converging to the neutral element such that
\begin{itemize}
\item the sequence~$h_i\cdot \mu$ converges strongly to~$\mu$;
\item the sequence~$g_i h_i {g_i}^{-1}$ has a limit point~$g_\infty$.
\end{itemize}

Then~$\mu_\infty$ is fixed under translation by~$g_\infty$.
\end{proposition}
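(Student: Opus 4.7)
The plan is to exploit the algebraic identity $g_i h_i = (g_i h_i g_i^{-1})\cdot g_i$, which lets us express the single sequence $g_i h_i\cdot\mu$ in two different ways and match their limits. First I would, after passing to a subsequence, assume $g_i h_i g_i^{-1}\to g_\infty$ in $G$ (which we may by the limit-point hypothesis; the convergence $g_i\cdot\mu\to\mu_\infty$ is preserved by subsequencing). Then the proof reduces to showing
\[
g_i h_i\cdot\mu \xrightarrow[i\to\infty]{} \mu_\infty \quad\text{and}\quad (g_ih_ig_i^{-1})(g_i\cdot\mu)\xrightarrow[i\to\infty]{} g_\infty\cdot\mu_\infty,
\]
and then invoking uniqueness of weak limits to conclude $g_\infty\cdot\mu_\infty=\mu_\infty$.

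For the first convergence, I would write $g_i h_i\cdot\mu = g_i\cdot(h_i\cdot\mu)$ and use that the $G$-action on finite Radon measures preserves total variation (each $g\in G$ acts by a homeomorphism). Since $h_i\cdot\mu\to\mu$ strongly, hence in total variation, we obtain $\Nm{g_i\cdot(h_i\cdot\mu-\mu)}_{\mathrm{TV}}=\Nm{h_i\cdot\mu-\mu}_{\mathrm{TV}}\to 0$. Combined with $g_i\cdot\mu\to\mu_\infty$ weakly, this yields $g_ih_i\cdot\mu\to\mu_\infty$ weakly.

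For the second convergence, I would use the joint continuity of the action of $G$ on the space of probability measures endowed with the weak topology: if $a_i\to a_\infty$ in $G$ and $\nu_i\to\nu_\infty$ weakly with all $\nu_i$ and $\nu_\infty$ probabilities, then $a_i\cdot\nu_i\to a_\infty\cdot\nu_\infty$ weakly. Apply this to $a_i=g_ih_ig_i^{-1}\to g_\infty$ and $\nu_i=g_i\cdot\mu\to\mu_\infty$. To verify joint continuity, fix a continuous compactly supported test function $f$ and decompose
\[
\int f\,d(a_i\cdot\nu_i) - \int f\,d(a_\infty\cdot\nu_\infty) = \int\bigl(f(a_i y)-f(a_\infty y)\bigr)\,d\nu_i(y) + \int f(a_\infty y)\,d\bigl(\nu_i-\nu_\infty\bigr)(y).
\]
The second integral tends to $0$ by weak convergence applied to the compactly supported continuous function $y\mapsto f(a_\infty y)$. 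The first integral is handled using tightness of $(\nu_i)$ (automatic since the limit is a probability) to find a compact $K$ carrying mass $\geq 1-\epsilon$ uniformly in $i$, together with the fact that $a_i y\to a_\infty y$ uniformly in $y\in K$ by continuity of the action and compactness, so that $|f(a_i y)-f(a_\infty y)|$ is uniformly small on $K$ for $i$ large; the contribution outside $K$ is bounded by $2\Nm{f}_\infty\epsilon$.

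The main technical point is the joint continuity lemma; it is standard but deserves care because the test function effectively changes with $i$. Once this is in hand, the argument is a one-line manipulation. No non-trivial input from homogeneous dynamics is required — the statement is a purely topological dynamical fact that explains why unipotent-like invariance of limit measures arises whenever we can conjugate a small perturbation $h_i$ of the identity out to a non-trivial limit via the translators $g_i$.
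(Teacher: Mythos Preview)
Your proof is correct and follows essentially the same strategy as the paper: exploit the identity $g_ih_i=(g_ih_ig_i^{-1})g_i$ and compute the weak limit of $g_ih_i\cdot\mu$ in two ways, one via strong convergence of $h_i\cdot\mu$, the other via $g_ih_ig_i^{-1}\to g_\infty$ acting on $g_i\cdot\mu\to\mu_\infty$.

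The only noteworthy difference is in how the second convergence is handled. You prove a joint-continuity lemma for $G\times\mathrm{Prob}(X)\to\mathrm{Prob}(X)$ using tightness of $(\nu_i)$ to localize to a compact~$K$ and then uniform convergence of $f(a_i\,\cdot\,)$ on~$K$. The paper bypasses tightness entirely: for $f\in C_c(X)$ and $a\to e$ one has $\Nm{f\circ L_a - f}_\infty\to 0$ (a compactness argument on $U^{-1}\Supp(f)$ for a compact neighbourhood~$U$ of~$e$), and this $L^\infty$ estimate is uniform over \emph{all} probabilities, so no tightness is needed. Your route is perfectly valid but introduces an extra step; the paper's is marginally more direct.
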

\begin{proof} Let~$\varphi$ be a test function: continuous with compact support. We write
$$g_i h_i {g_i}^{-1} g_i\mu (\varphi)=g_i h_i \mu (\varphi) = g_i \mu (\varphi) +o(1)\Nm{\varphi}$$
by strong convergence. But~$g_i \mu (\varphi)=\mu_\infty(\varphi)+o(1)$, by weak convergence. We get
\begin{equation}\label{combi1} g_i h_i {g_i}^{-1} g_i\mu (\varphi)=\mu_\infty(\varphi)+o(1).\end{equation}
On the other hand, we can write~$g_i h_i {g_i}^{-1} =o(1)g_\infty$. Note that, by uniform continuity of~$\varphi$, one has 
$\Nm{o(1)\cdot\varphi-\varphi}=o(1)$. Hence, for any probability~$\nu$, we have
$$o(1)\nu(\varphi)=\nu(o(1)\varphi)=\nu(\varphi)+o(1),$$
with error term independant of~$\nu$. Applying to~$\nu=g_i\mu$, we get
$$g_i h_i {g_i}^{-1} g_i\mu (\varphi)=g_\infty o(1) g_i\mu (\varphi)=g_\infty g_i\mu (\varphi) + o(1).$$
But~$g_i\mu$ converges weakly to~$\mu_\infty$, and
\begin{equation}\label{combi2}
g_\infty g_i\mu (\varphi) =g_i\mu ({g_\infty }^{-1}\varphi)= \mu_\infty ({g_\infty }^{-1}\varphi)+o(1)=g_\infty\mu_\infty (\varphi) +o(1).
\end{equation}
Combining~\eqref{combi1} and~\eqref{combi2} yields
$$\mu_\infty(\varphi)+o(1)=g_\infty\mu_\infty (\varphi) +o(1).$$
This concludes.
\end{proof}

\subsubsection{Finite generation and finite index subgroups} We know that~$\Lpp$ is a open finite index subgroup of~$L$.
The following argument is used in the main proof to establish the same relationship between~$\bigcap_{h\in H} h\Lpp h^{-1}$ 
with~$\bigcap_{h\in H} hLh^{-1}$. It follows from structure properties of linear group over real or $p$-adic fields. (see references given below
for the positive characteristic local fields).

\begin{lemma} \label{Lemma(F)}
Let~$M$ the topological group of rational points of an algebraic group over~$\Q_S$. For every integer~$n$, there are only finitely many open subgroups of index~$n$ in~$M$.
\end{lemma}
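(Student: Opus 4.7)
The plan is to show that the maximal continuous profinite quotient $\widehat{M}$ of $M$ is topologically finitely generated, then invoke the classical fact that a topologically finitely generated profinite group has only finitely many open subgroups of each finite index. Two reductions set this up. First, any open subgroup $H$ of index $n$ contains its normal core $\bigcap_{m \in M} m H m^{-1}$, an open normal subgroup of index dividing $n!$; since the finite group $M/N$ has only finitely many subgroups once such an $N$ is fixed, it is enough to count open \emph{normal} subgroups of each finite index, equivalently, continuous homomorphisms $M \to F$ for each finite $F$. Second, writing $M = \prod_{v \in S} \mathbf{M}(\Q_v)$, such a homomorphism is determined by its restrictions to the factors (whose images in $F$ must pairwise commute), so it is enough to treat each place $v$ separately.

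For archimedean $v$, the group $\mathbf{M}(\R)$ is semialgebraic and hence has only finitely many connected components. Its identity component is connected, so it admits no proper open subgroup; every open subgroup of $\mathbf{M}(\R)$ therefore contains the identity component and corresponds to a subgroup of the finite component group, so there are only finitely many open subgroups in total. For ultrametric $v = p$, use the Levi decomposition $\mathbf{M} = \mathbf{L} \ltimes \mathbf{U}$ over $\Q_p$. The unipotent radical $\mathbf{U}(\Q_p)$ admits no non-trivial continuous finite quotient: by induction on the nilpotency class via the center, this reduces to the abelian case, where $\mathbf{U}(\Q_p)$ is a divisible $\Q_p$-vector group. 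So continuous maps from $\mathbf{M}(\Q_p)$ to finite groups factor through the reductive Levi quotient $\mathbf{L}(\Q_p)$.

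For the reductive group $\mathbf{L}(\Q_p)$, the Cartan decomposition of Bruhat--Tits theory gives $\mathbf{L}(\Q_p) = K \Lambda K$, where $K$ is a maximal compact open subgroup and $\Lambda$ is a finitely generated semigroup in a maximal $\Q_p$-split torus. Hence $\mathbf{L}(\Q_p)$ is topologically generated by $K$ together with finitely many extra elements; and $K$, being a compact $p$-adic analytic group, contains by Lazard's theory a uniformly powerful pro-$p$ open subgroup of finite index and is therefore topologically finitely generated. Putting this together, $\mathbf{L}(\Q_p)$, and hence its maximal continuous profinite quotient, is topologically finitely generated, so the classical type $(F)$ theorem for finitely generated profinite groups closes the argument. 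I anticipate the main obstacle to be the rigorous combination of the Bruhat--Tits compact generation for $\mathbf{L}(\Q_p)$ with Lazard's theorem on compact $p$-adic Lie groups: each is classical in its own right, but they belong to somewhat different traditions, and the bookkeeping required to chase finite generation of the profinite completion through both the Levi and the Cartan decompositions must be done with care.
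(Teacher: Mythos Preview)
Your argument is correct and arrives at the same destination as the paper --- namely, that the maximal profinite quotient of each local factor is topologically finitely generated, hence of type~$(F)$ --- but the route you take at the ultrametric places is genuinely different from the paper's.

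The paper proceeds by a case-split inside each~$\mathbf{M}(\Q_p)$ (assumed Zariski connected): for groups whose radical is unipotent and whose Levi is of non-compact type, it invokes Borel--Tits~\cite[Cor.~6.7]{BorelTits} to show that every finite-index subgroup already contains~$M^+$; it handles split tori by a direct computation; and it treats anisotropic reductive groups as compact, hence profinite and topologically finitely generated via the convergence of the exponential on a small neighbourhood. The general case is then assembled from these pieces using the structure theory of~\cite[Prop.~6.14,~3.19]{BorelTits}. Your approach instead makes a single Levi reduction to the reductive quotient (using divisibility of~$\mathbf{U}(\Q_p)$) and then treats the reductive case uniformly via the Bruhat--Tits Cartan decomposition~$K\Lambda K$ together with Lazard's theory for the compact open~$K$. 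Your route is cleaner and more uniform, at the cost of invoking heavier machinery (Bruhat--Tits buildings and Lazard's structure theory for compact $p$-adic analytic groups), whereas the paper stays within the Borel--Tits framework throughout and is in that sense more elementary. Both are valid; the paper's parenthetical remark that the profinite quotient ``is topologically finitely generated'' is exactly the fact your argument establishes directly.

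One small point worth making explicit in your write-up: the Levi decomposition~$\mathbf{M}=\mathbf{L}\ltimes\mathbf{U}$ descends to a semidirect product at the level of~$\Q_p$-points because~$H^1(\Q_p,\mathbf{U})=0$ for unipotent~$\mathbf{U}$ in characteristic zero; and if~$\mathbf{M}$ is not Zariski connected, you should first reduce to~$\mathbf{M}^0$, whose~$\Q_p$-points have finite index in~$\mathbf{M}(\Q_p)$.
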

\begin{proof} We can argue for each place separatly. For an archimedean place, we know that the neutral component is a minimal open subgroup 
and is of finite index (\cite[{\S}3.3]{PR}). 

Let us turn to ultrametric places. We may assume~$M$ is (Zariski) connected. We start with three
cases.

By~\cite[Corollaire~6.7]{BorelTits}, every subgroup of finite index of~$M$ will contain~$M^+$.
(It settle the case of groups of non compact type whose radical is unipotent using~\cite[Proof of Proposition~6.14]{BorelTits})

The case of a split torus is easily directly checked.

Let us note that some neighborhood of the origin is finitely generated, using for instance the 
Lie group exponential which has a positive radius of convergence. In the case of an anisotropic
reductive group, which is compact (\cite[3.18-19]{BorelTits},\cite[{\S}3.1 Theorem~3.1]{PR}), the whole group will be finitely generated. 
It also profinite (locally profinite and compact). Hence we can apply~\cite[Proposition9]{SerreLNM5}.

To conclude in the general case, we use the structure theory from~\cite{BorelTits}: Proposition~6.14, its proof, and Proposition~3.19 (in characteristic~$0$,
the inseparable degree~$q$ is~$1$).
\end{proof}

\subsubsection{Boundedness criterion and Geometric stability} This kind of results is related to the work of Richardson:~\cite{RichardsonConjugacy,Richardsontuple}. It is an instance of a phenomenon coined as stability
by Mumford (see~\cite{Richardsontuple} for discussion of Mumford stability in this context; see also Corollary~\ref{corostabilityrichardson} below).
We give provide a method which relies on~\cite{Lemmanew}, thus emphasising the meaning of~\cite{Lemmanew} as stability properties.

\begin{proposition}\label{Propobddcrit} Let~$ G $ be a Zariski connected linear~$\Q_S$-group, and let~$ H $ be a Zariski connected~$\Q_S$ subgroup which is reductive in~$ G $,
following~\eqref{H reductif dans}. For any sequence~$\left( g _i\right)_{i\geq0}$ in~$ G $, the following are equivalent.
\begin{enumerate}
\begin{subequations}
\item \label{P1} If~$ H ^\prime$ denotes the centraliser of~$ H $ in~$ G $, then
\begin{equation}\label{eqp1}
\left( g _i\right)_{i\geq0}\text{ is of class }O(1) H ^\prime.
\end{equation}
\item \label{P2} If~$\lie{h}$ (resp.~$\lie{g}$, resp.~$\Ad$) denotes the Lie algebra of~$ H $ (resp. of~$ G $, resp the adjoint representation of~$ G $ on~$\lie{g}$), then for any element~$X$ in~$\lie{h}$, the sequence
\begin{equation}\label{eqp2}
\left(\Ad_{ g _i}(X)\right)_{i\geq0}
\end{equation}
is bounded in~$\lie{g}$.
\item \label{P3} (with the same notations) For any bounded sequence~$\left( X_i\right)_{i\geq0}$ in~$\lie{h}$, the sequence
\begin{equation}\label{eqp3}
\left(\Ad_{ g _i}(X_i)\right)_{i\geq0}
\end{equation}
is bounded in~$\lie{g}$.
\item \label{P4} There does not exist a sequence~$\left( X_i\right)_{i\geq0}$ in~$\lie{h}$, converging to~$0$, and such that
\begin{equation}\label{eqp4}
\left(\Ad_{ g _i}(X_i)\right)_{i\geq0}
\end{equation}
has a non zero limit point in~$\lie{g}$.
\end{subequations}
\end{enumerate}
\end{proposition}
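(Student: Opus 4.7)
\emph{Plan.} The implications among~(2),~(3),~(4) are soft and rely only on the finite-dimensionality of~$\lie{h}$. Pointwise boundedness~(2) of the linear operators $\Ad_{g_i}|_{\lie{h}}\colon \lie{h}\to\lie{g}$ is equivalent to uniform operator-norm boundedness, which is exactly~(3); the converse is immediate. The equivalence $(3)\Leftrightarrow(4)$ follows by the standard rescaling argument: under~(3), a sequence $X_i\to 0$ satisfies $\Ad_{g_i}(X_i)\to 0$; conversely, if~(3) fails, one selects unit vectors $X_i\in\lie{h}$ with $\|\Ad_{g_i}(X_i)\|\to\infty$ and sets $Y_i:=X_i/\|\Ad_{g_i}(X_i)\|$, so that $Y_i\to 0$ while $\Ad_{g_i}(Y_i)$ has norm~$1$ and, by compactness of the unit sphere, a non-zero limit point, contradicting~(4). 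These steps will occupy only a few lines.

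The implication $(1)\Rightarrow(3)$ is also immediate: if $g_i=b_i h'_i$ with $(b_i)\in O(1)$ and $h'_i\in H'$, then because $H'$ centralises~$H$ the element $\Ad_{h'_i}$ restricts to the identity on~$\lie{h}$, whence $\Ad_{g_i}(X_i)=\Ad_{b_i}(X_i)$, which is bounded in~$\lie{g}$ whenever $(b_i)$ and $(X_i)$ are bounded.

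The substantive part of the proof is $(3)\Rightarrow(1)$, and the plan here is to invoke Theorem~2 of~\cite{Lemmanew} --- the same geometric stability result used in~{\S}\ref{section-lin-focusing} to deduce $(g_i)\in O(1)\cdot F$ from boundedness of $g_i\cdot\Omega\cdot p_L$. I apply it to the $\Q_S$-rational representation $V:=\lie{h}^\ast\otimes\lie{g}$ (with $G$ acting trivially on~$\lie{h}^\ast$ and by~$\Ad$ on~$\lie{g}$), taking as distinguished vector the element $\iota\in V$ corresponding to the inclusion $\lie{h}\hookrightarrow\lie{g}$. A direct computation gives $\Stab_G(\iota)=H'$; and since elements of $H$ commute with those of~$H'$, we have $\omega H'\omega^{-1}=H'$ for every $\omega\in H$, so the pointwise fixator of $\Omega\cdot\iota$ in~$G$ is again~$H'$. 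Under hypothesis~(3), expressing $g_i\cdot\omega\cdot\iota$ in a basis $(e_k)$ of~$\lie{h}$ yields coordinates $\Ad_{g_i}(\Ad_\omega(e_k))$; each is the image under~$\Ad_{g_i}$ of an element of the bounded set $\Ad_\Omega(e_k)\subset\lie{h}$, and hence remains bounded in~$\lie{g}$. Therefore the orbit $\bigcup_{i\geq 0} g_i\cdot\Omega\cdot\iota$ is bounded in~$V$, and Theorem~2 of~\cite{Lemmanew} delivers $(g_i)\in O(1)\cdot H'$.

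The main obstacle is the correct choice of representation and the verification that $\Stab_G(\iota)$ and the fixator of $\Omega\cdot\iota$ both equal~$H'$, where the second identification uses in an essential way that $H$ and $H'$ commute. Once this setup is in place, Theorem~2 of~\cite{Lemmanew} does the heavy lifting, and no additional analytic input is required; a self-contained alternative would involve Richardson-type closedness of the $G$-orbit of a reductive subalgebra together with a Cartan decomposition argument, but this would merely duplicate what~\cite{Lemmanew} already provides in the ambient framework of the paper.
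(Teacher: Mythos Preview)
Your treatment of the equivalences $(2)\Leftrightarrow(3)\Leftrightarrow(4)$ and of $(1)\Rightarrow(3)$ is correct and matches the paper. For $(3)\Rightarrow(1)$ you have identified exactly the representation the paper uses --- $V=\Hom_{\Q_S}(\lie{h},\lie{g})$ with the inclusion~$\iota$ --- and your identification of $\Stab_G(\iota)$ and of the fixator of~$\Omega\cdot\iota$ with~$H'$ is the same computation the paper carries out.

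There is, however, a genuine gap. The proposition is stated for an arbitrary Zariski connected linear $\Q_S$-group~$G$, whereas the tools from~\cite{Lemmanew} (and the companion \cite[Theorem~2.1]{LemmaA} that produces a set~$Y$ with $G=Y\cdot H'$) are only available when~$G$ is semisimple. The paper therefore proves $(2)\Rightarrow(1)$ in three stages. First, for semisimple~$G$: essentially your argument, but with the preliminary step of writing $g_i=y_i z_i$ with $y_i\in Y$ and $z_i\in H'$, since \cite[Theorem~2]{Lemmanew} is stated for sequences lying in such a~$Y$. Second, for reductive~$G$: one factors out the centre $G=C\cdot D$ and reduces to the semisimple derived group~$D$. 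Third, for general~$G$: one takes a Levi decomposition $G=M\cdot U$ with $U$ the unipotent radical and $M\supseteq H$ a Levi factor, applies the reductive case to~$G/U$ to obtain $(g_i)\in O(1)\cdot U\cdot H'$, and then controls the unipotent piece~$u_i$ via the Kostant--Rosenlicht theorem (the $U$-orbit of~$\iota$ in~$V$ is closed, so boundedness of $u_i\cdot\iota$ forces $(u_i)$ bounded modulo $U\cap H'$). Your single invocation of~\cite{Lemmanew} covers only the first of these stages; without the reduction through the unipotent radical the argument is incomplete for the general~$G$ in the statement.
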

\begin{proof} 
The implication~$\eqref{P1}\Rightarrow\eqref{P2}$ is immediate. Assume~\eqref{P1}. As~$ H ^\prime$ fixes~$\lie{h}$, and in particular, fixes~$X$, we may replace~$\left( g _i\right)_{i\geq0}$ by a bounded sequence. Thus the sequence~\eqref{eqp2} wil be bounded.

The equivalences~$\eqref{P2}\Leftrightarrow\eqref{P3}\Leftrightarrow\eqref{P4}$ are merely reformulations. Here are some details.

Firstly,~$\eqref{P2}$ is a specialisation of~$\eqref{P3}$ to constant sequences~$(X_i)_{i\geq0}=(X)_{i\geq0}$.

Conversely,~$\eqref{P3}$ reduces to~$\eqref{P2}$, by decomposing~$X_i$ into a fixed base:~$X_i=\sum_j a_{i,j} e_j$, using finitely many bounded sequences~ $(a_{i,j})_{i\geq0}$. We then apply~$\eqref{P2}$ to each of the case~$X=e_j$, then deduce that each sequence~$\left(\Ad_{ g _i}(e_j)\right)_{i\geq0}$ is bounded. Consequently, each sequence~$\left(\Ad_{ g _i}(a_{i,j}e_j)\right)_{i\geq0}$ is bounded. Finally, taking a finite sum~$\left(\Ad_{ g _i}(\sum_j a_{i,j} e_j)\right)_{i\geq0}$ of bounded sequences yields a bounded sequence.

\paragraph{NB:}
 The equivalence~$\eqref{P2}\Leftrightarrow\eqref{P3}$ falls merely under an (easy) instance of Banach-Steinhaus uniform boundedness principle.
The former~$\eqref{P2}$ states pointwise boundedness, the latter~$\eqref{P3}$ states a sequential form of boundedness.

The negation of~$\eqref{P3}$ claim there is a sequence~$(X_i)_{i\geq0}$ such that the sequence~\eqref{eqp3} is unbounded. Choose a $\Q_S$-norm on~$\lie{g}$. We can find a sequence of scalars~$(\lambda_i)_{i\geq0}$ in~$\Q_S$ such that~$0<\limsup\Nm{\lambda_i\cdot\Ad_{ g _i}(X_i)}<\infty$ .
Consequently, the sequence
\[\left(\Ad_{ g _i}(\lambda_i\cdot X_i)\right)_{i\geq0}\] takes infinitely many value in the compact set
$$\left\{X\in\lie{g}~\middle|~1/2\leq\frac{\Nm{X}}{\limsup\Nm{\lambda_i\cdot\Ad_{ g _i}(X_i)}} \leq 2\right\}.$$
The sequence~$\left(\Ad_{ g _i}(\lambda_i\cdot X_i)\right)_{i\geq0}$ has a limit point in this compact. This limit point may not be~$0$. This negates~$\eqref{P4}$ for the sequence~$\left(\lambda_i\cdot X_i)\right)_{i\geq0}$.

Assume the negation of~$\eqref{P4}$. We can find a sequence of non invertible scalars such that~$(\lambda_i)_{i\geq0}$ with limit~$0$ and such that~$\Nm{X_i}=O(\Nm{\lambda_i})$. One can deduce the negation of~$\eqref{P3}$ for the sequence~$\left( X_i/\lambda_i)\right)_{i\geq0}$. 

The remaining implication~$\eqref{P1}\Leftarrow\eqref{P2}$ is proved below.
\end{proof} 
%The proof of the implication~$\eqref{P2}\Leftarrow\eqref{P1}$ will rely on the use of ...

We first start with a special case. (See also~{\cite[Theorem 2.16]{PR}}.)
\begin{proof}[Proof of the implication~$\eqref{P1}\Leftarrow\eqref{P2}$, for semisimple~$ G $. ]
~

Assume that~$ G $ is semisimple.

Applying \cite[Theorem~2.1]{LemmaA}, we get be a subset~$Y$ such that~$ G =Y\cdot  H ^\prime$. Consequently any class~${g_i}  H ^\prime$ can be written~$ y_i  H ^\prime$ with~$y_i$ in~$Y$. We can replace~$( g _i)_{i\geq0}$ by~$(y_i)_{i\geq0}$ without changing the validity of~\eqref{P1}.

As~$ H ^\prime$ acts via identity on~$\lie{h}$, we have
$$ \forall X \in\lie{h}, \Ad_{ g _i}(X)=\Ad_{{y}_i}(X).$$
Thus we can replace~$( g _i)_{i\geq0}$ by~$(y_i)_{i\geq0}$ without changing the validity of~\eqref{P2}.

We assume from now on the identity~$( g _i)_{i\geq0}=(y_i)_{i\geq0}$.

Consider the space
\[{V}:=\Hom_{\Q_S}\left(\lie{h},\lie{g}\right)\]
of~$\Q_S$-linear maps~$\lie{h}\to\lie{g}$, with adjoint action of~$ G $ on the values on a $\Q_S$-linear homomorphisms:~$( g \phi)(X):=\Ad_{ g }(\phi(X))$. 

Note that the stabiliser of the identity embedding~$\iota:\lie{h}\xrightarrow{X\mapsto X}\lie{g}$ is the centraliser of its image~$\lie{h}$. This centraliser is~$ H ^\prime$, as~$H$ is Zariski connected and characteristics are~$0$. For any~$\omega$ in~$ H $, the map~$\omega\cdot\iota$ has same image~$\lie{h}$, and same stabiliser~$ H ^\prime$.

Consider a open bounded subset~$\Omega$ in~$ H $. 
Remark that, for~$\omega\in\Omega$, the map~$g_i\cdot\omega\cdot \iota$ is the composite~$\left.\Ad_{g}\right|_\h \circ\Ad_\omega:\h\to\h\to\g$. Fix some norm on~$\h$ and on~$\g$. For the operator norm, 
$$\Nm{g_i\cdot\omega\cdot \iota}=\Nm{\left.\Ad_{g}\right|_\h \circ\Ad_\omega}\leq\Nm{\left.\Ad_{g_i}\right|_\h}\cdot\Nm{\Ad_\omega}= \Nm{g_i\cdot\iota}\cdot\Nm{\Ad_\omega}.$$
From hypothesis~\eqref{P2}, we know that~$\Ad_{y_i}(\iota)$ is bounded in~${V}$. But~$\Omega$ is also bounded.
Consequently,~$\Nm{g_i\cdot\Omega\cdot \iota}$ is uniformly bounded.
We may apply~\cite[Theorem~2]{Lemmanew}. Its conclusion uses the fixator of~$\Omega\cdot \iota$, which is~$ H ^\prime$, and is normalised by~$\Omega$. This conclusion is that~$(y_i)_{i\geq0}$ is of class~$O(1) H ^\prime$.
\end{proof}
We will need a slight extension of this special case.
\begin{proof}[Proof of the implication~$\eqref{P2}\Leftarrow\eqref{P1}$, assuming~$ G $ is reductive.] Assume that~$ G $ is reductive.

Let us write~$ G =C\cdot D$ as an almost direct product of its center~$C$ by its derived group~$D$. Decompose correspondingly~$ g _i=c_i\cdot d_i=d_i\cdot c_i$.

Note that~\eqref{eqp1} does not depend on~$c_i$ as~$C$ is contained in~$ H ^\prime$. Neither do the sequence~\eqref{eqp2}, has~$C$ is contained  in~$ H ^\prime$ which fixes~$\lie{h}$, to which~$X$ belongs.

We can replace~$ g _i$ by~$d_i$. The result can be deduced from the preceding case, applied to the semisimple group~$D$, its subgroup~$( H  C)\cap D$ and the sequence~$(d_i)_{i\geq0}$.
\end{proof}

We will now reduce the general case to the preceding special case.
\begin{proof}[Proof of the implication~$\eqref{P1}\Leftarrow\eqref{P2}$, in general]
Let~$U$ be the unipotent Radical of~$ G $. By~\cite[Theorem~7.1]{Mostow56}, we can decompose~$ G =M\cdot U$ as a semi product of a maximal reductive subgroup~$M$, which we assume to contain~$ H $, with~$U$.

%As~$ H $ is reductive, there is a Levi subgroup~$L$ of~$ G $ such that~$LT$ contains $ H $. (cf.~\cite[{\S} 6.8, Corrolaire~2]{BBKLie1}

Let~$\bar{G}=G/U$ be the quotient of~$ G $ by its unipotent radical.  We denote~$\bar{\lie{g}}$ the corresponding Lie algebra. Let us denote the image of the sequence~$( g _i)_{i\geq0}$ in~$\bar{G}$ with~$(\bar{g}_i)_{i\geq0}$.

Note that~$M$ , and in particular its subgroup~$ H $, identifies with its image in~$\bar{G}$. We will identify the Lie algebra of~$ H $ and of its image.

As above, we consider the space~${V}:=\Hom_{\Q_S}(\lie{h},\lie{g})$, with its element~$\iota$ and we consider moreover its variant~$\bar{V}:=\Hom_{\Q_S}(\lie{h},\bar{\lie{g}})$, both as representations of~$ G $. We denote with~$\bar{\iota}$ the ``identity'' map~${\lie{h}}\to\bar{\lie{g}}$. The representation~$\bar{V}$ is a quotient of the representation~${V}$, via a quotient map sending~$\iota$ to~$\bar{\iota}$.
 
By hypothesis~\eqref{P2}, the sequence~$({g_i}\cdot\bar\iota)_{i\geq0}$ is bounded in~${V}$. It follows that~$(\bar{g_i}\cdot\bar\iota)_{i\geq0}$ is bounded in~$\bar{V}$. We apply the previous case to~$\bar{G}$, its subgroup~$ H $, and the sequence~$(\bar{g_i}\cdot\bar\iota)_{i\geq0}$, and deduce that the sequence
\[
(\bar{g_i})_{i\geq0}\text{ is of class~}O(1)Z_{\bar{G}}(H),
\] 
where~$Z_{\bar{G}}(H)$ is the centraliser of~$H$ in~$\bar{G}$. 

By~\ref{} we deduce the sequence
\[
({g_i})_{i\geq0}\text{ is of class~}O(1)U\cdot H^\prime,
\] 
We can write~$g_i=b_i u_i z_i$ where~$(b_i)_{i\geq0}$  is a bounded sequence, the~$u_i$ belong to~$U$ and the~$z_i$ to~$H^\prime$.
We get
\[g_i\cdot \iota=b_i\cdot u_i\cdot z_i \cdot \iota= b_i\cdot u_i\cdot \iota.\]
Thus the boundedness of~$(g_i\cdot \iota)_{i\geq 0}$ in~$V$ reduces to
that of~$(u_i\cdot \iota)_{i\geq 0}$. But the orbit of unipotent group in a linear representation is closed, by the Kostant-Rosenlicht theorem \cite[2.4.14]{SpringerLAG}. The orbit map~$U/(U\cap H^\prime) \to U\cdot\iota$ is proper. Thus the sequence~$(u_i)_{i\geq 0}$ is actually bounded in~$U/(U\cap H^\prime)$. Finally we get more precisely
\[
({g_i})_{i\geq0}\text{ is of class~}O(1)(O(1)(U\cap H^\prime))\cdot H^\prime=O(1)H^\prime.
\]

%Note that~$ H ^\prime$ decomposes analogously~$( H ^\prime\cap M)\cdot ( H ^\prime\cap U)$. It can be seen at the Lie algebra level
%(the characteristic is~$0$). As the representations of~$ H $ are fully reducible, we can use~\ref{}.

%Let us decompose~$ g _i=  u_i m_i$ following a Levi decomposition~$ G =M\cdot U$, where the Levi factor~$M$ identifies with~$\bar{G}$, and is assumed to contain~$ H $.
%We deduce that~$(\bar{m_i}\cdot\iota)_{i\geq0}$ is of class~$O(1) Z_{M}( H )$ where~$Z_{M}( H )$ is the centraliser of~$ H $ in~$M$. We can write~$m_i=b_i z_i$ with bounded~$b_i$ and~$z_i$ in~$Z_{M}(H)$. Thus
%$$ g _i=  u_i m_i = u_i b_i z_i = b_i v_i z_i$$
%with~$v_i:= b_i^{-1}\cdot u_i \cdot b_i$. As the unipotent radical~$U$ is normal in~$ G $, the element~$v_i$ belongs to~$U$.
%
%By hypothesis the sequence~$(\hat{g_i}\cdot\iota)_{i\geq0}$ is bounded in~$\hat{V}$. We can rewrite
%$$ g _i \cdot\iota=  b_i v_i z_i \cdot\iota = b_i v_i \cdot\iota$$
%as~$z_i$ belongs to~$Z_{M}( H )$, which is contained in~$Z_{ G }( H )$, which is the stabiliser of~$\iota$. As~$b_i$ is bounded, we deduce that
%$$ v_i \cdot\iota$$
%is bounded.
%
%But the orbit of unipotent group in a linear representation is closed. Consequently the orbit map~$U/Z_U( H )\to V$ is a closed immersion, and is thus a proper map. 
%As a consequence,~$v_i \cdot\iota$ is bounded if and only if the sequence~$v_i$ is~$O(1)Z_U( H )$. 
%
%Finally, we can write~$ g _i$ as~$b_iO(1)Z_U(H)Z_L(H)=O(1)Z_{ G }$. 
This conclude the proof of the implication~$\eqref{P1}\Leftarrow\eqref{P2}$ in the general case, and conclude the proof of~Proposition~\ref{Propobddcrit}.

\end{proof}

Let us note the following corollary, which, in a way, summarise the part of information inside Proposition~\ref{Propobddcrit} which is the deepest.
\begin{corollary}\label{corostabilityrichardson} The orbit
$$ G \cdot \iota $$
is closed in~$\hat{V}$. Equivalently, the map
$$ \left. G \middle/{ H ^\prime}\right.\xrightarrow{g\mapsto g\cdot \iota}\hat{V}$$
is proper.
\end{corollary}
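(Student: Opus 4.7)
The plan is to derive both assertions directly from Proposition~\ref{Propobddcrit}. The bridge is the observation that, once a $\Q_S$-basis of $\h$ has been fixed, boundedness in $\hat V$ of a sequence $(g_i\cdot\iota)_{i\geq0}$ is exactly property~\eqref{P2} for the sequence $(g_i)_{i\geq0}$, since $(g_i\cdot\iota)(X)=\Ad_{g_i}(X)$ for every $X\in\h$.

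First I would establish properness of the orbit map $\pi\colon G/H^\prime\to\hat V$, $gH^\prime\mapsto g\cdot\iota$. Let $K\subseteq\hat V$ be compact and let $(g_iH^\prime)_{i\geq0}$ be a sequence in $\pi^{-1}(K)$. Since $(g_i\cdot\iota)_{i\geq0}\subseteq K$ is bounded, the implication $\eqref{P2}\Rightarrow\eqref{P1}$ of Proposition~\ref{Propobddcrit} furnishes a factorisation $g_i=b_iz_i$ with $(b_i)_{i\geq0}$ bounded in $G$ and $z_i\in H^\prime$. Then $g_iH^\prime=b_iH^\prime$, so every cluster point of the bounded sequence $(b_i)_{i\geq0}$ in $G$ produces a cluster point of $(g_iH^\prime)_{i\geq0}$ in $G/H^\prime$. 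Combined with the closedness of $\pi^{-1}(K)$ (preimage of a closed set by a continuous map) and the metrisability of $G/H^\prime$, this yields compactness of $\pi^{-1}(K)$.

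The closedness of $G\cdot\iota$ in $\hat V$ is then a formal consequence: a continuous proper map into a locally compact Hausdorff space is closed, so $G\cdot\iota=\pi(G/H^\prime)$ is closed in $\hat V$. This also justifies the ``equivalently'' in the statement: the converse implication follows, the continuous bijection $G/H^\prime\to G\cdot\iota$ being a homeomorphism as a consequence of the properness just obtained.

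All the substantive content sits inside the implication $\eqref{P2}\Rightarrow\eqref{P1}$ of Proposition~\ref{Propobddcrit}, which ultimately rests on the analytic stability result \cite[Theorem~2]{Lemmanew}; everything else is a formal unpacking of the orbit--stabiliser bijection for the element $\iota$. This is where I would expect the main obstacle to lie were one to reproduce the statement from scratch.
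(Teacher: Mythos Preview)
Your proposal is correct and matches the paper's approach: the corollary is stated immediately after Proposition~\ref{Propobddcrit} as a summary of its ``deepest part'', with no separate proof given beyond the remark that the underlying argument rests on~\cite{Lemmanew}. You have simply made explicit the formal passage from the sequential boundedness criterion $\eqref{P1}\Leftrightarrow\eqref{P2}$ to properness of the orbit map, which is exactly what the paper leaves implicit.
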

Our proof, relying on~\cite{Lemmanew}, actually uses convexity properties for symmetric spaces (and Bruhat-Tits buildings), similar to the
work of Kempf-Ness \cite{KempfNess}, with a generalised
Cartan decomposition of Mostow (and an analogue for buildings)

\subsubsection{Centralizer and cosets} We give some lemmas about centraliser and quotient groups. This is an adaptation of arguments which can be found
for instance in~\cite[{\S}5]{EMSAnn}.

\begin{lemma}Let~$G$ be a group of rational points of an algebraic group~$\underline{G}$ over a local field, and let~$H$ and~$L$ be two algebraic subgroups of~$G$.

 Write, after~\cite[{\S}5]{EMSAnn}
$$Z(H,L)=\left\{g\in G~\bigl|\forall h\in H,~[g,h]\in L\right\},$$
the ``\!\!\!$\pmod L$-centraliser'' of~$H$ in~$G$.

\begin{enumerate}
\item We remark that~$Z(H,L)$ is invariant by right translations under~$H^\prime$.
\item Assume~$L$ is normalised by~$H$. Then~$Z(H,L)$ is invariant by left translations under~$L$.
\item Under the latter assumption,~$Z(H,L)$ is moreover a finite union of double cosets~$LgH^\prime$. 
\end{enumerate}
\end{lemma}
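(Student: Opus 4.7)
The plan for parts (1) and (2) is a direct commutator computation. For (1), given $z \in H^\prime$ and $h \in H$ we have $zhz^{-1}=h$, so $[gz,h]=gzhz^{-1}g^{-1}h^{-1}=ghg^{-1}h^{-1}=[g,h]\in L$. For (2), given $\ell \in L$ and $h \in H$, inserting $g^{-1}g$ at the right place yields
\[ [\ell g, h] = \ell \cdot [g,h] \cdot (h \ell^{-1} h^{-1}), \]
and both factors lie in $L$, the second by the normalisation of $L$ by $H$.

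For part (3), the plan is to parametrise the double cosets via nonabelian $1$-cohomology. Since $L$ is normalised by $H$, conjugation gives an action of $H$ on $L$, and for each $g \in Z(H,L)$ the map $\phi_g : H \to L$ given by $\phi_g(h)=[g,h]$ is a $1$-cocycle:
\[ \phi_g(h_1 h_2) = \phi_g(h_1) \cdot h_1 \phi_g(h_2) h_1^{-1}, \]
as a short commutator expansion shows. Parts (1) and (2) translate into $\phi_{gz}=\phi_g$ for $z\in H^\prime$ and $\phi_{\ell g}(h)=\ell\cdot\phi_g(h)\cdot(h\ell^{-1}h^{-1})$ for $\ell\in L$; the latter is the twist of $\phi_g$ by $\ell$ under the standard $L$-action on cocycles. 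Conversely, if $\phi_{g_1}=\phi_{\ell g_2}$ for some $\ell\in L$, then $(\ell g_2)^{-1}g_1$ commutes with every element of $H$, hence lies in $H^\prime$. This will yield a well-defined injection of pointed sets
\[ L \backslash Z(H,L) / H^\prime \hookrightarrow H^1(H, L), \]
where the right hand side denotes the set of $L$-conjugacy classes of algebraic $1$-cocycles $H \to L$.

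The main obstacle is then the finiteness of the image. I plan to exploit the algebraic structure: the variety $Z^1(H,L)$ of $1$-cocycles is a closed algebraic subvariety of finite type of $\mathrm{Mor}(H,L)$ (cut out by the cocycle identity applied to a finite generating subset of the algebraic group $H$), and the twisted $L$-action is algebraic, so its orbits are locally closed. A tangent-space computation at $g\in Z(H,L)$ should show that, modulo $\lie{l}$, the restriction of $\Ad(g^{-1})$ to $\lie{h}$ is determined up to the centraliser of $\lie{h}$ in $\lie{g}$, so the orbit $L\cdot g\cdot H^\prime$ is open inside the irreducible component of $Z(H,L)$ through $g$. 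Noetherian induction on the proper closed complement then yields finitely many geometric $L\times H^\prime$-orbits on $Z(H,L)$. Finally, each geometric orbit contributes only finitely many $\Q_S$-rational orbits by the standard Galois-cohomological finiteness for algebraic groups over local fields of characteristic zero (Borel--Serre at archimedean places; Borel--Prasad and Bruhat--Tits at the ultrametric ones).
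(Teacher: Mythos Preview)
Your proofs of (1) and (2) are correct and essentially identical to the paper's (the paper phrases the computation via the equivalent condition~${}^g hL=hL$, but the content is the same).

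For (3), your cohomological framing---double cosets inject into the nonabelian~$H^1(H,L)$---is correct and conceptually pleasant, but note that you do not actually use it: when it comes to finiteness you return to a tangent-space argument on~$Z(H,L)$ itself, which is exactly the paper's route (showing the map~$(l,z)\mapsto lgz$ from~$\underline{L}\times\underline{H'}$ to~$\underline{G}$ is submersive at every point of~$\underline{Z}(H,L)$, so each irreducible component is a single geometric double coset). Both your sketch and the paper's leave the core differential computation to the reader or to~\cite[\S5]{EMSAnn}.

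Two remarks. First, your aside that~$Z^1(H,L)$ is a closed finite-type subvariety of~$\mathrm{Mor}(H,L)$, ``cut out by the cocycle identity applied to a finite generating subset of~$H$'', is not quite right as stated: the morphism scheme between positive-dimensional affine varieties is not of finite type, and an algebraic group has no finite generating set in any sense that would make this work. Fortunately you never use this claim; you argue directly on~$Z(H,L)\subset G$, which \emph{is} a closed subvariety of~$G$.

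Second, your final step---passing from finitely many geometric~$\underline{L}\times\underline{H'}$-orbits to finitely many rational double cosets via finiteness of Galois cohomology over local fields---is a genuine and necessary addition. The paper's proof stops at the geometric statement (each irreducible component of~$\underline{Z}(H,L)$ is a geometric double coset~$\underline{L}g\underline{H'}$) and defers the rest to~\cite[\S5]{EMSAnn}; over~$\R$ or~$\Q_p$ one indeed needs the finiteness of~$H^1$ of the stabiliser to conclude, as you note.
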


\begin{proof} We can write the condition~$[g,h^{-1}]\in L$ as~${}^ghL=hL$. The element~$h$ is ``centralised by~$g$ mod~$L$'' (i.e., if~$L$
is normal in~$G$, the cosets~$gL$ and~$hL$ commute in~$G/L$).

If~$z$ belongs to~$H^\prime$, namely if~${}^zh=h$ for any~$h$ in~$H$, then
$${}^{gz}hL={}^{g}({}^zh)L={}^{g}hL=hL$$
for any~$g$ in~$Z(H,L)$. Hence the first remark in the conclusion.

If~$l$ belongs to~$L$, then, for any~$g$ in~$Z(H,L)$, and~$h$ in~$H$
$${}^{lg}hL={}^{l}({}^gh)L=l{}^{g}hl^{-1}L=l{}^{g}hL=lhL=h\cdot l^hL.$$
Hence~$l\cdot g$ belongs to~$Z(H,L)$ as soon as each of the~$l^h$ belong to~$L$. Hence the second remark in the conclusion.

We now prove the third point. Write~$\underline{L}$ the algebraic group associated with~$L$. We first note that condition~$\forall h\in H,~[g,h]\in \underline{L}$ defines an algebraic subvariety~$\underline{Z}(H,L)$ of~$G$, whose rational points are given by~$Z(H,L)$. As such, it is made of finitely many irreducible components. Consider an irreducible component~$\underline{V}$ of~$\underline{Z}(H,L)$. Write~$\underline{H^\prime},\underline{G}$ for the algebraic varieties associated with~$H^\prime$ and~$G$. We will show that for any point~$g$ in~$\underline{V}$, the map
$$\underline{L}\times \underline{H^\prime}\to \underline{G}:(l,z)\mapsto lgz$$
is submersive above~$g$. It will then show that it is submersive everywhere, and that~$\underline{V}$ is actually a double coset~$\underline{L}g\underline{H^\prime}$. We follow an argumentation from~\cite[{\S}5]{EMSAnn}.

\end{proof}

\paragraph{}
\begin{proposition} Let~$G$ be an algebraic group over a characteristic zero field, and~$N$ a normal
algebraic subgroup. Let~$\lie{h}$ be a reductive Lie subalgebra in that~$\lie{g}$ of~$G$, and 
denote~$\bar{\lie{h}}$ its image in that of~$G/N$. 

Then the induced map
\[\lie{h}^\prime\to\bar{\lie{h}}^\prime\]
from the centraliser of~$\lie{h}$ in~$G$ to the centraliser of~$\bar{\lie{h}}^\prime$ in~$G/N$
is submersive (equivalently, open).  
\end{proposition}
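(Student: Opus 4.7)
The plan is to reduce the statement to surjectivity of the differential at the identity, and there apply complete reducibility of the adjoint action of~$\lie{h}$ on~$\lie{g}$.

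Because the map $\lie{h}^\prime \to \bar{\lie{h}}^\prime$ is a homomorphism of algebraic groups, submersivity at every point is equivalent to submersivity at the identity, which in turn is equivalent to surjectivity of the induced map on Lie algebras. So the whole problem reduces to showing that the natural map $\mathrm{Lie}(\lie{h}^\prime) \to \mathrm{Lie}(\bar{\lie{h}}^\prime)$ is surjective. Since we are in characteristic zero, both centralizer group-schemes are smooth, and their Lie algebras are simply the $\lie{h}$-fixed subspaces for the adjoint action:
\[
\mathrm{Lie}(\lie{h}^\prime) = \lie{g}^{\lie{h}} := \{X\in\lie{g}\mid [X,\lie{h}]=0\},
\qquad
\mathrm{Lie}(\bar{\lie{h}}^\prime) = \bar{\lie{g}}^{\lie{h}},
\]
and the map between them is the one obtained by taking $\lie{h}$-invariants of the quotient $\lie{g} \twoheadrightarrow \bar{\lie{g}}$.

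Next, since $N$ is normal in $G$, its Lie algebra $\lie{n}$ is an $\Ad(G)$-stable ideal of $\lie{g}$, and in particular $\lie{h}$-stable under the adjoint action. We therefore have a short exact sequence of finite-dimensional $\lie{h}$-modules
\[
0 \longrightarrow \lie{n} \longrightarrow \lie{g} \longrightarrow \bar{\lie{g}} \longrightarrow 0.
\]
By hypothesis, $\lie{h}$ is reductive in $\lie{g}$, which means precisely that the adjoint action of $\lie{h}$ on $\lie{g}$ is semisimple; combined with characteristic zero, this guarantees complete reducibility for every $\lie{h}$-stable subrepresentation and quotient. Hence the sequence above splits as $\lie{h}$-modules, $\lie{g} = \lie{n} \oplus \lie{m}$ with $\lie{m}$ an $\lie{h}$-stable complement mapping isomorphically onto $\bar{\lie{g}}$. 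Taking $\lie{h}$-invariants then gives $\lie{g}^{\lie{h}} = \lie{n}^{\lie{h}} \oplus \lie{m}^{\lie{h}}$, with $\lie{m}^{\lie{h}}$ mapping isomorphically onto $\bar{\lie{g}}^{\lie{h}}$. In particular $\lie{g}^{\lie{h}} \twoheadrightarrow \bar{\lie{g}}^{\lie{h}}$ is surjective, which is what was needed.

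The only substantive step is the complete reducibility argument; the main point requiring care is to remember that "reductive in $\lie{g}$" (not merely abstract reductivity of~$\lie{h}$) is exactly the hypothesis making the functor of $\lie{h}$-invariants exact on finite-dimensional $\lie{h}$-modules in characteristic zero. Once this is granted, passing from Lie algebra surjectivity back to submersivity of the algebraic group homomorphism, and thus to openness, is automatic.
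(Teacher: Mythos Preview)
Your argument is correct and follows essentially the same route as the paper: reduce to surjectivity at the Lie algebra level, then use that~$\lie{h}$ being reductive in~$\lie{g}$ gives an~$\lie{h}$-stable complement~$\lie{m}$ to the ideal~$\lie{n}$, so that the centraliser~$\lie{g}^{\lie{h}}$ decomposes as~$\lie{n}^{\lie{h}}\oplus\lie{m}^{\lie{h}}$ and surjects onto~$\bar{\lie{g}}^{\lie{h}}$. The paper packages this via a separate lemma on direct-sum decompositions and its corollary on short exact sequences of centralisers, but the content is the same.
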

\paragraph{Centralisers at the Lie algebra level}
\begin{lemma}
Let
\begin{equation}\label{decompoLie}
\lie{g}=\lie{m}+\lie{l}
\end{equation}
be decomposition of a Lie algebra~$\lie{g}$ over some field
into a direct sum by two linear subspaces.

Let~$E$ be a subset of~$\lie{g}$ whose adjoint action preserve 
the decomposition~\eqref{decompoLie}:
\[\forall~e\in E,~[e,\lie{m}]\subseteq\lie{m}\text{ and }~[e,\lie{l}]\subseteq\lie{l}.\]
Then its centraliser~$E^\prime$
in~$\lie{g}$ decomposes accordingly into a direct sum
\begin{equation}\label{decompoLie}
E^\prime=E^\prime\cap\lie{m}+E^\prime\cap\lie{l}.
\end{equation}
\end{lemma}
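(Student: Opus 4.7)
The plan is to prove the nontrivial inclusion $E^\prime \subseteq (E^\prime\cap\lie{m}) + (E^\prime\cap\lie{l})$ directly, by showing that the two components (in $\lie{m}$ and $\lie{l}$) of any $x\in E^\prime$ are themselves each centralised by every element of $E$. The reverse inclusion is immediate from the definitions.

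Concretely, I would take any $x\in E^\prime$ and use the hypothesis $\lie{g}=\lie{m}\oplus\lie{l}$ to write $x=m+l$ uniquely with $m\in\lie{m}$ and $l\in\lie{l}$. For any $e\in E$, bilinearity of the bracket gives $[e,x]=[e,m]+[e,l]$. Since $x$ centralises $E$, the left-hand side is zero; and by the invariance hypothesis, $[e,m]\in\lie{m}$ while $[e,l]\in\lie{l}$. Directness of the sum $\lie{m}\oplus\lie{l}$ then forces both $[e,m]=0$ and $[e,l]=0$. As $e\in E$ was arbitrary, both $m$ and $l$ lie in $E^\prime$, so $m\in E^\prime\cap\lie{m}$ and $l\in E^\prime\cap\lie{l}$, and $x$ decomposes as required.

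There is no real obstacle here: the argument is entirely formal and only uses (i) the directness of the sum $\lie{g}=\lie{m}\oplus\lie{l}$ to separate the two components of $[e,x]$, and (ii) the $E$-invariance of each summand. No assumption on the characteristic, on finite-dimensionality, or on $E$ itself (it need not even be a Lie subalgebra) is needed; $E$ is just any subset satisfying the invariance condition, and the conclusion concerns its set-theoretic centraliser in $\lie{g}$. The only thing worth making explicit in the writeup is that the sum on the right-hand side is again direct, which follows a fortiori from the directness of $\lie{m}+\lie{l}$.
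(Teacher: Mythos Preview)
Your proof is correct and essentially identical to the paper's own argument: both prove the nontrivial inclusion by decomposing an element of~$E'$ as $m+l$, using the invariance hypothesis to place $[e,m]\in\lie{m}$ and $[e,l]\in\lie{l}$, and then invoking directness of the sum to conclude each bracket vanishes separately. The only cosmetic difference is that the paper writes the brackets as $[X,e]$ rather than $[e,x]$.
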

\begin{proof}We prove the identity by double inclusion.
The equations defining~$E^\prime$ are
\[[X,e]=0,\]
 for~$e$ in~$E$. These are linear equations in~$X$, and hence~$E^\prime$ is a linear subspace, and consequently we have the 
 trivial inclusion
\[
E^\prime\supseteq E^\prime\cap\lie{m}+E^\prime\cap\lie{l}.
\] 
 Decomposing~$X=m+l$ with~$m\in\lie{m}$ and~$l\in\lie{l}$, we get~$[m,e]\in\lie{m}$ and~$[l,e]\in\lie{l}$
 by hypothesis on~$E$. As~$\lie{m}$ and~$\lie{l}$ are in direct sum, the equation~$[X,e]=[l+m,e]=[m,e]+[l,e]=0$ reduces to the conjunction of~$[m,e]=0$ and~$[l,e]=0$. In other words~$X\in E^\prime\cap\lie{m}+E^\prime\cap\lie{l}$. This proves the non trivial inclusion.
\end{proof}
\begin{corollary} Assume moreover~$\lie{l}$ is an ideal of~$\lie{g}$. We have a sort exact subsequence
\[0\to E^\prime\cap \lie{l} \to E^\prime \to (E\pmod{\lie{l}})^\prime\to 0\]
from
\[0\to\lie{l}\to\lie{g}\to\lie{g}/\lie{l}\to0\]
where~$(E\pmod{\lie{l}})^\prime$ is the centraliser in~$\lie{g}/\lie{l}$ of the image of~$E$.
\end{corollary}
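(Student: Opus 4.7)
The plan is to verify exactness at each of the three spots of the short sequence, with the main content lying at the rightmost position (surjectivity). I will treat the previous Lemma, which furnishes a direct sum decomposition $E^\prime = (E^\prime\cap\lie{m})\oplus(E^\prime\cap\lie{l})$ under the standing hypothesis that $\lie{g}=\lie{m}+\lie{l}$ is preserved by $\ad(E)$, as given.

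First, define the map $E^\prime \to (E\pmod{\lie{l}})^\prime$ as the restriction of the quotient $\lie{g}\to\lie{g}/\lie{l}$. This is well-defined: if $X\in E^\prime$, then for every $e\in E$ the element $[X,e]$ vanishes, so its image in $\lie{g}/\lie{l}$ vanishes, meaning $\bar{X}$ centralises the image $\bar{E}$ of $E$. Exactness on the left and at the middle is immediate: the kernel of the quotient $\lie{g}\to\lie{g}/\lie{l}$ is $\lie{l}$, hence the kernel of the restriction is $E^\prime\cap\lie{l}$.

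For surjectivity, let $\bar{X}\in(E\pmod{\lie{l}})^\prime$. Using the vector space decomposition $\lie{g}=\lie{m}+\lie{l}$, lift $\bar{X}$ to a unique element $X\in\lie{m}$. It remains to show $X\in E^\prime$. By definition of $\bar X$, for every $e\in E$ one has
\[
[X,e]\in\lie{l}.
\]
On the other hand, the hypothesis that $\ad(E)$ preserves the decomposition gives $[e,\lie{m}]\subseteq\lie{m}$, so $[X,e]=-[e,X]\in\lie{m}$. Since $\lie{m}+\lie{l}$ is a direct sum, we conclude $[X,e]\in\lie{m}\cap\lie{l}=\{0\}$, so $X\in E^\prime$ and its image in $\lie{g}/\lie{l}$ is $\bar{X}$. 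This establishes surjectivity.

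The only genuine ingredient is the direct sum decomposition guaranteed by the previous Lemma, which forces the lift $X\in\lie{m}$ to satisfy the bracket equation by a pure intersection argument; there is no serious obstacle once the lemma is available. One might note in passing that the result can equivalently be read as saying the centraliser functor is exact on short exact sequences of Lie algebras equipped with an $E$-stable splitting, which is how it will be used in the accompanying Proposition on centralisers of reductive subalgebras modulo normal subgroups.
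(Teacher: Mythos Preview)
Your proof is correct and matches the paper's approach: the paper's own proof consists of the single sentence ``It is sufficient to check that~$(E\pmod{\lie{l}})^\prime$ is the image of~$E^\prime$ in~$\lie{g}/\lie{l}$,'' leaving the verification to the reader, and your surjectivity argument via the unique lift to~$\lie{m}$ and the intersection~$\lie{m}\cap\lie{l}=\{0\}$ is exactly the intended filling. One small remark: what you actually use in the surjectivity step is the \emph{hypothesis} of the preceding Lemma (that~$\ad(E)$ preserves~$\lie{m}$), not its conclusion~$E^\prime=(E^\prime\cap\lie{m})\oplus(E^\prime\cap\lie{l})$; your final paragraph slightly overstates the Lemma's role, though the argument itself is clean and self-contained.
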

\begin{proof} It is sufficient to check that~$(E\pmod{\lie{l}})^\prime$ is the image of~$E^\prime$ in~$\lie{g}/\lie{l}$.

\end{proof}
\subparagraph{Special cases}
\begin{itemize}
\item
Let~$\lie{h}$ be a subalgebra in a Levi factor~$\lie{m}$ of~$\lie{g}$ and take for~$\lie{l}$ the radical of~$\lie{g}$. Then the Levi decomposition
\[\lie{g}=\lie{m}+\lie{l}\]
is stable under~$\lie{h}$ (the radical is stable under~$\lie{g}$ and~$\lie{m}$ is stable under itself). We get
\[
\lie{h}^\prime= \lie{h}^\prime\cap\lie{m}+\lie{h}^\prime\cap\lie{l}.
\] 
\item More generally,~$\lie{l}$ can be any ideal of~$\lie{g}$ for which there is a supplementary Lie algebra~$\lie{m}$. This occurs for the Lie
algebra of a semidirect product Lie group~$G=M\times L$ and a Lie subgroup~$H\leq M$.
\item If~$\lie{h}$ is a Lie subalgebra which is \emph{reductive in~$\lie{g}$} (in characteristic~$0$, following~\cite[I{\S}{\S}6,8]{BBKLie1}) or more generally
which acts fully reducibly in~$\lie{g}$, then any~$\lie{h}$-invariant
subspace~$\lie{l}$ of~$\lie{g}$ admits a supplementary~$\lie{h}$-invariant subspace~$\lie{m}$. An example for~$\lie{l}$ is the radical of~$\lie{g}$, and, in characteristic~$0$, any Levi factor containing~$\lie{h}$ is an example for~$\lie{m}$.
\end{itemize}

\begin{lemma} Let~$\lie{g}$ be a Lie algebra over some field, denote~$\lie{u}$ its unipotent radical,
and let~$\lie{h}$ be a sub-algebra of~$\lie{g}$ whose adjoint action on~$\lie{g}$ is fully reducible.
(\cite{} says~$\lie{h}$ is “reductive in~$\lie{g}$” in a characteristic~$0$ setting.)

If~$\lie{m}$ be a Levi factor containing~$\lie{h}$
\end{lemma}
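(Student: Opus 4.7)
The displayed statement is truncated mid-sentence; I interpret the intended conclusion from the surrounding context (in particular the two Special Cases listed just above, and the Proposition on surjectivity $\lie{h}^\prime\to\bar{\lie{h}}^\prime$) as follows: if $\lie{m}$ is a Levi factor of $\lie{g}$ containing $\lie{h}$ and $\lie{u}$ is the unipotent (or, more generally, solvable) radical, then the centraliser decomposes as
\[
\lie{h}^\prime \;=\; (\lie{h}^\prime\cap\lie{m})\;\oplus\;(\lie{h}^\prime\cap\lie{u}),
\]
and the natural projection $\lie{h}^\prime\to\bar{\lie{h}}^\prime$ onto the centraliser of the image $\bar{\lie{h}}$ in $\lie{g}/\lie{u}$ is surjective, with kernel $\lie{h}^\prime\cap\lie{u}$ and with $\lie{h}^\prime\cap\lie{m}$ providing a canonical splitting.

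The plan is to reduce everything to the preceding ``decompoLie'' Lemma and its Corollary. First I would verify that the linear direct sum $\lie{g}=\lie{m}+\lie{u}$ is stable under the adjoint action of $\lie{h}$: the inclusion $[\lie{h},\lie{u}]\subseteq\lie{u}$ is automatic since $\lie{u}$ is an ideal of $\lie{g}$, while $[\lie{h},\lie{m}]\subseteq\lie{m}$ follows from $\lie{h}\subseteq\lie{m}$ and the fact that $\lie{m}$ is itself a subalgebra. So the hypothesis of the ``decompoLie'' Lemma is met with $E=\lie{h}$, $\lie{l}=\lie{u}$, yielding directly the asserted direct sum decomposition of $\lie{h}^\prime$.

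Next I would invoke the Corollary following that lemma: since $\lie{u}$ is an ideal of $\lie{g}$, one obtains the short exact sequence
\[
0\longrightarrow \lie{h}^\prime\cap\lie{u}\longrightarrow \lie{h}^\prime\longrightarrow \bar{\lie{h}}^\prime\longrightarrow 0,
\]
which proves surjectivity onto $\bar{\lie{h}}^\prime$ and identifies the kernel. Combining this with the decomposition from the previous step shows that $\lie{h}^\prime\cap\lie{m}$ maps isomorphically onto $\bar{\lie{h}}^\prime$, so it serves as the desired canonical splitting; this in turn is exactly what the subsequent Proposition on submersivity of $\lie{h}^\prime\to\bar{\lie{h}}^\prime$ will exponentiate to the group level.

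The main subtlety (rather than a true obstacle) is justifying that $\lie{h}$ really does act fully reducibly on $\lie{g}$ in the generality asserted and that a Levi factor containing $\lie{h}$ exists: in characteristic zero this is Malcev's theorem together with the standard fact that any reductive subalgebra of $\lie{g}$ lies in some Levi factor (any two of which are conjugate under $\exp(\mathrm{ad}\,\lie{u})$), so one may in fact choose $\lie{m}$ to match $\lie{h}$. Once this is in place, the proof is a bookkeeping combination of the two earlier Lie-algebraic statements, with no further computation required.
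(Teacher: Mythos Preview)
Your interpretation of the truncated statement is the natural one, and your argument is essentially the same as what the paper sketches: the paper's own justification is already contained in the ``Special cases'' bullets immediately preceding this lemma (the first bullet gives the decomposition $\lie{h}^\prime=(\lie{h}^\prime\cap\lie{m})+(\lie{h}^\prime\cap\lie{l})$ from stability of the Levi decomposition under~$\lie{h}$, the third bullet addresses existence of a Levi factor containing~$\lie{h}$), and the Corollary you cite is exactly the short exact sequence the paper records. The paper gives no separate proof of the truncated lemma beyond those remarks, so there is nothing further to compare.
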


\begin{proposition}[Lifting centraliser of a reductive subgroup over a local field] Let~$G$ be a group of rational points of an algebraic group~$\underline{G}$ over a local field of characteristic~$0$,~$H$ a Zariski connected reductive subgroup in~$G$ and~$L$ a normal subgroup of~$G$.

Let~$H^\prime$ (resp.~$Z$) be the centraliser of~$H$ in~$G$ (resp. of~$HL/L$ in~$G/L$),
and~${H^\prime}^0$ (resp.~$Z^0$) its Zariski neutral component.

Then~$Z^0$ is the image of~${H^\prime}^0$ in~$G/L$.
\end{proposition}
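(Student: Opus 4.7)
The strategy is threefold: (i) reduce the claim to a Lie algebra assertion about differentials of the quotient map, (ii) derive that Lie algebra assertion from the preceding ``Centralisers at the Lie algebra level'' corollary using that $H$ is reductive, and (iii) transfer the Lie algebra statement back to neutral components via smoothness at the identity.

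First I would observe that the quotient morphism $\pi:\underline{G}\to\underline{G}/\underline{L}$ restricts to a morphism of algebraic $k$-groups $\underline{H}'\to\underline{Z}$: an element $g$ centralising $\underline{H}$ has image $\pi(g)$ centralising $\pi(\underline{H})=\underline{H}\underline{L}/\underline{L}$, hence lies in $\underline{Z}$. Differentiating at the identity yields a $k$-linear map
\[ d\pi:\mathfrak{h}'\longrightarrow\mathfrak{z}, \]
where $\mathfrak{h}'$ is the centraliser of $\mathfrak{h}$ in $\mathfrak{g}$ and $\mathfrak{z}$ is the centraliser of the image of $\mathfrak{h}$ in $\mathfrak{g}/\mathfrak{l}$. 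Normality of $L$ in $G$ ensures $\mathfrak{l}$ is an ideal of $\mathfrak{g}$ and, in particular, stable under the adjoint action of $\mathfrak{h}$.

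The heart of the argument is the surjectivity of $d\pi$. Because $H$ is reductive and the characteristic is zero, the adjoint action of $\mathfrak{h}$ on $\mathfrak{g}$ is fully reducible, so the preceding ``Centralisers at the Lie algebra level'' corollary applies with $E=\mathfrak{h}$ and produces a short exact sequence
\[ 0\longrightarrow \mathfrak{h}'\cap\mathfrak{l}\longrightarrow \mathfrak{h}'\longrightarrow (\mathfrak{h}\bmod\mathfrak{l})'\longrightarrow 0, \]
whose rightmost term is, by definition, $\mathfrak{z}$. This is precisely the Lie-algebraic form of the proposition.

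Finally, to conclude at the group level: since $d\pi$ is surjective at the identity, the morphism of algebraic $k$-groups $\underline{H}'^0\to\underline{Z}^0$ is smooth at $e$, so its scheme-theoretic image is a Zariski closed, Zariski connected subgroup of $\underline{Z}$ with Lie algebra $\mathfrak{z}=\mathrm{Lie}(\underline{Z}^0)$, forcing it to coincide with $\underline{Z}^0$. On $k$-points, the $k$-analytic implicit function theorem gives that $\pi({H}'^0)$ contains an analytic neighbourhood of the identity in $Z^0$; together with the inclusion $\pi({H}'^0)\subseteq Z^0$ enforced by Zariski connectedness, this yields the desired equality $Z^0=\pi({H}'^0)$. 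The main obstacle is cleanly extracting the Lie algebra surjectivity, but once the reductivity of $H$ is used to supply the $\mathfrak{h}$-invariant complement and the previous corollary is invoked, the group-level conclusion is a standard application of smoothness of algebraic morphisms in characteristic zero together with the local structure of $k$-analytic groups.
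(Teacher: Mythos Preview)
The paper states this proposition without a proof; the surrounding material (the ``Centralisers at the Lie algebra level'' lemma and its corollary, and the preceding proposition asserting that the map of centralisers is submersive, equivalently open) is clearly meant as preparation, and your steps (i)--(ii) use it exactly as the organisation of the paper suggests. Your Lie algebra argument is correct: full reducibility of the adjoint $\mathfrak{h}$-action supplies an $\mathfrak{h}$-invariant complement to $\mathfrak{l}$, the corollary then gives the short exact sequence $0\to\mathfrak{h}'\cap\mathfrak{l}\to\mathfrak{h}'\to(\mathfrak{h}\bmod\mathfrak{l})'\to 0$, and from this you correctly obtain the scheme-theoretic equality $\pi(\underline{H}'^0)=\underline{Z}^0$.

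The gap is in your final step. You claim that since $\pi(H'^0)$ contains an analytic neighbourhood of the identity in $Z^0$ and is contained in $Z^0$, it must equal $Z^0$. Over a non-archimedean local field this inference fails: an open subgroup need not exhaust the ambient group. Already the squaring map $\mathbb{G}_m\to\mathbb{G}_m$ over $\mathbb{Q}_p$ is a surjection of connected algebraic groups whose image on rational points, $(\mathbb{Q}_p^\times)^2$, is open of finite index but proper. Even over $\mathbb{R}$ the Zariski neutral component can have several real components (e.g.\ $\mathbb{G}_m(\mathbb{R})=\mathbb{R}^\times$), so ``open and contained'' does not force equality there either. What you have actually proved is the content of the \emph{preceding} proposition in the paper (submersiveness, equivalently openness, of the map of centralisers); the passage from openness to surjectivity onto $Z^0$ on rational points needs an additional argument---for instance a Galois-cohomological vanishing for the kernel $\underline{H}'^0\cap\underline{L}$, or an explicit unpacking of what the paper means by $Z^0$ inside the abstract quotient $G/L$ rather than inside $(\underline{G}/\underline{L})(k)$---and neither your proposal nor the paper supplies it.
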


\begin{proposition}Let~$G$ be a group of rational points of an algebraic group~$\underline{G}$ over a local field of characteristic~$0$, 
with normal algebraic subgroup~$N$, and an algebraic almost semidirect product decomposition
\[G=M\cdot N.\]
Let~$H$ be Zariski connected reductive subgroup in~$G$ normalising~$M$. Then the almost semidirect decomposition descend to its centraliser
\[H^\prime=(H^\prime \cap M)\cdot (H^\prime\cap N).\]
\end{proposition}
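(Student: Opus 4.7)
The plan is to show that every $g \in H^\prime$ admits a decomposition $g = mn$ with $m \in H^\prime \cap M$ and $n \in H^\prime \cap N$. By the almost-semidirect hypothesis $G = M \cdot N$, the intersection $\underline{M} \cap \underline{N}$ is a zero-dimensional (finite) subgroup scheme, so such a decomposition exists with $m \in M$, $n \in N$ and is unique up to this finite ambiguity. The task reduces to showing that both factors can be chosen to centralise $H$.

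The main step uses Zariski-connectedness of $\underline{H}$. Fix $g = mn$ and let $h \in H$. The $H$-invariance of both $M$ (by hypothesis) and $N$ (normal in $G$), together with $hgh^{-1} = g$, rewrites as
\[ m^{-1}(hmh^{-1}) \;=\; n\cdot(hnh^{-1})^{-1}. \]
The left side lies in $M$ and the right side in $N$, so both lie in $M \cap N$. This defines a morphism of algebraic varieties $\psi_g \colon \underline{H} \to \underline{M} \cap \underline{N}$, $h \mapsto m^{-1}(hmh^{-1})$. Since $\underline{H}$ is Zariski connected and $\underline{M} \cap \underline{N}$ is zero-dimensional, $\psi_g$ is constant equal to $\psi_g(e) = e$; hence $hmh^{-1} = m$ and $hnh^{-1} = n$ for every $h \in H$, giving $m \in H^\prime \cap M$ and $n \in H^\prime \cap N$ as desired.

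The main obstacle I anticipate is passing from this scheme-theoretic statement to the claimed identity at the level of rational points. The argument above yields $\underline{H}^\prime = (\underline{H}^\prime \cap \underline{M}) \cdot (\underline{H}^\prime \cap \underline{N})$ as algebraic groups, and descending to $k$-points in general demands a Galois-cohomological input. In the context suggested by Mostow's theorem invoked earlier in the excerpt (where $N$ is the unipotent radical and $M$ a Levi factor), $N$ is unipotent, so $H^1(k,N) = 0$ and the decomposition transfers to $k$-rational points immediately. More intrinsically, the Lie-algebra corollary established just before this proposition gives $\lie{h}^\prime = (\lie{h}^\prime \cap \lie{m}) \oplus (\lie{h}^\prime \cap \lie{n})$ — using the full reducibility of the adjoint action that comes from $H$ being reductive in $G$ — and this, combined with the $k$-analytic exponential of the local field, yields the decomposition in a neighbourhood of the identity of $H^\prime$; Zariski-connectedness of $\underline{H}$ then propagates it globally, closing the gap.
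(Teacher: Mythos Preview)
The paper states this proposition without proof; it follows the Lie-algebra lemma and corollary (which \emph{are} proved) and then the section ends. So there is no paper argument to compare against, and I assess your proof on its own.

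Your core argument is correct: for $g = mn \in H'$ with $m\in M$, $n\in N$, the morphism $\psi_g : \underline{H} \to \underline{G}$, $h \mapsto m^{-1}(hmh^{-1})$, lands in $\underline{M}$ (because $\underline{H}$ normalises $\underline{M}$, by Zariski density of $H(k)$ in the connected $\underline{H}$) and equally in $\underline{N}$ (via the displayed identity, which holds on all of $\underline{H}$ since the centraliser of $g$ is Zariski closed and contains the dense $H(k)$). Connectedness of $\underline{H}$ and finiteness of $\underline{M}\cap\underline{N}$ force $\psi_g\equiv\psi_g(e)=e$, whence $m,n\in H'$.

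Your second paragraph, however, is chasing a phantom. The hypothesis $G = M\cdot N$ is stated at the level of $k$-points, so the factorisation $g = mn$ with rational $m,n$ is handed to you; your connectedness argument then shows \emph{these very} $m,n$ lie in $H'$. There is no descent to perform, no call for $H^1(k,N)$ or the exponential map. The only excursion beyond $k$-points is invoking connectedness of the scheme $\underline{H}$ to make $\psi_g$ constant, and the conclusion is read off immediately at rational points. You may simply delete the second paragraph; the first already proves the proposition.
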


%\newpage
\appendix

% SECTION THOMAS
%\section{Review on Ratner's theorems and some variants}%\label{AppRatner}

\section{Linearisation. A Review in the $S$-arithmetic setting}\label{AppA}

The linearisation method of Dani and Margulis is the second essential ingredient to our proof. Specifically, our proof relies on an $S$-arithmetic version of Proposition~3.13 of \cite{EMSAnn} (refer to equation~\ref{eq0}). In this appendix, we prove this analogue, namely Proposition~\ref{AProp313}.

The proof we present follows the original proof of \cite{EMSAnn}, but makes use of simplifications introduced in \cite{KT} using the Besicovich covering property and $(C,\alpha)$-good functions. With these simplifications, the multidimensional case is treated equally to the one-dimensional case.

\subsection{Double fibration of $G/\Gamma_N$}
Recall that we have a double fibration of $G/\Gamma_N$ arising from the natural projection $\phi: G/\Gamma_N\to G/\Gamma$, and from the map $\eta_L:G\to V$ given by $g\mapsto g\cdot p_L$ (see~\ref{subsection linearisation}), which factors through a map $\bar{\eta}_L$ from $G/\Gamma_N$. 

Note that in our setting, the image of $\Gamma$ in $GL(V)(\Q_S)$ is commensurable with its intersection with $GL(V)(\Z[S^{-1}])$, and that $p_L$ is $\Q_S$-proportional to a rational vector. It follows that $\Gamma\cdot p_L$ is discrete. 

There are two important consequences for us. Firstly, $\phi\times\bar{\eta}_L:G/\Gamma_N\to G/\Gamma\times V$ is proper. Secondly, for any compact set $E$ of $V$, the function $\chi_E:G/\Gamma\to\Z_{\geq 0}$ given by $g\Gamma\mapsto\#\phi^{-1}(g\Gamma)\cap\bar{\eta}_L^{-1}(E)$ is upper semi-continuous (see~\eqref{semicontinu} and the argumentation there).

\subsection{Linearisation of singular sets} Following the terminology of~\cite[\S 3]{DM}, the subset of $(L,\Gamma_N)$-self-intersection of a set $X$ in $G$ is the set where $\phi$ restricted to $X\Gamma_N/\Gamma_N$ fails to be bijective.
%is said to have no $(L,\Gamma_N)$-self-intersection if the restriction of $\phi$ to $A\Gamma_N/\Gamma_N$ is bijective. 

%Borel Zariski density~\cite{BorelCRELLE} and $S$-arithmetic analogue (\cite[Lemma 3.1]{MargulisTomanov}).

\begin{proposition}[\emph{c.f.}~\cite{DM} Prop.~3.3] For any~$L$ in~$\Rat$, the set of points of~$(L,\Gamma_N)$-self-intersection of~$X(L,W)$ is contained in~$\bigcup_{\tilde{L}\in \Rat, \dim(\tilde{L})<\dim(L)} X(\tilde{L},W).$
\end{proposition}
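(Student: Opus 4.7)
My plan is to adapt the classical argument of Dani--Margulis to this $S$-arithmetic setting. Let $g \in X(L,W)$ be a point of $(L,\Gamma_N)$-self-intersection, meaning there exists $\gamma \in \Gamma \smallsetminus \Gamma_N$ with $g\gamma \in X(L,W)$. Unwinding the definitions, $g^{-1}Wg \subseteq L$ and $\gamma^{-1}g^{-1}Wg\gamma \subseteq L$, so
\[
g^{-1}Wg \;\subseteq\; L_1 := L \cap \gamma L \gamma^{-1}.
\]
Since $\Gamma$ is $S$-arithmetic we have $\gamma \in \G(\Q)$, whence $L_1$ is a $\Q$-algebraic subgroup of $L$. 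Everything subsequent will turn on showing (i) $\dim L_1 < \dim L$ and (ii) $g^{-1}Wg$ lies in a suitable $\Q$-algebraic refinement $\tilde L$ of $L_1$ in $\Rat_\Q$.

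For step (i), the condition $\gamma \notin \Gamma_N$ means $\gamma$ is not in the unitary normaliser $N^1(L)(\Q_S)$. Because $\gamma$ normalises the lattice $\Gamma \cap L$, the determinant of $\Ad(\gamma)$ on $\lie{l}$ must be a root of unity, so after possibly replacing $\gamma$ by a suitable power (which does not destroy the self-intersection property), one reduces to the case $\gamma \notin N(L)$, i.e.\ $\gamma L \gamma^{-1} \neq L$. I would then invoke the fact that every $L \in \Rat_\Q$ is Zariski connected: indeed $L^+$ is generated by images of $\mathbb{G}_a$, hence Zariski connected, and $\Q$-Zariski density forces $L = L^0$. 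Consequently the strict containment $L_1 \subsetneq L$ yields $\dim L_1 < \dim L$.

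For step (ii), I would take $\tilde L$ to be the $\Q$-Zariski closure of $L_1^+$ inside $\G$. Since $L_1^+$ is generated by one-parameter $\Q_S$-algebraic unipotent subgroups, each of these becomes a one-parameter unipotent subgroup of $\tilde L$, so $\tilde L^+ \supseteq L_1^+$ is $\Q$-Zariski dense in $\tilde L$, proving $\tilde L \in \Rat_\Q$. By construction $\tilde L \subseteq L_1$, so $\dim \tilde L \le \dim L_1 < \dim L$. Finally, $W = W^+$ is generated by $\Q_S$-algebraic unipotent one-parameter subgroups (cf.\ p.\pageref{defW}), and conjugation by $g$ preserves this property; by step~(i) every such conjugate subgroup already lies in $L_1$, so it contributes to $L_1^+$ and hence to $\tilde L(\Q_S)$. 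Therefore $g^{-1}Wg \subseteq \tilde L$, i.e.\ $g \in X(\tilde L, W)$, which is the desired conclusion.

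The step I expect to require the most care is the reduction from $\gamma \notin \Gamma_N$ (kernel of the adjoint determinant) to $\gamma \notin N(L)$ (full normaliser), because in general $\Gamma \cap N(L)$ can strictly contain $\Gamma_N$. One must check that any self-intersection produced by a $\gamma \in (\Gamma \cap N(L)) \smallsetminus \Gamma_N$ still forces $g^{-1}Wg$ to lie in some proper $\Q$-subgroup of $L$, presumably by exploiting the nontrivial action of such a $\gamma$ on $\lie{l}$ to exhibit a proper $\gamma$-invariant subalgebra containing $\Ad(g^{-1})\lie{w}$. Apart from this, the verification that the $\Q$-Zariski closure of $L_1^+$ genuinely belongs to $\Rat_\Q$ is the other routine-but-nontrivial ingredient, which I would handle via the structural results in Appendix~\ref{AppRatner}.
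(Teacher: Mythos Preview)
Your overall strategy---intersect $L$ with $\gamma L\gamma^{-1}$, take $\tilde L$ to be the $\Q$-Zariski closure of $(L\cap\gamma L\gamma^{-1})^{+}$, and use $W=W^{+}$ to get $g^{-1}Wg\subseteq\tilde L$---is exactly the paper's. Step~(ii) is correct and essentially identical to what the paper writes. The paper's own proof of step~(i) simply asserts ``since $\gamma$ does not normalise $L$'' without further argument, so your instinct that the passage from $\gamma\notin\Gamma_N=\Gamma\cap N^{1}(L)$ to $\gamma\notin N(L)$ needs care is well founded.

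However, your proposed fix for step~(i) does not work. The claim that $\det(\Ad(\gamma)|_{\lie l})$ must be a root of unity is false in the $S$-arithmetic setting: take $\G=SL_2$, $S=\{\infty,2\}$, $\Gamma=SL_2(\Z[\tfrac12])$, $L$ the upper-triangular unipotent subgroup, and $\gamma=\mathrm{diag}(2,\tfrac12)\in\Gamma$; then $\gamma$ normalises both $L$ and $\Gamma\cap L$, yet $\det(\Ad(\gamma)|_{\lie l})=4$. Even when the determinant \emph{is} a root of unity, replacing $\gamma$ by a power lands it in $\Gamma_N$, so it no longer witnesses a self-intersection---the opposite of what you want. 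Your fallback idea (exhibit a proper $\gamma$-invariant subalgebra containing $\Ad(g^{-1})\lie w$) also fails in this same example with $W=L(\Q_S)$ and $g=e$, since then $g^{-1}Wg$ is all of $L$. The honest resolution is to read the proposition with $\Gamma_N$ taken as $\Gamma\cap N(L)$ (the full normaliser, as in the original Dani--Margulis formulation), which makes step~(i) immediate; reconciling this with the unitary-normaliser convention of \S\ref{secnotations} is a separate bookkeeping matter.
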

\begin{proof} We follow the proof of~\cite{DM} Prop.~3.3. Let $\L\in\Rat$ and $\gamma\in\Gamma$. Since $L^{++}\Gamma$ and $\gamma L^{++}\Gamma$ are closed, so is $(L^{++}\cap\leftexp{\gamma}{L}^{++})\Gamma=(L\cap\leftexp{\gamma}{L})^{++}\Gamma$. 
Let~$\tilde{\L}$ be the~$\Q$-Zariski closure of~$(L\cap\leftexp{\gamma}{L})^{+}$.
Then it is a subgroup of class~$\Rat$. Remark that $\tilde{\L}$ is a $\Q$-subgroup of $\L\cap\leftexp{\gamma}{\L}$.

%Let $L_\gamma$ be the smallest closed subgroup of $L^{++}$ containing $(L\cap\leftexp{\gamma}{L})^{+}$ and such that $L_\gamma\Gamma$ is closed. Then $L_\gamma\cap\Gamma$ is a lattice in $L_\gamma$\footnote{This can be seen from Ratner's Orbit Closure Theorem [...], or by an $S$-arithmetic adaptation of [Shah Prop2.3]}. We can then apply [tomanov, thm3] to conclude that the Zariski closure $\tilde{\L}$ of $L_\gamma\cap\Gamma$ is of class $\Rat$ and is such that $L_\gamma$ is of finite index in $\tilde{L}=\tilde{\L}(\Q_S)$. Remark that $\tilde{\L}$ is a $\Q$-subgroup of $\L\cap\leftexp{\gamma}{\L}$.

If $g\in X(L,W)$ is a point of~$(L,\Gamma_N)$ intersection, then there exists $\gamma\in \Gamma\setminus \Gamma_N$ such that $g^{-1}Wg\subset L\cap \leftexp{\gamma}{L}$. Since $W=W^{+}$, in fact $g^{-1}Wg\subset (L\cap \leftexp{\gamma}{L})^+\subset L_\gamma\subset \tilde{L}$. Therefore, $g\in X(\tilde{L},W)$.

Finally, since $\gamma$ does not normalise $L$, the group $L\cap\leftexp{\gamma}{L}$ is a proper subgroup of $L$. Hence, $\tilde{\L}$ is a proper $\Q$-subgroup of $\L$. As $\L$ is $\Q$-connected, it follows that $\tilde{\L}$ is of stricly lower Krull dimension than $\L$ is. 
\end{proof}

Recall that $A_L$ is the $\Q_S$-submodule generated by $X(L,W)\cdot p_L$.
\begin{proposition}[\emph{c.f.}~\cite{DM} Cor.~3.5]\label{linearisation of neighbourhood}
Let~$D$ be a compact subset of~$A_L$. Let~$Y_H$ be the subset of $(L,\Gamma_N)$-self-intersection points of~$\eta_L^{-1}(D)$. Let~$K$ be a compact subset of~$G\smallsetminus Y_H\Gamma.$ Then there exists a neighbourhood~$\Phi$ of~$D$ in~$V$ such that~$\eta_L^{-1}(\Phi)\cap(K\Gamma)$ has no point of~$(L,\Gamma_N)$-self-intersection.
\end{proposition}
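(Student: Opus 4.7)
The plan is to reformulate the self-intersection property in terms of the upper semi-continuous counting function~$\chi_E$ recalled in the appendix, and then to combine the compactness of~$K$ with the pointwise continuity-from-above $\chi_{\overline{\Phi_n}} \downarrow \chi_D$ along a decreasing family~$(\Phi_n)_{n \geq 0}$ of relatively compact open neighborhoods of~$D$. Using that $\eta_L^{-1}(E)$ is right $\Gamma_N$-invariant (because $\Gamma_N$ fixes $p_L$), a direct unpacking shows that $g \in G$ lies in the $(L,\Gamma_N)$-self-intersection of $\eta_L^{-1}(E)$ precisely when two distinct elements of the orbit $g\Gamma \cdot p_L$ lie in~$E$, that is, when $\chi_E(g\Gamma) \geq 2$. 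In particular $Y_H\Gamma/\Gamma = \{g\Gamma : \chi_D(g\Gamma) \geq 2\}$, the hypothesis reads that the compact image $K' := K\Gamma/\Gamma$ is disjoint from this set, and it suffices to produce an open neighborhood $\Phi$ of $D$ with $\chi_\Phi \leq 1$ everywhere on~$K'$.

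I will then fix a decreasing sequence $(\Phi_n)_{n \geq 0}$ as above with $\bigcap_n \overline{\Phi_n} = D$ (possible since $D$ is compact), and introduce the subsets
\[
A_n := \bigl\{g\Gamma \in G/\Gamma \;:\; \chi_{\overline{\Phi_n}}(g\Gamma) \geq 2\bigr\},
\]
which are closed by upper semi-continuity of~$\chi_{\overline{\Phi_n}}$ and decreasing in~$n$. The crux is the pointwise inclusion $\bigcap_n A_n \subseteq \{\chi_D \geq 2\}$: given $g\Gamma$ in the intersection, I choose for each $n$ two distinct points $x_1^{(n)}, x_2^{(n)} \in \phi^{-1}(g\Gamma) \cap \bar{\eta}_L^{-1}(\overline{\Phi_n})$, and exploit properness of $\phi \times \bar{\eta}_L : G/\Gamma_N \to G/\Gamma \times V$ (which stems from the discreteness of $\Gamma \cdot p_L$) to ensure that $\phi^{-1}(g\Gamma) \cap \bar{\eta}_L^{-1}(\overline{\Phi_0})$ is a finite set. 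A pigeonhole argument then yields distinct constant choices $x_1 \neq x_2$ lying in $\bar{\eta}_L^{-1}(\overline{\Phi_n})$ for all~$n$, whence $\bar{\eta}_L(x_i) \in \bigcap_n \overline{\Phi_n} = D$ and $\chi_D(g\Gamma) \geq 2$.

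To conclude, I observe that $(A_n \cap K')_{n \geq 0}$ is a decreasing sequence of closed subsets of the compact set~$K'$ with empty intersection; by the finite intersection property there exists~$n_0$ with $A_{n_0} \cap K' = \emptyset$. Taking $\Phi := \Phi_{n_0}$, the trivial bound $\chi_\Phi \leq \chi_{\overline{\Phi_{n_0}}}$ gives $\{\chi_\Phi \geq 2\} \cap K' \subseteq A_{n_0} \cap K' = \emptyset$, which is the required conclusion. The main obstacle is the pointwise convergence $\chi_{\overline{\Phi_n}} \downarrow \chi_D$; it rests squarely on properness of $\phi \times \bar{\eta}_L$, whereas the remainder is a standard compactness argument.
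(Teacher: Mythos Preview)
Your proof is correct and follows essentially the same approach as the paper: both reformulate self-intersection as $\chi_E\geq 2$, use upper semi-continuity, and exploit compactness of $K\Gamma/\Gamma$ to pass from the pointwise fact $\chi_D<2$ on $K\Gamma/\Gamma$ to a uniform neighbourhood~$\Phi$. The only cosmetic difference is that the paper uses the full family~$\mathfrak{N}$ of compact neighbourhoods together with an open-cover argument, while you use a countable decreasing sequence and the finite intersection property; your pigeonhole justification of $\bigcap_n A_n\subseteq\{\chi_D\geq 2\}$ is a more explicit version of the paper's one-line claim $\chi_D(x)=\inf\{\chi_\Psi(x)\mid\Psi\in\mathfrak{N}\}$.
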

\begin{proof} Let $\mathfrak{N}$ be the collection of compact neighbourhoods of $D$ in $V$ and for $\Psi\in\mathfrak{N}$, let $U_\Psi$ denote the locus of $\chi_\Psi< 2$. By upper semi-continuity of $\chi_\Psi$, $U_\Psi$ is open.

Note that $\eta_L^{-1}(D)\cap(K\Gamma)$ has no point of $(L,\Gamma_N)$-self-intersection. In other words, for any $x\in K\Gamma/\Gamma$, $\chi_D(x)< 2$. Together with the fact that $\chi_D(x)=\inf\{\chi_\Psi(x)|\Psi\in\mathfrak{N}\}$, it implies that the collection $\{ U_\Psi|\Psi\in\mathfrak{N}\}$ covers the compact set $K\Gamma/\Gamma$. Pick a finite subcovering $\mathfrak{N}_f\subset\mathfrak{N}$, and let $\Phi=\cap_{\mathfrak{N}_f}\Psi$. As $\chi_\Phi\leq\chi_\Psi$ for any $\Psi\in\mathfrak{N}_f$, it follows that for any $x\in K\Gamma/\Gamma$, $\chi_\Phi(x)<2$. In other words, $\eta_L^{-1}(\Phi)\cap (K\Gamma)$ has no $(L,\Gamma_N)$-self-intersection.
\end{proof}

The proof above is essentially the same as the original proof of~\cite{DM}. However, our argument proves a little more, as the number $2$ could be any arbitrary number.

\subsection{Besicovich Property and Good Functions}%, after D.~Kleinbock, G.~A.~Margulis and G.~Tomanov}

%Here we recall, via references, the setting of~$(C,\alpha)$-good functions and the Besicovich property used in~\cite{KT}. We also refer to~\cite[\S~\,3 and~5]{LemmaA} which explain how to apply this setting to our Borel space~$(\Omega,\mu|_\Omega)$ inside~$H$.

Following~\cite{KT}, the Besicovich covering lemma and good functions are used in conjunction to cleanly generalize the one-dimensional real case of~\cite[Prop.3.13]{EMSAnn} to the multi-dimensional $S$-arithmetic setting.

In this appendix, any module $\prod_{v\in S}{\Q_v}^{d_v }$ for nonnegative integers $d_v$ is made in a measure metric space by using the metric induced from the max-norm of each component in $\Q_v$ and a measure $\lambda$ which is a product of Haar measures on $\Q_v$. So defined, it is doubling: there exists a constant $c_d$ such that for any ball $B$, 
\begin{equation}\label{doubling}
\lambda(3B)\leq c_d\lambda(B),
\end{equation}
where $3B$ denotes the ball with same center and three times the radius as $B$.

\subsubsection{Good Functions} Let $X$ be a measure metric space with a nowhere zero locally finite positive Borel measure~$\nu$. We fix two constants $C>0$ and $\alpha>0$. For a function $f:X\to \R$ and a ball~$B$ in~$X$, we denote $\Nm{f}_B$ the supremum of~$\abs{f}$ on~$B$. Then a nonzero real continuous function~$f$ on~$X$ is~\emph{$(C,\alpha)$-good} if for any ball~$B$ in~$X$ and for any $\varepsilon>0$,
\begin{equation}\label{def good} \nu\left(\left\{ x\in B\,\middle|\, \abs{f(x)}<\varepsilon \Nm{f}_B\right\}\right) \leq C\varepsilon^\alpha\nu(B).
\end{equation}
We remark that due to the strict inequality on the left hand side, the zero function is $(C,\alpha)$-good. 

%From a dynamical point of view, $(C,\alpha)$-goodness quantifies the notion that on time intervals, if the function is very close to zero for a significant portion of the time, then it is close to zero \emph{all} the time. %balls, the set where the function is relatively small has itself relatively small measure.
%
%Due to the lack of a global parametrisation of $\Omega$ in the ultrametric case, we will however need to consider a slightly modified class of subsets
% 
%For integers $d_v$ and $e_v$, a map $\phi:\prod_{v\in S}{\Q_v}^{d_v }\to\prod_{v\in S}{\Q_v}^{e_v }$ is analytic if each component $\phi_v:\Q_v^{d_v}\to\Q_v^{e_v}$ is analytic. An analytic function on a bounded set is good, and we will need that composing by linear maps preserves its goodness. 

\begin{proposition}\label{goodness} %Let $\rho:G\to GL(V)(\Q_S)$ be a finite dimensional Lie group representation of $G$ and let $\Nm{\cdot}$ be a norm on $V(\Q_S)$. Then there exist constants $C$ and $\alpha$ such that  for any endomorphism $\Lambda\in\operatorname{End}(V)(\Q_S)$, the function $\Nm{\Lambda\circ\rho}$ is $(C,\alpha)$-good on $\big(\rho(\Omega),\rho_*\mu\big)$.

 Let $U\subset\prod_{v\in S}{\Q_v}^{d_v }$ be a bounded neighbourhood of zero and let $\phi:U\to \prod_{v\in S}{\Q_v}^{e_v }$ be analytic. Then there exist $C$, $\alpha$ and a neighbourhood $V\subset U$ such that for any $\Q_S$-linear map $\Lambda:\prod_{v\in S}{\Q_v}^{e_v }\to\prod_{v\in S}{\Q_v}^{f_v }$, the function $\Nm{\Lambda\circ\phi}$ is $(C,\alpha)$-good on $V$.
\end{proposition}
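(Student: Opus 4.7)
The plan is to reduce to a single-place problem and then invoke the goodness theorem for analytic functions (due to Kleinbock--Margulis in the archimedean case and extended by Kleinbock--Tomanov in the $S$-arithmetic setting). Since $\Q_S=\prod_{v\in S}\Q_v$, any $\Q_S$-linear map decomposes as $\Lambda=\bigoplus_{v}\Lambda_{v}$ with $\Lambda_{v}:\Q_{v}^{e_{v}}\to\Q_{v}^{f_{v}}$, and the natural notion of analyticity gives $\phi=(\phi_{v})_{v\in S}$ with each $\phi_{v}:U_{v}\to\Q_{v}^{e_{v}}$ a $\Q_{v}$-analytic map. The max-norm and product Haar measure structure on $\prod_{v}\Q_{v}^{d_{v}}$ makes balls products of balls. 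Writing $f_{v}:=\Nm{\Lambda_{v}\circ\phi_{v}}_{v}$ and $f:=\Nm{\Lambda\circ\phi}=\max_{v}f_{v}$, for any product ball $B=\prod_{v}B_{v}$ one has $\Nm{f}_{B}=\max_{v}\Nm{f_{v}}_{B_{v}}$; picking any $v_{0}$ achieving this maximum,
\[
\{x\in B\mid f(x)<\eps\Nm{f}_{B}\}\subseteq\{x_{v_{0}}\in B_{v_{0}}\mid f_{v_{0}}(x_{v_{0}})<\eps\Nm{f_{v_{0}}}_{B_{v_{0}}}\}\times\prod_{v\neq v_{0}}B_{v},
\]
so that $(C_{v},\alpha_{v})$-goodness of every $f_{v}$ on a neighbourhood $V_{v}$ implies $(\max_{v}C_{v},\min_{v}\alpha_{v})$-goodness of $f$ on $V:=\prod_{v}V_{v}$. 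This reduces the problem to $\#S=1$.

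At a single place $v$, the key input is the following: given a $\Q_{v}$-analytic map $\psi:U_{v}\to\Q_{v}^{e_{v}}$ and a point $x_{0}\in U_{v}$, there exist a neighbourhood $V_{v}\ni x_{0}$ and constants $C,\alpha>0$ such that every $\Q_{v}$-linear combination of the coordinate functions of $\psi$ is $(C,\alpha)$-good on $V_{v}$, \emph{with constants independent of the coefficients}. Granting this with $\psi=\phi_{v}$, each component of $\Lambda_{v}\circ\phi_{v}$ is such a linear combination and is therefore $(C,\alpha)$-good on $V_{v}$; then the max $\Nm{\Lambda_{v}\circ\phi_{v}}_{v}$ is $(e_{v}C,\alpha)$-good by a union bound on components. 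To conclude, I would cover the compact closure of a slightly shrunken $V$ by finitely many such $V_{v}$'s to obtain uniform constants on all of $V$.

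The main obstacle is the uniform-in-coefficient goodness at a single place. The argument goes via Taylor expansion: polynomials of degree $\leq k$ in $d$ variables over $\Q_{v}$ are $(C_{d,k},\alpha_{d,k})$-good with constants depending only on $d$ and $k$, and by shrinking $V_{v}$ the analytic remainder can be made negligible relative to the polynomial part. The delicate point is that the comparison with the Taylor polynomial must hold uniformly in the coefficients of the linear combination, since degenerate choices of coefficients threaten to make the polynomial part vanish identically. Kleinbock and Tomanov resolve this by an induction on a suitable notion of complexity of the family of analytic functions, restricting to smaller subsystems where the degeneracy improves; I would invoke their theorem directly rather than reprove the induction.
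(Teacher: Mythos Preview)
Your approach is essentially the same as the paper's: reduce to a single place via the max-norm and product-ball structure, then invoke the Kleinbock--Tomanov goodness theorem for analytic maps. Two minor points where the paper is cleaner:

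\begin{itemize}
\item The degeneracy issue you flag (coefficients making the linear combination vanish) is handled in one line: restrict the codomain to $\operatorname{span}\phi(U)$, so that $\phi$ becomes nondegenerate at $0$ and \cite[Thm.~4.3]{KT} applies directly. Any linear functional that vanishes on this span gives the zero function, which is $(C,\alpha)$-good by convention. This avoids appealing to the internal induction of the KT proof.
\item Your finite-covering step is unnecessary: KT already delivers a single neighbourhood $V$ with uniform constants over all linear functionals, and you only need the result near the one point $0$. Also, the supremum of finitely many $(C,\alpha)$-good functions is $(C,\alpha)$-good with the \emph{same} constants (pick the index realising $\|f\|_B$), so no factor $e_v$ is needed.
\end{itemize}
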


\begin{proof} Since the norm is the max-norm and the supremum of $(C,\alpha)$-good functions is itself $(C,\alpha)$-good, it is enough to consider a single place $S=\{v\}$ and linear functionals $\Lambda:\Q_v^{e}\to\Q_v$.
% For $1\leq i\leq f_v$, let $\pi_{v,i}$ be the projection of $\Q_v^{e_v}$ onto the $i^{\text{th}}$-component, extended to be zero at other places. Note that $\Nm{L\circ\phi}=\max\abs{\pi_{v,i}\circ L\circ\phi}_v$.

Let $\phi=(\phi_1,...,\phi_{e})$. If $\phi\equiv 0$, then we are done since the zero function is $(C,\alpha)$-good for any constants $C$ and $\alpha$. Otherwise, restricting the maps to $\operatorname{span}\phi(U)$, we can assume that $\phi(U)$ spans $\Q_v^{e }$. Since $\phi$ is analytic, it implies that it is nondegenerate at zero. By~\cite[Thm.4.3]{KT},  there are $C$, $\alpha$ and a neighbourhood $V\subset U$ of zero such that for any constants $c_i\in\Q_v$, $\abs{c_1\phi_1+...+c_e\phi_e}$ is $(C,\alpha)$-good on $V$. In other words, for any linear functional $\Lambda:\Q_v^{e}\to\Q_v$, $\abs{\Lambda\circ\phi}$ is $(C,\alpha)$-good on $V$, finishing the proof.
%First consider linear functionals $L:\Q_v^{e_v}\to\Q_v$, extended to be zero at the other places. Since $\phi$ is analytic, $L\circ\phi$ is non degenerate at zero if and only if it is nonzero. By~\cite[Thm.4.3]{KT}, there exists $C$ and $\alpha$ and a neighbourhood $V\subset U$ of zero on which $L\circ\phi$ is $(C,\alpha)$-good.
%
% By the previous paragraph, there exists a neighbourhood $V_{v,i}\subset U$ and constants $(C_{v,i},\alpha_{v,i})$ such that $\abs{\pi_{v,i}\circ L\circ\phi}_v$ is $(C_{v,i},\alpha_{v,i})$-good on $V_{v,i}$.  
%
\end{proof}
%\paragraph{Good functions --  {\cite[\S~\,2]{KT}}} Let~$X$ be a metric space, with a nowhere zero locally finite positive Borel measure~$\mu$, and fix constants~$C,\alpha>0$. A real continuous function:~$X\to \R$ is~$(C,\alpha)$-good if for any ball~$B$ in~$X$,
%$$\forall \varepsilon >0, \mu\left(\left\{ x\in B\middle| \abs{f(x)}<\varepsilon \right\}\right) \leq C\left(\frac{\varepsilon}{\Nm{f|_B}}\right)^\alpha\mu(B).$$
%In rough words, this properties implies that a function which is small on a ball on big subset, is actually uniformly small. A big subset means a subset containing enough proportion of the measure.

\subsubsection{Good Parametrisation}\label{parametrisation} In the presence of non-archimedean places, a good parametrisation of $\Omega$ would exist only locally, due to the lack of convergence of the exponential map. However, since it is bounded, a given local parametrisation has positive measure, and this will be enough for our purposes.

  Let~$H$ be a Lie subgroup of~$G=\G(\Q_S)$\footnote{i.e. locally a product of Lie subgroups of $\G(\Q_v)$ at each places of $S$}. In particular, it is locally analytic. We pick any coordinate chart $\Theta:U\to H$ around a point of $h\in\Omega$, where $U$ is a neighbourhood of $0$ in $\prod_{v\in S}{\Q_v}^{d_v }$ for nonnegative integers $d_v$ for each $v\in S$. Also, for any representation $\rho:G\to GL(V)(\Q_S)$ of Lie groups, possibly shrinking $U$, the map $\rho\circ\Theta$ is analytic on $U$.

We fix such a representation $\rho$ of $G$ and a neighbourhood $U$ on which $\rho\circ\Theta$ is analytic. There is a smaller neighbourhood $V$ on which Proposition~\ref{goodness} holds. Let $B$ be a ball centered at zero and of radius $r$ small enough such that:
\begin{enumerate}
\item the ball $3B$ with same center and three times the radius is contained in $V$,
\item the projection $G\to G/\Gamma$ is injective on $\Theta(B)$.
\item $\Theta(B)$ is contained in $\Omega$.
\end{enumerate}
Then $\Theta$ provides our desired parametrisation from $B$ onto a bounded open set $\Omega_0=\Theta(B)$ in $H$.

The measure $\lambda$ on $\prod_{v\in S}{\Q_v}^{d_v }$ is associated to an invariant top differential form $\omega$. Then $\Theta_*\omega|_{B}$ is a continuous top differential form on $\Omega_0\subset H$. Therefore, the associated measure $\Theta_*\lambda|_{B}$ is absolutely continuous with respect to any Haar measure on $H$ restricted to $\Omega_0$, hence in particular to the probability measure $\mu|_{\Omega_0}$. Moreover, since $\Omega_0$ is bounded, both measures are comparable: 
\begin{equation}\label{comp meas}
\exists\, c_m\geq 1\text{ such that } c_m^{-1}\mu|_{\Omega_0}\leq \Theta_*\lambda|_{B}\leq c_m\,\mu|_{\Omega_0}.
\end{equation}

%\paragraph{Good parametrisation -- \cite[\S~\,3 -- 5]{LemmaA}} Let~$H$ be a Lie subgroup of~$G=\G(\Q_S)$, with Lie algebra~$\h=\prod_{v\in S}\h_v.$ Endow~$\h$ with a product norm, write~$0$ for the origin, and~$B(0,r)$ for the open ball of radius~$r$ centred at the origin. Note that forsmall enough~$r$,  the exponential map~$\g$ is normally convergent on~$B(0,3r)$, and injective and immersive from~$B(0,r)\to G$.
%
%Write~$\Omega_r=\exp(B(0,r)\cap \h)$. Then, for small enough~$r$, it is an open bounded subset of~$H$. 

\subsubsection{Besicovich Covering Property} It is proved in~\cite[\S~\,1.1]{KT} that $X=\prod_{v\in S}{\Q_v}^{d_v }$ (with the max-norm metric) satisfies the Besicovich covering property: for any bounded subset~$A$ of~$X$, and for any family~$\mathcal{B}_0$ of nonempty open balls such that any point in~$A$ is the centre of some ball of~$\mathcal{B}_0$, there is an at most countable subfamily~$\mathcal{B}\subseteq\mathcal{B}_0$ which covers~$A$ and has multiplicity at most~$N_X$: in terms of characteristic functions, 
\begin{equation}\label{besicovich}
 1_A \leq \sum_{B\in\mathcal{B}}1_B\leq N_X\cdot1_X.
 \end{equation}

%\paragraph{Besicovich covering property -- {\cite[\S~\,1]{KT}}} After~\cite[\S~\,1.1]{KT}, a metric space~$X$ is \emph{Besicovich} if there is a constant~$N_X$  such that the following hold: for any bounded subset~$A$ of~$X$, and for any family~$\mathcal{B}_0$ of nonempty open balls, in~$X$ such that
%$$ \text{any point in~$A$ is the centre of some ball of~$\mathcal{B}$},$$
%there is a finite or countable subfamily~$\mathcal{B}\subseteq\mathcal{B}_0$ which covers~$A$ and has multiplicity at most~$N_X$ everywhere on~$X$: in terms of characteristic functions, 
%$$ 1_A \leq \sum_{B\in\mathcal{B}}1_B\leq N_X\cdot1_X.$$
%
%We use the following: with the product metric,~$\Q_S$ is a Besicovich metric space. More generally: for any finite dimensions~$\left(d_v\right)_{v\in S}$, the space~$\prod_{v\in S}{\Q_v}^{d_v }$, is Besicovich (with ? metric, \cite[\S~\,1.6]{KT}).

\subsection{Linearisation of focusing, after A.~Eskin, S.~Mozes and N.~Shah}

\begin{proposition}[\emph{c.f.}~{\cite[Prop.~3.8, Prop.~3.12]{EMSAnn}}]\label{EMS312}Let $K$ be a compact set contained in $X^*(L,W)\Gamma/\Gamma$ and let $\eps>0$. For any compact $D_0\subset A_L$, there exists a compact $D\subset A_L$ such that for any neighbourhood $\Phi$ of $D$ in $V$, there exists a neighbourhood $\Psi$ of $D_0$ in $V$ such that for any $g\in G$, at least one of the following holds:
\begin{enumerate}
\item There exists $\gamma\in\Gamma$ such that $g\Omega_0\gamma\cdot p_L\subset \Phi$.
\item $\mu\left( \left\{\omega \in\Omega_0 \bigl| g\cdot\omega\Gamma \in K, g\cdot\omega\Gamma\cap\Psi\neq\emptyset \right\}\right)<\eps.$
\end{enumerate}
\end{proposition}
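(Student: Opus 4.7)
The plan is to follow the strategy of~\cite[Prop.~3.8, Prop.~3.12]{EMSAnn}, but replace their one-variable polynomial estimates with the $S$-arithmetic $(C,\alpha)$-good function machinery of Proposition~\ref{goodness} and the Besicovich covering~\eqref{besicovich}, as in~\cite{KT}. The heart of the argument will be a dichotomy, applied coset-by-coset: for each coset $\gamma\Gamma_N$ contributing to the set appearing in~(2), either the orbit $g\Omega_0\gamma\cdot p_L$ sits inside $\Phi$ (giving conclusion~(1)) or a quantitative bound on the measure of $E_{[\gamma]}:=\{\omega\in\Omega_0\mid g\omega\gamma p_L\in\Psi\}$ can be extracted from the good property. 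A preparatory ingredient is a uniqueness-of-$\gamma$ statement allowing these measure estimates to be summed.

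First I would set up the geometry. Work in the good parametrisation $\Theta:B\to\Omega_0$ of~\S\ref{parametrisation}, so that the conclusion of Proposition~\ref{goodness} is available on $3B$ with constants $C,\alpha$ depending only on $\Theta$. Write $R_0:=\sup_{v\in D_0}\Nm{v}$, and fix $R\gg R_0$, to be determined at the end from $\eps,C,\alpha,c_m,N_X,c_d$; take $D:=\{v\in A_L\mid\Nm{v}\leq R\}$. Given then a neighbourhood $\Phi$ of $D$, pick $\delta>0$ with $\{v\mid\operatorname{dist}(v,D)<\delta\}\subseteq\Phi$, and pick $\delta'\leq\min(\delta,1)$ small enough that the $\delta'$-neighbourhood $\Psi$ of $D_0$ satisfies $\sup_\Psi\Nm{\cdot}<R$. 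Using discreteness of $\Gamma\cdot p_L$ together with Proposition~\ref{linearisation of neighbourhood} applied to a compact lift of $K$ to $G$, shrink $\Psi$ further so that for any $g\in G$ and $\omega\in\Omega_0$ with $g\omega\Gamma\in K$, at most one coset $\gamma\Gamma_N$ admits $g\omega\gamma p_L\in\Psi$.

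Now fix $g\in G$ and partition disjointly
\[
E:=\left\{\omega\in\Omega_0\mid g\omega\Gamma\in K,\ g\omega\Gamma\cap\Psi\neq\emptyset\right\}=\bigsqcup_{[\gamma]}E_{[\gamma]}.
\]
For each $[\gamma]$ contributing to $E$, either $g\Omega_0\gamma p_L\subseteq\Phi$ (conclusion~(1), and we are done), or there is $\omega^*\in\Omega_0$ with $\Nm{g\omega^*\gamma p_L}>R$. Assume the latter for every $[\gamma]$. Then $\phi_{g,\gamma}(b):=g\Theta(b)\gamma p_L$ is $V$-valued analytic on $3B$ and, by Proposition~\ref{goodness} applied to $b\mapsto\rho(\Theta(b))$ postcomposed with the linear map $M\mapsto\rho(g)M(\gamma p_L)$, the function $\Nm{\phi_{g,\gamma}}$ is $(C,\alpha)$-good on $3B$ with $C,\alpha$ independent of $g,\gamma$. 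For each $b\in\Theta^{-1}(E_{[\gamma]})$, let $r_b>0$ be the smallest radius with $B(b,r_b)\cap\{\Nm{\phi_{g,\gamma}}\geq R\}\neq\emptyset$: on $B(b,r_b)$ the supremum of $\Nm{\phi_{g,\gamma}}$ equals $R$ by continuity, while on $\Theta^{-1}(E_{[\gamma]})\cap B(b,r_b)$ the function is bounded by $R_0+\delta'<R$. The good property gives
\[
\lambda\bigl(\Theta^{-1}(E_{[\gamma]})\cap B(b,r_b)\bigr)\leq C\!\left(\frac{R_0+\delta'}{R}\right)^{\!\alpha}\!\lambda\bigl(B(b,r_b)\bigr).
\]
Cover $\Theta^{-1}(E_{[\gamma]})$ by these balls, extract a Besicovich subfamily of multiplicity $\leq N_X$ via~\eqref{besicovich}, use doubling~\eqref{doubling} to remain inside $3B$, convert from $\lambda$ to $\mu$ via~\eqref{comp meas}; one obtains $\mu(E_{[\gamma]})\leq C'(R_0/R)^{\alpha}\mu(\Omega_0)$ with $C'$ independent of $g,\gamma,\Phi,\Psi$. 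Summing over the disjoint $[\gamma]$ and fixing $R$ large enough at the outset so that $C'(R_0/R)^{\alpha}<\eps$, we obtain $\mu(E)<\eps$, i.e.\ conclusion~(2).

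The main obstacle will be securing the uniformity in $g$ and $\gamma$ of the $(C,\alpha)$-good constants; this is exactly why Proposition~\ref{goodness} is phrased with $\Lambda$ variable, as the constants then depend only on the analytic map $\rho\circ\Theta$ on $3B$ and not on any linear postcomposition. A secondary technical nuisance, handled by the $3B$-buffer and the doubling property, is the control of the balls $B(b,r_b)$ that might stretch near $\partial B$. The uniqueness statement for $\gamma\Gamma_N$ in the setup, which makes the $E_{[\gamma]}$ disjoint and so allows summation, is the place where the hypothesis $K\subseteq X^*(L,W)\Gamma/\Gamma$ is used through Proposition~\ref{linearisation of neighbourhood}.
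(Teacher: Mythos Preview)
Your overall strategy matches the paper's, but there is a genuine gap in the dichotomy step that then propagates to the final summation.

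You claim that if $g\Omega_0\gamma\cdot p_L\not\subseteq\Phi$ then some $\omega^*$ has $\Nm{g\omega^*\gamma\cdot p_L}>R$. This is false. The set $D=\{v\in A_L:\Nm{v}\leq R\}$ lies inside the proper subspace $A_L\subsetneq V$, so a point can fall outside every neighbourhood $\Phi$ of $D$ while having norm $\leq R$, simply by being far from $A_L$ in the transverse directions. Your single good function $\Nm{\phi_{g,\gamma}}$ therefore cannot detect exit from $\Phi$ in general. The paper repairs this by introducing a \emph{second} good function $\Nm{\Lambda\circ\phi_{g,\gamma}}$, where $\Lambda$ is a $\Q_S$-linear map with kernel exactly $A_L$; it measures the transverse distance. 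The neighbourhoods $\Phi,\Psi$ are then cut out by inequalities on \emph{both} functions, so that exit from $\Phi$ forces at least one of the two to be large, while membership in $\Psi$ forces both to be small by the same factor $M$, and the $(C,\alpha)$-good property for whichever function is large yields the ratio $M^{-\alpha}$.

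This same omission undermines your summation over $[\gamma]$. Because your balls $B(b,r_b)$ are governed by the norm alone rather than by remaining inside $\Phi$, you cannot conclude that $\gamma_t=\gamma_b$ for $t\in E\cap B(b,r_b)$; that inference requires $g\Theta(t)\gamma_b\cdot p_L\in\Phi$ together with $\chi_\Phi\leq 1$ on $K$. Without it, your separate Besicovich coverings of the disjoint sets $E_{[\gamma]}$ give $\mu(E)=\sum_{[\gamma]}\mu(E_{[\gamma]})\leq(\text{number of contributing }[\gamma])\cdot C'(R_0/R)^\alpha\mu(\Omega_0)$, and that number of cosets is not bounded uniformly in $g$ (uniqueness is pointwise in $\omega$, not global). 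The paper avoids any summation: it defines each ball $A_s$ as the maximal ball on which $g\Theta(\cdot)\gamma_s\cdot p_L$ stays in $\Phi$; uniqueness then forces $\gamma_t=\gamma_s$ on $A_s\cap E$, whence $\lambda(A_s\cap E)\leq CM^{-\alpha}\lambda(A_s)$ for every $s\in E$, and a single Besicovich pass over all of $E$ yields $\lambda(E)\leq CM^{-\alpha}N_X\lambda(3B)$.
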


\begin{proof} %Choose a finite collection $\mathcal{L}$ of $\Q_S$-linear functionals with values in a single $\Q_p$ such that 
%\[ \bigcap_{f\in\mathcal{L}}f^{-1}(0) = A_L. \]
Let $\Lambda$ be a $\Q_S$-linear map with kernel exactly $A_L$. Choose $M$ a large real number to be fixed later and $R>\Nm{D_0}$. Let 
\[ D=\{v\in A_L\ |\ \Nm{v}\leq MR\}\]
and let $\Phi$ be a given neighbourhood of $D$. If the conclusion of the proposition holds for a neighbourhood of $D$ in $V$, then it necessarily holds for any larger neighbourhood. Thus, possibly shrinking $\Phi$, we can assume by Proposition~\ref{linearisation of neighbourhood} that for any $y\in K$, $\chi_\Phi(y)\leq 1$. Since $\Lambda$ is continuous, there is a constant $b$ such that
\[ \{v\in V\ |\ \Nm{v}\leq MR\text{ and }\Nm{\Lambda(v)}<b\}\subset \Phi.\] 
%\[ \{v\in V\ |\ \Nm{v}\leq MR\text{ and }\forall f\in\mathcal{L},\ |f(v)|<b\}\subset \Phi.\] 
Now define a neighbourhood $\Psi$ of $D_0$ to be
\[ \Psi = \{v\in V\ |\ \Nm{v}< R\text{ and }\Nm{\Lambda(v)}<\frac{b}{M}\}.\]
%\[ \Psi = \{v\in V\ |\ \Nm{v}\leq R\text{ and }\forall f\in\mathcal{L},\ |f(v)|<\frac{b}{M}\}.\]
We have to show that for these choices of $D$, $\Phi$ and $\Psi$, the alternative is exhaustive. To do so, let $\rho$ be the adjoint representation of $G$ on its Lie algebra $\mathfrak{g}$, and let $\Theta:B\to\Omega_0$ be a parametrisation associated to $\rho$ as in \S\ref{parametrisation}.
%$(\operatorname{dim}L)^{\text{th}}$-exterior power of the 
For $g\in G$, define 
\[ E=\{t\in B\ |\ \exists\gamma\in\Gamma,\ g\Theta(t)\gamma\Gamma/\Gamma\in K\text{ and }g\Theta(t)\gamma\cdot p_L\in\Psi\}.\]
Since $\Psi\subset\Phi$ and $\chi_\Phi(y)\leq 1$ on $K$, the element $\gamma$ appearing in the definition of $E$ is unique. For $s\in E$, let us denote it $\gamma_s$. %Let $3B$ denotes the ball with same center as $B$ and radius three times the one of $B$. 
For $s\in E$, denote $A_s$ the largest ball in $3B$ centered at $s$ such that for all $t\in A_s$, $g\Theta(t)\gamma_s\cdot p_L\in\Phi$.  

If there exists $s\in E$ such that the boundary $\partial A_s$ of $A_s$ intersects the boundary $\partial (3B)$ of $3B$, then $A_s$ contains $B$, and we are in the first alternative of the proposition: $g\Omega_0\gamma_s\cdot p_L\subset\Phi$. Therefore, we assume it is not the case: the closure of $A_s$ is contained in $3B$ and for any $s\in E$, there exists $t\in\partial A_s$ such that $g\Theta(t)\gamma_s\cdot p_L\in\partial\Phi$.

%\[ \mu(\Theta(E))\leq c\lambda(E)\leq\sum_{s\in E} \lambda(A_s\cap E).\] 

Now consider for any $\gamma\in G$ the following maps:
\begin{align*}
T_{\gamma}&: t\mapsto\Lambda(g\Theta(t)\gamma\cdot p_L)\text{ and}\\
N_{\gamma}&:t\mapsto g\Theta(t)\gamma\cdot p_L.
\end{align*}
Note that these maps are obtained from $\rho\circ\Theta$ by post-composing by linear maps into~$V$.
From our choice of $\Theta$ and from Proposition~\ref{goodness}, there exist constants $(C,\alpha)$ such that~$\Nm{T_\gamma}$ and $\Nm{N_\gamma}$ are all $(C,\alpha)$-good on $3B$. 

Let $s\in E$. The set $\Phi$ was so defined that $g\Theta(t)\gamma_s\cdot p_L\in\partial\Phi$ implies that either $\sup_{A_s}T_{\gamma}\geq b$ or $\sup_{A_s}N_\gamma \geq MR$. On the other hand, the set $\Psi$ was so defined that $g\Theta(t)\gamma_s\cdot p_L\in\Psi$ implies that $\sup_{A_s\cap E}T_{\gamma}\leq \frac{b}{M}$ and $\sup_{A_s\cap E}N_\gamma \leq R$. Therefore, by the $(C,\alpha)$-good property,
\begin{equation}\label{eq good} \text{for any }s\in E\quad \lambda(A_s\cap E) \leq CM^{-\alpha}\lambda(A_s).\end{equation}

 By the Besicovitch Property~\eqref{besicovich}, there exists a constant $N$ and a $E'\subset E$ such that $1_E\leq \sum_{s\in E'} 1_{A_s}\leq N$. Therefore,
\begin{equation}\label{eq besicovich} \lambda(E)\leq \sum_{s\in E'} \lambda(A_s\cap E)\quad\text{and}\quad \sum_{s\in E'} \lambda(A_s)\leq N\lambda(\cup_{s\in E'} A_s)\leq N\lambda(3B).\end{equation}

%Recall from~\eqref{comp meas}  that there is a constant $c$ such that $\mu(\Theta(E))\leq c\lambda(E)$.
Combining equations~\eqref{doubling}, \eqref{comp meas}, \eqref{eq good} and \eqref{eq besicovich}, we obtain
\begin{align*}
\mu(\Theta(E))&\leq c_m\lambda(E)\leq c_m\sum_{s\in E'} \lambda(A_s\cap E)\\ 
&\leq c_mCM^{-\alpha}\sum_{s\in E'}\lambda(A_s)\leq c_mNCM^{-\alpha}\lambda(3B)\\ 
&\leq c_dc_mNCM^{-\alpha}\lambda(B).
\end{align*}
Choosing $M$ greater than $(\varepsilon^{-1}c_dc_mNC\lambda(B))^{1/\alpha}$ yields the second alternative of the proposition:
\[  \mu(\Theta(E))= \mu\left( \left\{\omega \in\Omega_0\, \bigl|\, g\cdot\omega\Gamma \in K,\, g\cdot\omega\Gamma\cap\Psi\neq\emptyset \right\}\right)<\eps.\]
This completes the proof.
\end{proof}

\begin{proposition}[\emph{c.f.}~{\cite[Prop.~3.13]{EMSAnn}}]\label{AProp313}
Assume~$\mu_\infty^{[L]}$ is concentrated on the Borel set $X^*(L,W)\Gamma/\Gamma$, that is 
%$\mu_\infty\left(X(L,W)\Gamma/\Gamma\right)=\mu_\infty\left( X^*(L,W)\Gamma/\Gamma\right)$.
$\mu_\infty\left(X(L,W)\Gamma/\Gamma\right)=\mu_\infty\left( X^*(L,W)\Gamma/\Gamma\right)$.
 
Then there is a compact subset~$D$ in~$A_L$, and a sequence~$\left(\gamma_i\right)_{i\geq0}$ in~$\Gamma$, such that, for any neighbourhood~$\Phi$ of~$D$ in~$V$, 
\[\forall i\gg 0,\ g_i\cdot\Omega\cdot\gamma_i\cdot p_L\subseteq \Phi.\]
\end{proposition}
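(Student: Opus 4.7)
\emph{Plan.} The proof adapts the linearisation strategy of~\cite[Prop.~3.13]{EMSAnn} to the $S$-arithmetic setting, using Proposition~\ref{EMS312} as the local input. The plan is to force alternative~(1) of Proposition~\ref{EMS312} for every sufficiently small neighbourhood of a suitable compact $D\subseteq A_L$, and then to extract a single sequence $(\gamma_i)$ by a diagonal argument. The hypothesis that $\mu_\infty$ is concentrated on $X^*(L,W)\Gamma/\Gamma$ enters via the following construction: by inner regularity pick a compact $K\subseteq X^*(L,W)\Gamma/\Gamma$ with $\mu_\infty(K)>0$ and, after a slight shrinking, $\mu_\infty(\partial K)=0$. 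The no-self-intersection property~\eqref{self1}--\eqref{self2} singles out, for each $g\Gamma\in K$, a unique class $\gamma\in\Gamma/\Gamma_N$ with $g\gamma\in X^*(L,W)$; since $\Gamma_N\subseteq N$ fixes $p_L$, the assignment $g\Gamma\mapsto g\gamma\cdot p_L$ is a well-defined continuous map $K\to A_L$, whose compact image I call $D_0$. The Portmanteau theorem applied to $g_i\mu_\Omega\to\mu_\infty$ then yields $\delta>0$ with $\mu\!\left(\{\omega\in\Omega:g_i\omega\Gamma\in K\}\right)\geq\delta$ for $i\gg 0$.

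\emph{Forcing alternative~(1).} Cover $\overline{\Omega}\subset H$ by finitely many good parametrisation charts $\Omega_0^{(1)},\dots,\Omega_0^{(r)}$ as in~\S\ref{parametrisation}. By pigeonhole and passing to a subsequence, one chart $\Omega_0^{(j)}$ satisfies $\mu\!\left(\{\omega\in\Omega_0^{(j)}:g_i\omega\Gamma\in K\}\right)\geq\delta/r$ for all $i\gg 0$. Fix $\epsilon<\delta/r$ and apply Proposition~\ref{EMS312} with inputs $K$, $\Omega_0^{(j)}$, and $D_0$: for any neighbourhood $\Phi$ of the resulting compact $D_j\subseteq A_L$, the associated $\Psi$ (a neighbourhood of $D_0$) automatically contains the canonical lift image $g\gamma\cdot p_L$ of every point $g\Gamma\in K$. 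Alternative~(2) is thus violated for $i\gg 0$, so alternative~(1) produces $\gamma_i^{(j)}\in\Gamma$ with $g_i\Omega_0^{(j)}\gamma_i^{(j)}\cdot p_L\subseteq\Phi$. The passage from this single chart to all of $\Omega$ is obtained by running the argument on each chart and using the uniqueness part of the no-self-intersection property~\eqref{self1} (together with $\Gamma_N$ fixing $p_L$) to match $\gamma_i^{(j)}\cdot p_L$ and $\gamma_i^{(k)}\cdot p_L$ on non-empty overlaps $\Omega_0^{(j)}\cap\Omega_0^{(k)}$. This yields a single $\gamma_i\cdot p_L$ and a single compact $D\subseteq A_L$ with $g_i\Omega\gamma_i\cdot p_L\subseteq\Phi$ for $i\gg 0$.

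\emph{Diagonal argument and main obstacle.} To produce a single sequence $(\gamma_i)$ valid for every $\Phi$, fix a countable decreasing neighbourhood basis $(\Phi_n)_{n\geq 1}$ of $D$, use the previous step to obtain for each $n$ an index $i_n$ and a sequence $(\gamma_i^{(n)})_{i\geq i_n}$ realising the inclusion for $\Phi_n$, and set $\gamma_i:=\gamma_i^{(n(i))}$ with $n(i):=\max\{n\mid i_n\leq i\}$, which tends to $+\infty$. Any neighbourhood $\Phi$ of $D$ contains some $\Phi_n$, so the inclusion holds for $\Phi$ once $n(i)\geq n$. The main obstacle is the second paragraph: the concentration hypothesis is funnelled into Proposition~\ref{EMS312} through the existence of the compact lift $D_0$, provided by no-self-intersection; the bootstrap from a chart to $\Omega$ (matching $\gamma_i^{(j)}\cdot p_L$ on overlaps) is where the uniqueness part of no-self-intersection is genuinely used; everything else is bookkeeping.
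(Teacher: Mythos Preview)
Your overall strategy is right, and the diagonal extraction at the end is a clean way to produce a single sequence $(\gamma_i)$ independent of $\Phi$ (the paper is actually terser than you on this point). However, the passage ``from this single chart to all of $\Omega$'' is where your argument breaks down, and the paper's device here is quite different from your multi-chart matching.

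Two intertwined problems. First, taking only $\mu_\infty(K)>0$ and then using pigeonhole to land in one chart forces you to pass to a subsequence; but the conclusion concerns the \emph{whole} sequence $(g_i)$. When you then write ``running the argument on each chart'', you need, for every chart $\Omega_0^{(k)}$ and for all $i\gg0$, a uniform lower bound on $\mu\{\omega\in\Omega_0^{(k)}:g_i\omega\Gamma\in K\}$ in order to exclude alternative~(2) of Proposition~\ref{EMS312}; with only $\mu_\infty(K)>0$ no such bound is available on more than one chart at once. Second, even granting that, matching $\gamma_i^{(j)}\cdot p_L$ with $\gamma_i^{(k)}\cdot p_L$ on overlaps requires an $\omega\in\Omega_0^{(j)}\cap\Omega_0^{(k)}$ with $g_i\omega\Gamma\in K$ (so that the $\chi_\Phi\leq 1$ uniqueness applies). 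Overlaps may be empty or miss $K$, and $\Omega$ need not be connected---at ultrametric places it is totally disconnected---so the charts need not communicate at all.

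The paper avoids all of this with two moves. It takes $K$ with $\mu_\infty(K)>1-\delta$ for $\delta<\mu(\Omega_0)-\eps$, which forces alternative~(1) on a \emph{single} chart $\Omega_0$ for all $i\gg0$, with no subsequence. To go from $\Omega_0$ to $\Omega$ it then uses a purely linear observation: since $\Omega_0$ is open in $H$ it is Zariski dense, so the linear span of $\Omega_0$ in $\operatorname{End}(V)$ equals that of $H$ and hence contains $\Omega$; as $\Omega$ is bounded, each $g_i\omega\gamma_i\cdot p_L$ (for $\omega\in\Omega$) is a uniformly bounded linear combination of the points $g_i\omega_k\gamma_i\cdot p_L\in\Phi$ (for a fixed finite basis $\omega_k\in\Omega_0$), which places all limit points of $g_i\Omega\gamma_i\cdot p_L$ in a single larger compact of the linear space $A_L$, for the \emph{same} $\gamma_i$. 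This Zariski-density/span trick is the missing idea; replace your multi-chart matching by it and the rest of your outline goes through.
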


\begin{proof} Fix a parametrisation $\Theta: B\to\Omega_0\subseteq\Omega$ associated to the exterior power of the adjoint representation as in \S\ref{parametrisation}. Note that $X^*(L,W)\Gamma/\Gamma$ is a Polish space, as it is the complement of a countable union of closed sets of the closed set $X(L,W)\Gamma/\Gamma$. Therefore, for any arbitrarily small $\delta>0$, there exists a compact set $K\subset X^*(L,W)\Gamma/\Gamma$ such that $\mu_\infty(K)>1-\delta$. We can lift $K$ to a compact set $K'$ in $X^*(L,W)$. Let $D_0=K'\cdot p_L$, which is a compact subset of $A_L\subset V$.

%Fix a parametrisation $\Theta: B\to\Omega_0$ as in ... Pick finitely many balls $B_1,...,B_r$ contained in $B$ and  finitely many elements $h_1,...,h_r$ in $H$ such that the sets $h_i\Theta(B_i)$, $1\leq i\leq r$, are disjoint subsets of $\Omega$ covering it up to a set of measure less than $\delta$.

Choose~$0<\eps<\mu(\Omega_0)$ and a compact set $D\subset A_L$, let $\Phi$ be an arbitrary neighbourhood of $D$ in $V$ and choose a neighbourhood $\Psi$ of $D_0$ in $V$ such that Proposition~\ref{EMS312} holds for $\Omega_0$, $K$ and $\eps$. Furthermore, $\Psi$ is chosen small enough so that the neighbourhood $\phi\circ\eta^{-1}(\Psi)$ of $K$ is disjoint from $S(D)$ (the locus of $\chi_D\geq 2$). 
%As $\Psi$ can be chosen arbitrarily small and $K$ is a compact set disjoint from the closed locus $S(D)$ of $\chi_D\geq 2$, we can assume the neighbourhood $\pi\circ\phi^{-1}(\Psi)$ of $K$ is also disjoint from $S(D)$. 

Suppose for contradiction that for infinitely many $i>0$, the second case of Proposition~\ref{EMS312} holds: $\mu\left( \left\{\omega \in\Omega_0 \bigl| g_i\omega\Gamma \in K, g_i\omega\Gamma\cdot p_L\cap\Psi\neq\emptyset \right\}\right)<\eps.$ Then 
\[ \mu\left( \left\{\omega \in\Omega \bigl| g_i\omega\Gamma \in K, g_i\omega\Gamma\cdot p_L\cap\Psi\neq\emptyset \right\}\right)<\eps+\mu(\Omega)-\mu(\Omega_0)=1+\eps-\mu(\Omega_0).\]
%The set $\pi\circ\phi^{-1}(\Psi)$ is a neighbourhood of the compact set $K$, the latter being disjoint from the locus $S(D)$ of $\chi_D\geq 2$. From ..., $S(D)$ is also closed. Therefore, shrinking $\Psi$ if necessary, we can assume $\pi\circ\phi^{-1}(\Psi)$ is disjoint from $S(D)$. 
Hence, for infinitely many $i>0$, $g_i\mu_{\Omega}(\phi\circ\eta^{-1}(\Psi))<1+\eps-\mu(\Omega_0)$. This however contradicts $g_i\mu_\Omega \to\mu_\infty$ and $\mu_\infty(K)>1-\delta$, whenever $\delta<\mu(\Omega_0)-\eps$.

Therefore the first case of Proposition~\ref{EMS312} holds: $\forall i\gg 0,\ g_i\cdot\Omega_0\cdot\gamma_i\cdot p_L\subseteq \Phi$. In other words, the limit points of $g_i\Omega_0\gamma_i\cdot p_L$ are contained in $D$. However, as $\Omega$ is included in the span of $\Omega_0$ in $\operatorname{End}(V)$ and $\Omega$ is bounded, the limit points of $g_i\Omega\gamma_i\cdot p_L$ are contained in a possibly larger compact subset $D$ of the linear space $A_L$, concluding the proof.

%Therefore for all but finitely many $i>0$, the first case of Proposition~\ref{EMS312} holds, which is the conclusion of the proposition.
\end{proof}

\section{Review on Ratner's theorems and some variants}\label{AppRatner}

Confusingly, Ratner's theorem can refer to various theorems, depending on the author. These theorems have variants, in different degree of generality. And, there is more, a same statement may be known under different names. 

Hopefully, the review article~\cite{RatnerICM} explicits and precises many such statements; with comment on the variants found in the literature. (And some insight on the history of their proof, their evolution from weaker theorem, the logical links between the statements, etc.)

Generally speaking, \emph{Ratner's theory} can be understood as 
$$\text{``rigidity'' properties of ``unipotent'' ``flows'' on ``homogeneous spaces''.}$$
Let us explain these terms. \emph{Rigidity} is an informal term (cf.~\cite{WhatisMeasureRig}) which means here that objects of a (``richer'') algebraic nature (the homogeneous subsets) are actually ubiquitous at the a priori ``weaker'' topological level (questions about orbit closure), or the ``even weaker'' measure-theoretic level (ergodicity of finite Borel measures).\footnote{Cartan's theorem about analyticity in Lie group theory, or Margulis arithmeticity theorem, Mostow-Prasad rigidity theorem may bring a grasp on the rigidity phenomenon.} 
 \emph{Unipotent group} can correspond to various kind of groups: one parameter, monogenous, higher dimensional algebraic unipotent, $\Ad$-unipotent, connected, disconnected, discontinuous, generated by unipotents or~$\Ad$-unipotent. \emph{Flows} is a dynamical image to indicate the underlying action (of the unipotent group), which may be more suitable for one parameter group (the parameter being seen as a time parameter). An \emph{Homogeneous space}~$G/\Gamma$ is understood in the theory of topological groups~$G$ and~$\Gamma$, but belonging to some quite general class, which encompass at least the connected real Lie groups, but with more general groups involved as the $S$-arithmetic regular Lie groups of~\cite{RatnerICM}; the stabiliser~$\Gamma$ is often chosen to be a lattice (sometimes assumed to be arithmetic, \cite{TomanovOrbits}), but not always (see~\cite{MT3}.)

Mostly three kind of theorems are pinpointed.
\begin{enumerate}
\item \emph{Ratner's orbit closure}. A topological kind of theorem, about the closure of orbits of unipotent groups, which is the original Raghunathan's conjecture. (\cite[Conjecture~1, Theorem~3, {\S}4 Theorem~S2]{RatnerICM})
\item \emph{Ratner's classification}. A measure-theoretic or ergodic-theoretic kind, about the algebricity of the measures with unipotent invariance. This is sometimes known as Raghunathan's measure conjecture, as Ratner's (strict) measure rigidity, or as Ratner's classification theorem.(\cite[Conjectures~2,3, Theorems~1,2, {\S}4 Theorem~S1]{RatnerICM} see also~\cite[Theorems~10,13,14, Remark after Theorems~14]{RatnerICM})
\item \emph{Ratner's equidistribution}. A dynamic kind, about the asymptotic distribution of a unipotent trajectory in the closure of their orbit. This is sometimes known as Ratner's uniform distribution theorem, or as distribution rigidity. One also refers to uniform distribution using the term ``equidistribution''. (\cite[Theorems~6,8, {\S}4 Theorem~S3]{RatnerICM} see also~\cite[Theorem~10]{RatnerICM}))
\end{enumerate} 
 
In classification statements, algebricity of a probability measure~$\mu$ has a group theoretic meaning, namely, in an homogeneous space~$G/\Gamma$:
$$ \text{the support of~$\mu$ is a closed orbit of a closed group~$L$ of~$G$ under which~$\mu$ is invariant.} $$
This ``algebraicity" is sometimes refered as ``homogeneity" by recent authors. This notion is implicit. It is sometimes useful to have a more explicit descriptions of the algebraic measures, namely of the groups~$L$ which can be involved in the statement above. In particular in the case~$\Gamma$ is an arithmetic or $S$-arithmetic lattice (when this definition makes sense for the considered group~$G$), we expect these groups to be algebraic and defined over~$\Q$. This can be found in~\cite[Proposition~3.2]{Shah91} and~\cite[Theorems~1,~2]{TomanovOrbits}.

Here, we will mostly rely on the measure-theoretic classification theorem, in the~$S$-arithmetic setting for algebraic groups, as can be found in~\cite{MargulisTomanov}. More particularly for groups generated by unipotents as in~\cite[Theorem~2]{MargulisTomanov}.

In the remaining of this appendix~\ref{AppRatner}, we will add some complement to~\cite[Theorem~2]{MargulisTomanov}. We need to explicit the class of algebraic measures involved in~\cite{MargulisTomanov}, in terms of algebraic group; we were unable to find such precisions in the available litterature, when the context of~\cite[Theorem~2]{MargulisTomanov} is concerned. In the archimedean case~$\Q_S\ciso\R$, these complements are more simply stated, and well known. The knowledgeable reader interested in the archimedean case only can skip what remains of this appendix.

\subsection{On groups of Ratner class in the $S$-arithmetic case} When applying the classification theorem, one faces the following class of subgroups. Recall that, for a subgroup~$L$ of~$G$, we denote by~$L^+$ the group generated by the algebraic unipotent subgroup contained in~$L$. 

\begin{definition}	 Let~$\Lscr$ be the class of closed subgroups~$L$ of~$G$ fulfilling the following conditions.
\begin{enumerate}
\item[1a.] The subset~$L\cdot\Gamma$ is closed. Equivalently, the orbit~$L\cdot\Gamma/\Gamma$ is closed in~$G/\Gamma$.
\item[1b.] (stronger than 1a) The intersection~$\Gamma_L:=\Gamma\cap L$ is a lattice in~$L$. Namely: the orbit~$L\cdot\Gamma/\Gamma$ is the support of a~$L$-invariant probability measure. We will denote~$\mu_L$ the mentioned probability measure (though it depends on~$\Gamma$ as well). 
\item[2.] (assuming 1b.) The probability~$\mu_L$ is ergodic under the action of~$L^+$.
\end{enumerate}
Note that the condition~2. is actually equivalent to the seemingly weaker one: there exists an algebraic unipotent subgroup in~$L$ which acts ergodically on~$\mu_L$.
\end{definition}

From now on, we consider only the case ofan $S$-arithmetic lattices~$\Gamma$ in an algebraic group over~$\Q_S$. In such case the previous class~$\Lscr$ is closely related with the following more explicit, and algebraically defined, one. 
\begin{definition}Let~$\RatQ$ be the class of algebraic subgroups~$\L$ of~$G$ defined over~$\Q$ such the following conditions.
\begin{enumerate}
\item The algebraic group~$\L$ is Zariski connected over~$\Q$.
\item The radical of~$\L$ is unipotent. Equivalently,~$\L$ admits a semi-simple Levi factor (instead of reductive). Equivalently,~$\L$ admits no character, absolutely.
\item The Levi factors of~$\L$ over~$\Q$ are of non compact type: none of their quasi factor defined over~$\Q$ is anisotropic over~$\Q_S$.
\end{enumerate}
\end{definition}
From Levi decomposition for connected linear algebraic groups, one easily check that the conjonction of these three conditions is actually equivalent to the following.
\begin{equation}
\text{The subgroup~$\L(\Q_S)^+$ is $\Q$-Zariski dense in~$\L$.}
\end{equation}

The relation between the two classes is the following. One passes from~$\Lscr$ to~$\RatQ$ by associating to~$L$ its Zariski closure (over~$\Q$) of a neighbourhood?. Conversely, one passes from~$\RatQ$ to~$\Lscr$ by the following construction. Write~$\Gamma_\L$ for~$\Gamma\cap\L(\Q_S)$, and write~$\L^+$ for~$\L(\Q_S)^+$. Then~$\L^+$ is a subgroup of~$\L(\Q_S)$ invariant under all algebraic automorphisms. It is then a normal subgroup, and is normalised by~$\Gamma_\L$. Hence~$\Gamma_\L\cdot\L^+$ is a group. We write~$\\Lpp$ for its closure. It is a group of class~$\Lscr$.

\begin{proposition} Let~$W$ be a subgroup of~$G$ such that~$W^+=W$. Then the Zariski closure of~$W$ over~$\Q$ is of class~$\Rat$.

Every group of class~$\Rat$ can be achieved in such a way.
\end{proposition}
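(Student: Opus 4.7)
The plan is to prove both halves directly from the definition~\eqref{defRatclass} of the Ratner class. The only piece of input needed is the elementary observation that if~$\mathbf{U}$ is a~$\Q_S$-algebraic subgroup of~$\G$ with~$\mathbf{U}(\Q_S)\subseteq L(\Q_S)$ for some~$\Q$-algebraic subgroup~$L$ of~$\G$, then~$\mathbf{U}\subseteq L\otimes_\Q\Q_S$; so every one-parameter~$\Q_S$-algebraic unipotent subgroup of~$G$ sitting inside~$L(\Q_S)$ is automatically one of the generators of~$L^+$.

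For the first statement, let~$L$ denote the~$\Q$-Zariski closure of~$W$ in~$\G$, which is by construction a~$\Q$-algebraic subgroup of~$\G$. I would first show $W\subseteq L^+$: the hypothesis~$W=W^+$ means that~$W$ is generated by subgroups of the form~$U=\mathbf{U}(\Q_S)$ with~$\mathbf{U}$ a one-parameter~$\Q_S$-algebraic unipotent subgroup of~$\G$; each such~$U$ is contained in~$L(\Q_S)$, hence in~$L^+$ by the observation above. Next, since~$L^+\subseteq L(\Q_S)$ its~$\Q$-Zariski closure is contained in~$L$; and since~$W\subseteq L^+$, this closure must contain the~$\Q$-Zariski closure of~$W$, which is~$L$ by definition. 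Equality follows, so~$L^+$ is~$\Q$-Zariski dense in~$L$ and~$L\in\Rat$.

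For the second statement, given~$L\in\Rat$, I would take~$W:=L^+$. By definition of~$\Rat$ the~$\Q$-Zariski closure of~$W$ equals~$L$, so it only remains to verify~$W^+=W$. The inclusion $W^+\subseteq W$ is immediate from the definition of~$W^+$. Conversely, each generator $U=\mathbf{U}(\Q_S)$ of~$L^+$ is a one-parameter~$\Q_S$-algebraic unipotent subgroup of~$G$ contained in $W=L^+$, and therefore contributes to~$W^+$; as these generate~$L^+=W$, we obtain $W\subseteq W^+$.

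The only point requiring attention is the bookkeeping between~$\Q$- and~$\Q_S$-Zariski closures underlying the preliminary observation, but this is routine from $L(\Q_S)=(L\otimes_\Q\Q_S)(\Q_S)$ and the definition of Zariski closure over a subring. No deeper structural input — in particular no appeal to the semisimplicity of the Levi factor or to Appendix~\ref{AppRatner} — is needed.
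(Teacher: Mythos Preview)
Your proof is correct and is precisely the direct verification from the definition~\eqref{defRatclass} that the paper has in mind; indeed the paper states this proposition without supplying a proof, treating it as immediate. Your bookkeeping observation that a one-parameter~$\Q_S$-algebraic unipotent subgroup~$U(\Q_S)$ contained in~$L(\Q_S)$ already counts among the generators of~$L^+$ is the only point worth making explicit, and you handle it cleanly.
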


\begin{proposition}\label{++finiteindex}
 The group~$\\Lpp$ is closed, open, Zariski dense and of finite index in~$\L(\Q_S)$; it is of class~$\Lscr$.
\end{proposition}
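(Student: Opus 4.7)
That $\Lpp$ is closed is immediate, $\Lpp$ being by definition a topological closure. For Zariski density, the characterisation~\eqref{defRatclass} of the Ratner class $\RatQ$ is precisely that $\L^+ = \L(\Q_S)^+$ is $\Q$-Zariski dense in $\L$; since $\L^+ \subseteq \Lpp$, the same density propagates to $\Lpp$.

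The core of the argument is to show that $\Lpp$ is open and of finite index in $\L(\Q_S) = \prod_{v \in S} \L(\Q_v)$, which I would carry out place by place. At each $v \in S$, a Levi decomposition writes $\L(\Q_v) = M_v \ltimes U_v$ with $M_v$ semisimple and $U_v$ the unipotent radical. In characteristic zero, $\exp$ identifies $U_v$ with its Lie algebra, so $U_v$ is filtered by one-parameter $\Q_v$-algebraic unipotent subgroups and $U_v \subseteq \L^+$ entirely. The semisimple $M_v$ decomposes into $\Q_v$-almost-simple factors; by Borel--Tits together with Platonov (\cite[{\S}3.1 Prop.~3.3 Cor.~1]{PR}), the $\Q_v$-isotropic factors contribute to $\L^+$ an open subgroup of finite index in their product.

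The delicate step, which I expect to be the main obstacle, concerns the $\Q_v$-anisotropic almost-simple factors of $M_v$: these are compact but met trivially by $\L^+$. Here the Ratner-class hypothesis is essential: each $\Q$-quasi-factor $\L_i$ of $\L$ is $\Q_{v^\ast}$-isotropic at some $v^\ast \in S$. Applying strong approximation on a simply connected cover of $\L_i$, with $v^\ast$ playing the role of the non-compact place, and descending through the resulting finite isogeny, one obtains that $\Gamma \cap \L_i(\Q_S)$ projects with dense image onto every compact factor $\L_i(\Q_v)$, $v \ne v^\ast$. Combined with the openness supplied by $\L^+$ in the isotropic and unipotent directions, this proves $\Lpp = \overline{\L^+ \cdot \Gamma_\L}$ is open in $\L(\Q_S)$; the index $[\L(\Q_S):\Lpp]$ is then finite, being a finite product over $v \in S$ of finite Borel--Tits/Platonov quotients on the isotropic side.

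Once $\Lpp$ is established as open of finite index, membership in $\Lscr$ follows cleanly. Since $\L$ is $\Q$-defined and $\Gamma$ is $S$-arithmetic, $\Gamma_\L := \Gamma \cap \L(\Q_S)$ is an $S$-arithmetic lattice in $\L(\Q_S)$, so $\Gamma_\L \cap \Lpp$ is a lattice in $\Lpp$. The orbit $\Lpp\Gamma/\Gamma \simeq \Lpp/(\Gamma_\L \cap \Lpp)$ is accordingly closed in $G/\Gamma$ and supports a unique $\Lpp$-invariant probability $\mu_{\Lpp}$, giving conditions 1a--1b. Ergodicity of $\mu_{\Lpp}$ under $\L^+$ (condition 2) follows from the Margulis--Tomanov classification (\cite[Theorem~2]{MargulisTomanov}): $\L^+$ is dense in $\Lpp$ and generated by $\Q_S$-algebraic unipotents, forcing the unique $\Lpp$-invariant probability to be $\L^+$-ergodic.
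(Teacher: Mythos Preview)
Your proof is essentially correct and follows the same skeleton as the paper's sketch: closedness is immediate, Zariski density comes from~$\L^+\subseteq\Lpp$, and the substantive work is showing openness and finite index, after which the~$\Lscr$ properties follow. The paper, like you, reduces through the unipotent radical (contained in~$\L^+$) to the semisimple---then adjoint, then $\Q$-simple---case, and handles the $\Q_v$-isotropic factors via Borel--Tits~\cite[Prop.~6.14]{BorelTits}.

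Where you genuinely diverge is on the $\Q_v$-anisotropic factors. You invoke strong approximation on a simply connected cover to get density of the projection of~$\Gamma_\L$. The paper instead argues directly from Borel--Wang Zariski density: the closure~$K$ of the projection of~$\Gamma_\L$ onto the product of anisotropic factors has a Lie algebra which is $\Ad(\Gamma_\L)$-invariant, hence (by Zariski density of~$\Gamma_\L$) an ideal, hence a sub-product of the simple factors; but the projection of~$\Gamma_\L$ on each individual anisotropic factor is non-discrete (again by Zariski density), so~$K$ meets every factor with positive dimension and must be open. Your route is heavier machinery but perfectly valid and arguably more transparent; the paper's is more elementary but requires the reader to unwind the ideal argument.

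One slip to correct: in your ergodicity argument you assert ``$\L^+$ is dense in~$\Lpp$'', which is generally false---only~$\L^+\cdot\Gamma_\L$ is dense in~$\Lpp$. The repair is short: since~$\L^+$ is normal in~$\Lpp$, an $\L^+$-invariant measurable function on~$\Lpp/\Gamma_\L$ descends to the compact group~$\Lpp/\L^+$ modulo the image of~$\Gamma_\L$, and the latter image is dense precisely because~$\Lpp=\overline{\L^+\cdot\Gamma_\L}$; hence the function is a.e.\ constant. (The paper, for its part, leaves the $\Lscr$ verification entirely to references, so you have actually supplied more than it does.)
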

Lets prove the first statement.
\begin{proposition} Let~$\L$ be a connected linear algebraic group over~$\Q$. Consider~$L=\L(\Q_S)$, and, for an arbitrary~$S$-arithmetic lattice~$\Gamma$ in~$L$,
\[
\Lpp=\overline{\Gamma_L\cdot L^+}.
\]
Assume~$\L$ is of non compact type and its radical is unipotent: equivalently
\begin{itemize}
\item that~$\Gamma_L$ is $\Q_S$-Zariski dense in~$L$; (Borel-Wang density~\cite{Wang})
\item that~$L^+$ is $\Q$-Zariski in~$\L$.
\end{itemize}

Then~$\Lpp$ is an open subgroup of finite index in~$L$.
\end{proposition}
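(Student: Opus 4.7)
The plan is to reduce the statement to the well-known structure-theoretic fact that, over a local field of characteristic zero, the subgroup generated by unipotent elements in a connected semisimple algebraic group of non-compact type is open and of finite index in the group of rational points (\cite[Proposition~6.14]{BorelTits}, or~\cite[Theorem~2.3.1]{Margulis}). Once $L^+$ itself is proved to be open and of finite index in $L$, everything else reduces to an elementary topological argument, because $L^+$ is already normal in $L$.

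First I would dispose of the unipotent radical. Let $U$ be the unipotent radical of $\L$, which is defined over $\Q$, and let $\overline{\L}=\L/U$; by assumption $\overline{\L}$ is a $\Q$-semisimple group of non-compact type. In characteristic zero the exponential map is a $\Q$-isomorphism of varieties $\lie{u}\xrightarrow{\sim}U$, so $U(\Q_S)$ is a connected (in the analytic/Lie sense) nilpotent group and $H^1(\Q_v,U)$ vanishes for every place $v\in S$. Hence the sequence
\begin{equation*}
1\to U(\Q_S)\to \L(\Q_S)\to \overline{\L}(\Q_S)\to 1
\end{equation*}
is exact, and the quotient map $\pi\colon L\to \overline{L}:=\overline{\L}(\Q_S)$ is open. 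Every element of $U(\Q_S)$ is unipotent, so $U(\Q_S)\subseteq L^+$; by Proposition~\ref{propoliftuni}\eqref{propplus2} one has ${\overline{L}}^{+}\subseteq \pi(L^+)$, and conversely $\pi(L^+)\subseteq {\overline{L}}^{+}$ trivially. Thus $L^+=\pi^{-1}({\overline{L}}^{+})$.

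Secondly I would invoke the cited structure theorem of Borel--Tits at each place in $S$: working factor-by-factor in $\overline{L}=\prod_{v\in S}\overline{\L}(\Q_v)$, each $\overline{\L}(\Q_v)^+$ is open and of finite index in $\overline{\L}(\Q_v)$, because $\overline{\L}$ is semisimple and has no $\Q_v$-anisotropic $\Q$-quasi-factor (the non-compact type hypothesis). Taking the product, ${\overline{L}}^{+}$ is open and of finite index in $\overline{L}$, and pulling back through the open map $\pi$ shows that $L^+$ is open and of finite index in $L$.

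It remains to pass from $L^+$ to $\Lpp$. Since $L^+$ is invariant under every $\Q_S$-algebraic automorphism of $\L$ (automorphisms preserve the Jordan decomposition and hence the set of unipotent elements), $L^+$ is normal in $L$. In particular $\Gamma_L$ normalises $L^+$, so $\Gamma_L\cdot L^+$ is a subgroup of $L$; since it contains the open subgroup $L^+$, it is itself open in $L$, hence closed (an open subgroup of a topological group is automatically closed). Therefore its topological closure equals itself,
\begin{equation*}
\Lpp=\overline{\Gamma_L\cdot L^+}=\Gamma_L\cdot L^+,
\end{equation*}
and this is an open subgroup of $L$ containing $L^+$, whose index in $L$ divides the finite index $[L:L^+]$. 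The only step of any depth is the appeal to Borel--Tits for the semisimple non-compact-type factor; the remainder is bookkeeping via the unipotent radical and elementary topological group theory.
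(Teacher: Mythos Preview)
Your reduction to the semisimple quotient $\overline{\L}=\L/U$ is fine, but the second step contains a genuine gap. You assert that for each place $v\in S$ the group $\overline{\L}(\Q_v)^+$ is open and of finite index in $\overline{\L}(\Q_v)$ ``because $\overline{\L}$ is semisimple and has no $\Q_v$-anisotropic $\Q$-quasi-factor (the non-compact type hypothesis)''. This misreads the hypothesis: non-compact type means no $\Q$-quasi-factor of $\overline{\L}$ is anisotropic over~$\Q_S$, i.e.\ anisotropic at \emph{every} place of~$S$ simultaneously. It does \emph{not} rule out a $\Q$-simple factor being anisotropic at one particular~$v$ while isotropic at another. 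Concretely, take $S=\{\infty,p\}$ and a $\Q$-simple $\overline{\L}$ that is $\R$-anisotropic but $\Q_p$-isotropic. Then $\overline{\L}$ is of non-compact type, yet $\overline{\L}(\R)$ is compact with no nontrivial unipotents, so $\overline{\L}(\R)^+=\{e\}$ and $\overline{L}^+=\{e\}\times\overline{\L}(\Q_p)^+$ has infinite index in $\overline{L}$. Your conclusion that $L^+$ alone is of finite index in $L$ therefore fails in general.

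This is exactly why $\Gamma_L$ enters the definition of $\Lpp$ and is not decoration. The paper's argument uses it essentially: after reducing to the $\Q$-simple adjoint case, Borel--Tits gives that $L^+$ is open of finite index only in the product of the \emph{isotropic} local factors; for the anisotropic (compact) local factors one shows instead that the closure of the projection of $\Gamma_L$ there is open of finite index, via Borel--Wang Zariski density (the Lie algebra of this closure is $\Gamma_L$-invariant, hence $L$-invariant, hence everything) together with compactness. Your final ``bookkeeping'' paragraph would need this input about $\Gamma_L$, not merely normality of $L^+$.
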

\begin{proof}[Sketch of proof] By construction~$\Lpp$ is closed. It is a subgroup of~$L$ as~$L^+$ is normal in~$L$. 
It suffices to prove the finiteness of the index. The openness will follow.

Without loss of generality, we can replace~$\Gamma_\L$ by a subgroup of finite index. 

Note that the radical~$R$ of~$\L$ is unipotent, hence is contained in~$L^+$. 
Consider the projection~$\L\to\L/R$. The image of~$\Gamma_\L$ is a~$S$-arithmetic subgroup of~$\L/R$. 
The induced map~$L\to L/R(\Q_S)$ is surjective (additive Hilbert~90 for a perfect field~\cite[II~Prop.1, III~Prop.6]{SerreLNM5}.)
We can reduce to the case~$\L$ is semisimple. And even adjoint. And even $\Q$-simple.

By~\cite[Proof of Proposition~6.14]{BorelTits},~$L^+$ is open and of finite index in the
isotropic $\Q_S$-factors of~$L$. It suffices to prove that the projection of~$\Gamma$ on the 
anisotropic $\Q_S$-factors has a closure which is a finite index subgroup~$K$. The Lie algebra
of this closure is invariant under~$\Gamma$, hence under~$L$ by Borel-Wang Zariski density.

It need to be a product of Lie algebras of some anisotropic factors.
But this projection is non trivial and non discrete on every anisotropic factor, again by Zariski density.

The Lie algebra of~$K$ is that of the product of the anisotropic factors.
Hence~$K$ is open in the product of the anisotropic factors. The latter is compact.
Hence~$K$ is also of finite index.
\end{proof}
Lets prove the second statement.
\begin{proof} Find references.

\end{proof}

\begin{proposition}[Kneser-Tits for characteristic~$0$ local fields, \cite{TitsBBK}{\cite[2.3.1 (d)]{Margulis}}] Let~$\H$ be an algebraic simply connected semi-simple group over~$\Q_S$ without~$\Q_S$-anisotropic quasi-factor.

 Then~$\H(\Q_S)=\H(\Q_S)^+$.
\end{proposition}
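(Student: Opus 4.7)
My plan is to reduce the statement to a place-by-place almost-simple case, where the Kneser-Tits theorem can be invoked directly.

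First, I would exploit $\Q_S = \prod_{v \in S}\Q_v$, which gives $\H(\Q_S) = \prod_{v \in S} \H(\Q_v)$. By the definition in~\S\ref{secnotations}, $\H(\Q_S)^+$ is generated by subgroups of the form $U(\Q_S) = \prod_{v \in S} U_v(\Q_v)$ with each $U_v$ algebraic unipotent of dimension at most one over $\Q_v$. Allowing $U_v$ to be trivial at all but one place, one checks that $\H(\Q_S)^+$ factorises as $\prod_{v \in S}\H(\Q_v)^+$. Hence it suffices to treat a single local field $k = \Q_v$ of characteristic zero.

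Next, because $\H$ is simply connected and semisimple, it decomposes over $k$ into a direct product $\H = \prod_i \H_i$ of almost $k$-simple simply connected factors, each of which is $k$-isotropic by the absence of a $\Q_S$-anisotropic quasi-factor. The $+$-construction respects this product decomposition, so I would be reduced to the case where $\H$ is furthermore almost $k$-simple and $k$-isotropic.

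In this reduced setting the conclusion is the classical Kneser-Tits theorem over $k$. For $k\simeq\mathbb{C}$, it is immediate: a simply connected semisimple complex group is generated by its root subgroups, which are all one-parameter unipotent. For $k\simeq\R$, I would use É.~Cartan's theorem that simple connectedness forces $\H(\R)$ to be topologically connected, and then invoke the Borel-Tits structure theorem for isotropic real semisimple Lie groups to conclude generation by unipotents. For $k$ a finite extension of $\Q_p$, this is the nontrivial Kneser-Tits conjecture for $p$-adic local fields, established by Platonov; I would simply cite~\cite[2.3.1 (d)]{Margulis} and~\cite{TitsBBK}.

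The hard part is exclusively this last nonarchimedean step: the two preceding reductions are formal, but the $p$-adic Kneser-Tits theorem is itself a substantial result which I would not attempt to reprove.
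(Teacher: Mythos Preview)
Your reduction is correct and is the standard way to derive the $\Q_S$-statement from the local one. Note, however, that the paper does not give a proof of this proposition at all: it is recorded as a known fact with references to \cite{TitsBBK} and \cite[2.3.1 (d)]{Margulis}, and is used as a black box. Your write-up therefore goes slightly beyond what the paper does, by spelling out the place-by-place and almost-simple-factor reductions before invoking the cited Kneser--Tits theorem; this is harmless and arguably clearer, but the substantive input (the $p$-adic case) remains, as you say, a citation rather than something reproved here.
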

%\begin{corollaire} If~$H$ is no more necessarily simply connected,~$\H(\Q_S)^+$ is open of finite index in~$\H(\Q_S)$.\end{corollaire}
\begin{proposition}[Borel Zariski density, $S$-arithmetic variant of {\cite{Wang}, \cite[{\S}I.3.2]{Margulis}}]\label{BorelWang} Let~$\H$ be an algebraic simply connected semi-simple group over~$\Q$ without~$\Q_S$-anisotropic $\Q$-quasi-factor.

Then any $S$-arithmetic subgroup is a Zariski dense lattice.
\end{proposition}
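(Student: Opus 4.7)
The plan is to establish the two assertions separately: first the lattice property, and then Zariski density.

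\textbf{Lattice property.} This is the $S$-arithmetic Borel--Harish-Chandra theorem. Because $\H$ is semisimple and simply connected, it admits no nontrivial $\Q$-character; it then follows from reduction theory in the $S$-arithmetic setting (Borel--Prasad, Behr, see also \cite[Theorem~5.7]{Borel-Prasad}) that the $S$-integral subgroup $\H(\Z[S^{-1}])$ is of finite covolume in $\H(\Q_S)$. Any $S$-arithmetic subgroup $\Gamma$ is by definition commensurable with $\H(\Z[S^{-1}])$, hence is itself of finite covolume, hence a lattice.

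\textbf{Zariski density.} The plan is to imitate Borel's original argument, as transposed to the~$S$-adic setting in \cite[{\S}I.3.2]{Margulis}. Let $\bar{\Gamma}$ be the $\Q$-Zariski closure of $\Gamma$ in $\H$; we wish to show $\bar{\Gamma}=\H$. By Chevalley's theorem applied to the $\Q$-group $\bar{\Gamma}\subseteq\H$, there is a finite-dimensional $\Q$-linear representation $\rho\colon\H\to GL(V)$ together with a $\Q$-line $\ell\in\Pbb(V)(\Q)$ such that $\bar{\Gamma}=\Stab_{\H}(\ell)$. The orbit map induces an $\H$-equivariant immersion $\H/\bar{\Gamma}\hookrightarrow\Pbb(V)$ defined over $\Q$. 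Passing to $\Q_S$-points and composing with the projection $\H(\Q_S)/\Gamma\twoheadrightarrow\H(\Q_S)/\bar{\Gamma}(\Q_S)$, we obtain a continuous $\H(\Q_S)$-equivariant map
\[
\pi\colon\H(\Q_S)/\Gamma\longrightarrow \Pbb(V)(\Q_S),\quad g\Gamma\mapsto g\cdot \ell,
\]
whose image lies in the orbit $\H(\Q_S)\cdot\ell$. Push the $\H(\Q_S)$-invariant probability measure $\mu$ on $\H(\Q_S)/\Gamma$ forward through $\pi$: this yields an $\H(\Q_S)$-invariant probability measure $\nu=\pi_*\mu$ on $\Pbb(V)(\Q_S)$, supported on the orbit of $\ell$.

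\textbf{Furstenberg-type rigidity.} The next step is an $S$-adic analogue of Furstenberg's lemma: if $\H$ is a connected $\Q$-semisimple group with no $\Q_S$-anisotropic $\Q$-quasi-factor, then any $\H(\Q_S)$-invariant probability measure on a $\Q_S$-projective variety is concentrated on the $\H(\Q_S)$-fixed points. This is established factor by factor: by the hypothesis, each $\Q$-simple factor of $\H$ is isotropic at some place $v\in S$, which provides $\Q_v$-split one-parameter subgroups. For such a one-parameter subgroup $a^t$, classical Mautner/contraction dynamics (a diagonalisable element in an isotropic simple group over a local field contracts horospherically) forces any $a^t$-invariant probability measure on $\Pbb(V)(\Q_v)$ to be supported on the points fixed by its unipotent horospherical subgroup. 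Summing the contributions from all $\Q$-quasi-factors, one obtains that $\nu$ is supported on the $\H(\Q_S)$-fixed points of $\Pbb(V)(\Q_S)$.

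\textbf{Conclusion.} Because $\nu$ is supported on the orbit $\H(\Q_S)\cdot\ell$, the existence of an $\H(\Q_S)$-fixed point inside this orbit forces $\H(\Q_S)$ to fix $\ell$. Indeed, if $g\cdot\ell$ is fixed by $\H(\Q_S)$, then $g^{-1}\H(\Q_S)g\subseteq\Stab(\ell)=\bar{\Gamma}(\Q_S)$, and since $g^{-1}\H(\Q_S)g=\H(\Q_S)$, this gives $\H(\Q_S)\subseteq\bar{\Gamma}(\Q_S)$. Taking Zariski closures over $\Q$ yields $\H\subseteq\bar{\Gamma}$, so $\bar{\Gamma}=\H$ as required.

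\textbf{Main obstacle.} The serious point is the $S$-adic Furstenberg rigidity statement, which requires verifying the factor-by-factor contraction argument in mixed characteristic and genuinely uses the hypothesis that no $\Q$-quasi-factor is $\Q_S$-anisotropic (an anisotropic factor would be compact at every $v\in S$ and could preserve an arbitrary probability measure). All the rest of the argument is a formal transposition of Borel's proof. We rely here on the treatment of \cite[{\S}I.3.2, Corollary~3.2.10 and Theorem~3.2.5]{Margulis}.
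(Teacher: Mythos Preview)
The paper does not supply its own proof of this proposition; it is stated as a citation result, with attribution to \cite{Wang} and \cite[{\S}I.3.2]{Margulis}. Your argument is therefore not being compared against an in-paper proof but against the references themselves, and your approach---reduction theory for the lattice property, then a Furstenberg-type argument (push forward the Haar probability to a projective variety via Chevalley, invoke measure rigidity for groups with no compact factor) for Zariski density---is exactly the strategy of \cite[{\S}I.3.2]{Margulis}, which is what the paper points to. So your proposal is correct and aligned with the intended reference.

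One small point worth tightening in your Conclusion step: from $\H(\Q_S)\subseteq\bar{\Gamma}(\Q_S)$ you pass to $\H\subseteq\bar{\Gamma}$ by ``taking Zariski closures over~$\Q$''. This uses that $\H(\Q_S)$ is itself $\Q$-Zariski dense in $\H$. That is true here---each $\Q$-quasi-factor is $\Q_v$-isotropic for some $v\in S$, so by the Kneser--Tits statement immediately preceding this proposition in the paper, $\H(\Q_S)^+=\H(\Q_S)$ is generated by unipotents and is Zariski dense in each factor---but it deserves a sentence, since without the no-anisotropic-factor hypothesis it can fail.
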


Let's define the Ratner class as lattice + unipotent acts ergodically. Arithmetic case implies open in zariski closure, and even finite index (depending on Gamma for anisotropic factors). (Kneser tits simplement connexe? facteur localement anisotropes: image de Gamma)

%\subsection{Complement to~{\cite[Theorem~2]{MargulisTomanov} DONE IN~\cite{TomanovOrbits}}}
%%TODO Expertiser les besoin et l’énoncé
%Here we state and derive a more explicit variant of~\cite[Theorem~2]{MargulisTomanov}. 
%\begin{theorem} Let~$W$ be a subgroup of~$G$ such that~$W^+=W$.
%
% And let~$\mu$ be a locally finite $W$-invariant measure on~$G/\Gamma$.
% 
% Then~$\mu$ is finite, and can be written as a countable sum
% $$ .$$
% Where~$\mu^{[L]}$ gather the~$W$-ergodic components of~$W$ which are translates of~$\mu_{\Lpp}$.
% 
% Moreover,~$\mu^{[L]}$ is the restriction~$\mu|_{X^*(L,W)}$ where ...
% 
% Finally~$X^*(L,W)$ is the subset ...
%  
%
%\end{theorem}
%\input{finarticle.tex}

\bibliographystyle{smfart/smfalpha}
\bibliography{bib}

\end{document}